\newtheorem{thm}{Theorem}[section]
\newtheorem{cor}[thm]{Corollary}
\newtheorem{lem}[thm]{Lemma}
\newtheorem{prop}[thm]{Proposition}
\newtheorem{defi}[thm]{Definition}
\newtheorem{ejm}[thm]{Example}
\newtheorem{obs}[thm]{Observation}
\newtheorem{rem}[thm]{Remark}
\newcommand{\set}[1]{\left\{#1\right\}} 
\newcommand{\N}{\mathbb N}
\newcommand{\R}{\mathbb R}
\newcommand{\X}{\mathbb X}
\newcommand{\Y}{\mathbb Y}
\newcommand{\Z}{\mathbb Z}
\newcommand{\ds}{\displaystyle}
\journal{Journal of Mathematical Analysis and Applications}
\begin{document}

\begin{frontmatter}


 \title{Vector valued piecewise continuous almost automorphic functions and some consequences}


\author[1]{Alan Ch\'avez \corref{cor1}%
}
\ead{ajchavez@unitru.edu.pe}
\author[2]{Lenin Qui\~nones Huatangari}
\ead{lenin.quinones@unj.edu.pe}
\cortext[cor1]{Corresponding author}

\affiliation[1]{organization={OASIS \& GRACOCC research groups, Instituto de Investigaci\'on en Matem\'aticas \& Departamento de Matem\'aticas-FCFYM-  Universidad Nacional de Trujillo},
addressline={Av. Juan Pablo II S/N - Urbanizaci\'on San Andr\'es},
postcode={13011},
city={Trujillo},
country={Per\'u}}

\affiliation[2]{organization={Instituto de Ciencia de Datos, Universidad Nacional de Jaen},
city={Cajamarca},
country={Per\'u}}

%


%





\begin{abstract}
In the present work, for $\mathbb{X}$ a Banach space, the notion of piecewise continuous $\mathbb{Z}$-almost automorphic functions with values in finite dimensional spaces is extended to piecewise continuous $\mathbb{Z}$-almost automorphic functions with values in $\mathbb{X}$. Several properties of this class of functions are provided, in particular it is shown that if $\mathbb{X}$ is a Banach algebra, then this class of functions constitute also a Banach algebra; furthermore, using the theory of $\mathbb{Z}$-almost automorphic functions, a new characterization of compact almost automorphic functions is given. As consequences, with the help of $\mathbb{Z}$-almost automorphic functions, it is presented a simple proof of the characterization of almost automorphic sequences by compact almost automorphic functions; the method permits us to give explicit examples of compact almost automorphic functions which are not almost periodic. Also, using the theory developed here, it is shown that  almost automorphic solutions of differential equations with piecewise constant argument are in fact compact almost automorphic. Finally, it is proved that the classical solution of the $1D$ heat equation with continuous $\mathbb{Z}$-almost automorphic source is also continuous $\mathbb{Z}$-almost automorphic; furthermore, we comment applications to the existence and uniqueness of the asymptotically continuous $\mathbb{Z}$-almost automorphic mild solution to abstract integro-differential equations with nonlocal initial conditions.
\end{abstract}



\begin{keyword}
almost automorphic functions, compact almost automorphic functions, piecewise continuous almost automorphic functions, asymptotically piecewise continuous almost automorphic functions, differential equation with piecewise constant argument.



\end{keyword}

\end{frontmatter}

\section{Introduction}\label{sec:Introduction}

Continuous almost automorphic functions were introduced by the Mathematician S. Bochner in his work \cite{03}, subsequently the mathematical community were interested in the structural properties of this function space. For some further developments, see for example the works \cite{04,05,18,veech1965almost,milnes1977almost,terras1970almost,
berger2004almost,dai2020almost,hric2014construction} and references cited therein.

Compact almost automorphic functions were studied by the first time by the mathematician Phillip R. Bender in his Ph.D. Thesis \cite{bender1966some}, after that A. M.  Fink in his works \cite{fink1968almost,fink1969extensions} studied existence of compact almost automorphic solutions to differential equations and also studied the extension problem of discrete almost automorphic functions to continuous almost automorphic ones. Further applications of almost automorphic functions in differential and integral equations can be found in  \cite{zaki1974almost,de2012asymptotic,es2016almost,es2017almost,es2020compact,cheban2020periodic,campos2020barycentric,chavez2021almost,choquehuanca2023almost,dads2023massera} and references cited therein.

The notion of piecewise continuous almost periodic/automorphic functions is intrinsically connected with the study of differential equations with piecewise constant argument (DEPCA in short) with almost periodic/automorphic forcing terms \cite{chavez2014discontinuous,chavez2014discon,dads2010pseudo}. A DEPCA, is a differential equation with discontinuous argument of the following type:
	$$y'(t)=g(t,y(t),y([t]))\, ,$$
where $[\cdot]$ is the integer part function. The study of DEPCA was initiated in 1983 with S. M. Shah and J. Wiener's work \cite{busenberg1982models}, in this interesting paper they study DEPCAs as models of "vertically transmitted diseases which are propagated by invertebrate vectors with discrete generations", thus DEPCAs are of importance in understanding diseases dynamics, see also the more recent work \cite{kashkynbayev2020global}. Subsequently, the qualitative theory of DEPCA has been developed intensively, see for instance the works  \cite{cooke1984retarded,aftabizadeh1987oscillatory,cooke2006survey, chiu2019oscillatory,shen2000oscillatory,akhmet2011method} and references therein.



%
%
	Almost periodic type solutions for DEPCA have been  studied, among other works, in  \cite{kupper2002quasi,alonso2000almost,rong1997existence,
	feng2020asymptotically,xia2007existence,dads2010pseudo}. In the almost automorphic setting, it was studied for example in  \cite{van2007almost,dimbour2011almost,
	chavez2014discontinuous,chavez2014discon},
	 see also \cite{ding2017asymptotically} for the asymptotically almost automorphic solutions.

Note that, if $f$ is almost automorphic, then the piecewise continuous function $f([t])$ is not almost automorphic (it is not continuous) but it fits similar properties with $f$ as it was explained in \cite{chavez2014discontinuous}. Since the study of DEPCA with coefficients of the form $f([t])$ for $f$ almost automorphic is different from the classical study of differential equations, it was the main idea in \cite{chavez2014discontinuous,chavez2014discon} to developed the theory of $\mathbb{Z}$-almost automorphic functions with values in the euclidean space in order to study the existence and uniqueness of the almost automorphic solution of the following DEPCAs 
\begin{equation}\label{DEPCA1}
y'(t)=A(t)y(t)+B(t)y([t])+f(t,y(t), y([t]))\, ,
\end{equation}
\begin{equation}\label{DEPCALin1}
     	y'(t)=A(t)y(t)+B(t)y([t])+f(t)\, .
     \end{equation}
The theory of $\mathbb{Z}$-almost automorphic functions (with values in Banach spaces) was exploited in \cite{ding2017asymptotically} to study asymptotically almost automorphic solutions of DEPCAs (\ref{DEPCALin1}) and (\ref{DEPCA1}). Until now, the interesting applications of $\mathbb{Z}$-almost automorphic functions were presented in \cite{chavez2014discontinuous,chavez2014discon,ding2017asymptotically} 
in the context of DEPCAs. The possibility of providing new applications of the $\mathbb{Z}$-almost automorphic functions strongly motivates the present work.

%
%

	In this paper there are two fundamental goals, described as follows:
\begin{itemize}
\item Firstly, for $\mathbb{X}$ a Banach space, we will develop the basic theory of vector valued  $\mathbb{Z}$-almost automorphic functions (and also its asymptotic version), denoted by $\mathbb{Z}AA(\mathbb{R};\mathbb{X})$ ($\mathbb{Z}AA(\mathbb{R}^+;\mathbb{X})$ respectively). It is proved  that, elements in the introduced spaces are always bounded (this is a crucial difference with the works \cite{chavez2014discontinuous,
chavez2014discon,ding2017asymptotically}, where boundedness is part of the definition). Also, it is shown that they are Banach algebras as long as $\mathbb{X}$ is, also a new characterization of compact almost automorphic functions is proposed  (Theorem \ref{charCaa}). Furthermore, composition results and invariance under convolution products of these functional  spaces are presented (section \ref{discontAAf}).
\item Secondly, provide some consequences (and applications) of the functional space $\mathbb{Z}AA(\mathbb{R};\mathbb{X})$.
In this part, using the theory of  $\mathbb{Z}$-almost automorphic functions, a simple proof of the extension Theorem for discrete almost automorphic functions  is presented (Theorem \ref{lastth}), also it is constructed explicit examples of compact almost automorphic functions which are not almost periodic, it is proved that almost automorphic solutions of DEPCA are in fact compact almost automorphic and finally it is studied the $1D$ heat equation in the space $\mathbb{Z}AA(\mathbb{R};\mathbb{R})\cap C(\mathbb{R};\mathbb{R})$. All this happens in section \ref{consequences}.
\end{itemize}
Let us mention that, in \cite{qi2022piecewise} Li and Yuan  have studied piecewise continuous almost automorphic functions but with different interests. In what follows, discontinuous almost automorphic functions mean piecewise continuous almost automorphic functions.

Now, we briefly summarize the organization of the paper. In section \ref{contaaclass} the definition of continuous and discrete almost automorphic functions and some of their basic properties are recalled, in section \ref{discontAAf} it is analysed the theory of $\Z$-almost automorphic functions with values on a general Banach space, and some of its main properties are studied. Finally, in section \ref{consequences} some consequences (and applications) of $\Z$-almost automorphic functions are presented.

\section{Continuous and discrete almost automorphic functions} \label{contaaclass}
In this section, we collect some basic facts about continuous and discrete almost automorphic functions, which will be of interest in subsequent sections.

In what follows, $\X$ and $\Y$ always represent (real or complex) Banach spaces, while $\Z$ represents the group of integer numbers and $\mathbb{R}^+=[0,\infty)$. Also, $B(\mathbb{R};\mathbb{X})$ represents the space of bounded functions from $\mathbb{R}$ to $\mathbb{X}$, and $UC(\mathbb{R};\mathbb{X})$ represents the space of uniformly continuous functions from $\mathbb{R}$ to $\mathbb{X}$, also $\mathbb{F} \in \{\mathbb{R}, \mathbb{C} \}$.

\begin{defi}\label{defAA}
 A continuous function $f:\R \to \X$, is  almost
automorphic if, for every sequence of real numbers $\{s'_n\}$ there
exists a subsequence $\set{s_n}\subseteq \set{s'_n}$ and a function
$\tilde{f}$ such that the following pointwise limits hold:
\begin{equation}\label{Defaa}
\lim_{n\to + \infty}f(t+s_n)=\tilde{f}(t),\quad \lim_{n\to +
\infty}\tilde{f}(t-s_n)=f(t)\, .
\end{equation}
\end{defi}

If, instead of the limits in (\ref{Defaa}), we consider the following limits:
\begin{equation}\label{Defcaa}
\lim_{n\to + \infty}\sup_{t\in K}|| f(t+s_n)-\tilde{f}(t)||=0,\quad \lim_{n\to +
\infty}\sup_{t\in K} ||\tilde{f}(t-s_n)-f(t)||=0\, ,
\end{equation}
for any compact set $K \subset \mathbb{R}$, then $f$ is called {\sl compact almost automorphic}. Also, if the limit $\lim_{n\to + \infty}f(t+s_n)=\tilde{f}(t)$ in   (\ref{Defaa}) is uniform in $\mathbb{R}$, then $f$ is called almost periodic. In this work, we will refer mostly to almost automorphic functions.

We denote the classes of almost automorphic and compact almost automorphic functions by $ AA(\R,\X)$ and $\mathcal{K}AA(\R,\X)$ respectively. 


The following is a useful characterization of compact almost automorphic functions on the real line, see \cite{es2016almost}. For the multidimensional euclidean space, see \cite{CHAKPPINTO2023}.

\begin{thm}\label{CharCAA} 
A function $f:\mathbb{R}\to \mathbb{X}$ is compact almost automorphic if and only if it is almost automorphic and uniformly continuous, thus:
$$\mathcal{K}AA(\mathbb{R}; \mathbb{X})= AA(\mathbb{R}; \mathbb{X})  \cap UC(\mathbb{R}; \mathbb{X})\, .$$
\end{thm}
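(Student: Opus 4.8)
The plan is to prove the set equality by establishing the two inclusions $\mathcal{K}AA(\mathbb{R};\mathbb{X}) \subseteq AA(\mathbb{R};\mathbb{X}) \cap UC(\mathbb{R};\mathbb{X})$ and $AA(\mathbb{R};\mathbb{X}) \cap UC(\mathbb{R};\mathbb{X}) \subseteq \mathcal{K}AA(\mathbb{R};\mathbb{X})$ separately. For the forward inclusion, note first that convergence uniform on compact sets implies pointwise convergence, so a compact almost automorphic function is immediately almost automorphic. The real content is to show that such an $f$ is uniformly continuous, and I would argue by contradiction: if $f$ fails to be uniformly continuous, there exist $\eps_0>0$ and sequences $(u_n),(v_n)$ with $\sigma_n:=u_n-v_n\to 0$ yet $\norm{f(u_n)-f(v_n)}\geq \eps_0$ for all $n$. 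Applying the compact almost automorphy of $f$ to the sequence $\{v_n\}$ produces a subsequence (relabelled) and a limit $\tilde f$ with $f(\cdot+v_n)\to\tilde f$ uniformly on the compact interval $K=[-1,1]$, which contains $\sigma_n$ for all large $n$.

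The crucial observation at this stage is that $\tilde f$ is \emph{continuous}: it is a locally uniform limit of the continuous translates $f(\cdot+v_n)$, and a locally uniform limit of continuous functions is continuous. Writing $g_n=f(\cdot+v_n)$ and inserting $\tilde f(\sigma_n)$ and $\tilde f(0)$ through the triangle inequality, the two approximation terms $\norm{g_n(\sigma_n)-\tilde f(\sigma_n)}$ and $\norm{g_n(0)-\tilde f(0)}$ vanish by the uniform convergence on $K$, while $\norm{\tilde f(\sigma_n)-\tilde f(0)}$ vanishes by continuity of $\tilde f$ at $0$ together with $\sigma_n\to 0$. This forces $\norm{f(u_n)-f(v_n)}=\norm{g_n(\sigma_n)-g_n(0)}\to 0$, contradicting $\norm{f(u_n)-f(v_n)}\geq \eps_0$; hence $f$ is uniformly continuous. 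I expect this continuity-of-$\tilde f$ step to be the main obstacle, since for a function that is merely almost automorphic the limit $\tilde f$ need not be continuous at all; it is precisely the compactness (local uniformity) built into the definition of compact almost automorphy that rescues it.

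For the reverse inclusion, take $f\in AA(\mathbb{R};\mathbb{X})\cap UC(\mathbb{R};\mathbb{X})$ and an arbitrary real sequence $\{s'_n\}$; almost automorphy yields a subsequence $\{s_n\}$ and a function $\tilde f$ satisfying the pointwise limits (\ref{Defaa}), and the goal is to upgrade these to the locally uniform limits (\ref{Defcaa}). Here uniform continuity of $f$ makes the family of translates $\{f(\cdot+s_n)\}$ equicontinuous with a common modulus of continuity, a modulus that $\tilde f$ inherits as a pointwise limit; consequently $\tilde f$ is itself uniformly continuous and the family $\{\tilde f(\cdot-s_n)\}$ is likewise equicontinuous. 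On any compact set $K$ I would then run the standard $\eps/3$ argument: cover $K$ by finitely many intervals shorter than the equicontinuity modulus corresponding to $\eps$, control the values at their centers using the pointwise convergence, and combine the three estimates. This converts pointwise convergence into convergence uniform on $K$ for both limits in (\ref{Defaa}) simultaneously, which is exactly (\ref{Defcaa}); therefore $f\in\mathcal{K}AA(\mathbb{R};\mathbb{X})$, completing the proof.
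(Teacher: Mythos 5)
Your proposal is correct, but there is no internal proof to compare it against: the paper states Theorem \ref{CharCAA} without proof, citing \cite{es2016almost} (and \cite{CHAKPPINTO2023} for the multidimensional case), so what you have written supplies a self-contained argument for a result the paper imports. Both halves of your proof are sound. In the forward inclusion, the step you flag as crucial is indeed the whole point: $\tilde f$ is continuous because it is a locally uniform limit of the continuous translates $f(\cdot+v_n)$, and this is precisely what fails for a merely almost automorphic function; with that, the three-term estimate $\Vert g_n(\sigma_n)-\tilde f(\sigma_n)\Vert + \Vert \tilde f(\sigma_n)-\tilde f(0)\Vert + \Vert \tilde f(0)-g_n(0)\Vert$ forces the contradiction (you correctly pass to the subsequence first, so the inequality $\Vert f(u_n)-f(v_n)\Vert\geq\varepsilon_0$ survives the relabelling). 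In the reverse inclusion, the observation that $\tilde f$ inherits the modulus of continuity of $f$ as a pointwise limit, so that both families $\{f(\cdot+s_n)\}$ and $\{\tilde f(\cdot-s_n)\}$ are equicontinuous with a common modulus, followed by the finite-cover $\varepsilon/3$ upgrade from pointwise to uniform convergence on compacta, is the standard and complete argument, valid in any Banach space. It is worth noting that your reverse inclusion is close in spirit to the paper's own proof of Theorem \ref{charCaa}, where uniform continuity is likewise used to transfer pointwise limits along integer translates (after decomposing $s_n=t_n+\xi_n$); but that proof uses Theorem \ref{CharCAA} as a black box, whereas your argument proves it directly, so adjoining your proof would make the paper's chain of characterizations self-contained.
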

In the following theorem, we summarize some properties of almost automorphic functions. For the proofs, see, for instance \cite{diaganaBook2013almost,16}.


\begin{thm}
Let $f,g \in AA(\R;\X)$, then
\begin{enumerate}
\item For every $\alpha \in \R$, $f+\alpha g \in AA(\R;\X)$.
\item $AA(\R;\X)$ is a Banach space under the norm of uniform convergence and $\mathcal{K} AA(\R;\X)$ is a closed subspace of $AA(\R;\X)$.
\item $f$ is bounded and if $\tilde{f}$ is the function in Definition $\ref{defAA}$, then
$$||f||_{\infty}= ||\tilde{f}||_{\infty}.$$
\item The range of $f$; i.e. $\mathcal{R}_f=\{f(t)\; :\; t\in \mathbb{R}\}$, is relatively compact in $\X$.
\end{enumerate}
\end{thm}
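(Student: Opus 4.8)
The plan is to treat the four items in turn, noting that they are largely independent except that item 3 will be reused inside the proof of item 2. For item 1, I would start from an arbitrary sequence $\{s'_n\}$ of reals. First I apply the almost automorphy of $f$ to obtain a subsequence along which $f(\cdot+s_n)$ converges pointwise to some $\tilde f$ and $\tilde f(\cdot-s_n)\to f$; then, working \emph{inside} this subsequence, I apply the almost automorphy of $g$ to extract a further subsequence producing $\tilde g$. Along this common subsequence both pairs of limits survive, and by linearity of pointwise limits $(f+\alpha g)(t+s_n)\to\tilde f(t)+\alpha\tilde g(t)$ together with the corresponding backward limit; the only technique required is the successive (nested) extraction of subsequences so that the limits for $f$ and for $g$ hold simultaneously.

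For item 3, boundedness follows by contradiction: if $f$ were unbounded I would choose $\{t_n\}$ with $\norm{f(t_n)}\to\infty$ and feed it as the test sequence $\{s'_n\}$; the resulting subsequence $\{s_n\}$ satisfies $f(s_n)=f(0+s_n)\to\tilde f(0)$, so $\norm{f(s_n)}$ must stay bounded, contradicting $\norm{f(s_n)}\to\infty$. The equality $\norm{f}_\infty=\norm{\tilde f}_\infty$ then drops out of the two limits in Definition \ref{defAA}: passing $\norm{\cdot}$ through $f(t+s_n)\to\tilde f(t)$ gives $\norm{\tilde f}_\infty\le\norm{f}_\infty$, and passing it through $\tilde f(t-s_n)\to f(t)$ gives the reverse inequality. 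Item 4 uses the same evaluation-at-zero device: given a sequence $\{f(t_n)\}$ in $\mathcal R_f$, applying almost automorphy to $\{t_n\}$ yields a subsequence with $f(s_n)\to\tilde f(0)$, so every sequence in $\mathcal R_f$ has a convergent subsequence and hence $\mathcal R_f$ is relatively compact.

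Item 2 is the substantive part, and I expect its completeness assertion to be the main obstacle. The vector-space structure is immediate from (the proof of) item 1. For completeness, take a Cauchy sequence $\{f_k\}\subset AA(\R;\X)$ in the sup norm; since the bounded continuous functions into the Banach space $\X$ are complete, $f_k\to f$ uniformly with $f$ bounded and continuous, and it remains to check $f\in AA(\R;\X)$. Here I would fix a test sequence $\{s'_n\}$ and run a diagonal extraction so that a single subsequence $\{s_n\}$ serves every $f_k$ at once, producing limit functions $\tilde f_k$. Applying item 3 to the differences $f_k-f_j$ (whose associated limit function is $\tilde f_k-\tilde f_j$) gives $\norm{\tilde f_k-\tilde f_j}_\infty=\norm{f_k-f_j}_\infty$, so $\{\tilde f_k\}$ is Cauchy and converges uniformly to some $\tilde f$. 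A standard $3\eps$ estimate
\[
\norm{f(t+s_n)-\tilde f(t)}\le \norm{f-f_k}_\infty+\norm{f_k(t+s_n)-\tilde f_k(t)}+\norm{\tilde f_k-\tilde f}_\infty
\]
then yields the forward limit (take $k$ large to kill the outer terms, then $n$ large for the middle one), and the analogous estimate handles the backward limit, so $f\in AA(\R;\X)$. Finally, for the closedness of $\mathcal KAA(\R;\X)$ I would invoke the characterization $\mathcal KAA(\R;\X)=AA(\R;\X)\cap UC(\R;\X)$ of Theorem \ref{CharCAA}: $AA(\R;\X)$ is closed by the completeness just established and $UC(\R;\X)$ is closed under uniform limits, so their intersection is a closed subspace. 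The delicate point throughout is the diagonalization forcing one subsequence to work simultaneously for the entire Cauchy sequence, together with the bookkeeping that the uniform limit of the $\tilde f_k$ is precisely the companion function of $f$.
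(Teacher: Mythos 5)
Your proof is correct, but there is nothing internal to compare it against: the paper does not prove this theorem, stating it as a summary of known facts and delegating the proofs to the cited monographs \cite{diaganaBook2013almost,16}. Your arguments are the standard ones from that literature, and they also coincide with the techniques the paper itself uses for the analogous piecewise-continuous results: the evaluation-at-zero arguments for boundedness and relative compactness of the range, and the norm identity obtained by passing the norm through the two pointwise limits, are the continuous-case counterparts of the proof of Theorem \ref{finalthm}; your diagonal extraction producing companion functions $\tilde f_k$, the Cauchy estimate $\Vert \tilde f_k-\tilde f_j\Vert_\infty\leq\Vert f_k-f_j\Vert_\infty$, and the $3\varepsilon$ argument for completeness are exactly the scheme of the paper's proof of Theorem \ref{BanachZAA}. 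Two remarks. First, your simple unboundedness contradiction works here precisely because test sequences in $AA(\R;\X)$ may be arbitrary real numbers; in the $\Z AA$ setting, where sequences must be integers, that shortcut is unavailable and the paper needs the substantially more delicate Lemma \ref{jump} and the decomposition $s_n=t_n+\xi_n$. Second, your derivation of the closedness of $\mathcal{K}AA(\R;\X)$ from Theorem \ref{CharCAA} together with the stability of $UC(\R;\X)$ under uniform limits is legitimate and clean, and involves no circularity, since Theorem \ref{CharCAA} is itself quoted from the literature rather than derived from the statement you are proving.
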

We remark that, $\mathcal{K} AA(\R;\X)$ and $AA(\R;\X)$ are also unital Banach algebras as long as $\X$ is, see  \cite{CHAKPPINTO2023}.
\begin{ejm}
The classical example of an almost automorphic function which is not almost periodic is the following one: $\psi: \R \to \R$, defined by 
$$\psi(t)=\sin \left( \frac{1}{2+\cos(t)+\cos(\sqrt{2} t)} \right) \; .$$
\end{ejm}
\begin{defi}\label{def2}
 A function $f\in BC(\R\times\X;\Y)$ is said to be almost automorphic (resp. compact almost automorphic) on bounded subsets of $\X$ if, given any bounded subset $\mathcal{B}$ of $\X$ and a sequence $\{s_n'\}$ of real numbers, there exists a subsequence 
$\{s_n\}\subseteq\{s_n'\}$ and a function $\tilde f$, such that the following limits hold:
$$\lim_{n\to\infty}f(t+s_n,x)=\tilde f(t,x)\; ,$$
$$ \lim_{n\to\infty}\tilde f(t-s_n,x)=f(t,x)\; ,$$
where the limits are pointwise in $t\in\R\, $ (resp. uniform for $t$ in compact subsets of $\mathbb{R}$) and uniformly for $x$ in $\mathcal{B}$.
\end{defi}
We denote this class of functions by $AA(\R \times \X, \Y)$ (resp. by $\mathcal{K}AA(\R \times \X, \Y)$ for the compact almost automorphic functions).

Now, Let us define the following function spaces 
$$C_0(\R^+;\X)=\Big{\{}\phi \in C(\R^+;\X): \lim_{t\to +\infty}||\phi(t)||=0 \Big{\}}\; ,$$
$$
C_0(\R^+\times \X;\Y)=\Big{\{} \phi \in C(\R^+\times\X;\Y):\lim_{t\to +\infty}\sup_{x\in \mathcal{B}}||\phi(t,x)||=0,\, \mathcal{B}\,   {\rm a\,  bounded\ 
subset\ of}\ \X  \Big{\}}\; .$$

\begin{defi}\label{DefAAA} A continuous function $f:\R^+\to\X$ (respectively $f:\R^+\times\X\to\Y$) is asymptotically almost
automorphic (respectively asymptotically almost automorphic in $t\in\R$, uniformly on bounded subsets of $\X$) if, 
$f=g+h$, where $g \in AA(\R;\X)$ (respectively $g \in AA(\R\times\X;\Y)$) and $h \in C_0(\R^+;\X) $
(respectively $h \in C_0(\R^+\times \X;\Y)$).
\end{defi}
In definition \ref{DefAAA}, if $g\in \mathcal{K} AA(\R;\X)$ (resp. $g\in \mathcal{K} AA(\R\times \X; \Y)$), then $f$ is called asymptotically compact almost automorphic (resp. asymptotically compact almost automorphic in $t\in\R$, uniformly on bounded subsets of $\X$). 

%
%
We denote by $AAA(\R^+;\X)$ the space of asymptotically almost automorphic functions and by $AAA(\R^+\times\X;\Y)$ the space of functions which are asymptotically almost automorphic in $t\in \R$ uniformly on bounded subsets of $\X$. Also, we denote by $\mathcal{K}AAA(\R^+;\X)$ the space of asymptotically compact almost automorphic functions. Obviously, we have $\mathcal{K}AAA(\R^+;\X) \subset  AAA(\R^+;\X)$.

In the space $AAA(\R^+;\X)$, we can define the following norm
\begin{equation}\label{NormAAA}
||f||:=\sup_{t\in\R}||g(t)|| +\sup_{t\in\R^+}||h(t)||\, ,\, \, f=g+h,\, \, g\in AA(\R;\X), \, \, h\in C_0(\R^+;\X)\, .
\end{equation}

\noindent The following theorem summarizes some properties of asymptotically almost automorphic functions (c.f. \cite{16})
\begin{thm}\label{TeoAAA}
We have
\begin{enumerate}
\item The space $AAA(\R^+;\X)$ becomes a Banach space under the norm $(\ref{NormAAA})$.
\item $\{g(t) : t \in \mathbb{R} \}\subset \overline{ \{f(t) : t \in \mathbb{R} \}} $, where $f=g+h$, with $g\in  AA(\R; \X)$ and $h\in   C_0(\R^+; \X)$.
\item $AAA(\R^+;\X)=AA(\R;\X)\oplus C_0(\R^+;\X)$. That is, the decomposition of an asymptotically almost automorphic function is unique.
\item Let $g \in AAA(\R^+;\X)$, then its range is relatively compact in $\X$.
\end{enumerate}
\end{thm}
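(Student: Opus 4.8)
The plan is to treat the four items in an order that reflects their logical dependence rather than their listed order, since items (2) and (3) both rest on a single recurrence property of almost automorphic functions, and the well-posedness of the norm in (1) in turn requires the uniqueness asserted in (3).

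First I would isolate the key lemma: if $g\in AA(\R;\X)$ and $t_0\in\R$, then there is a sequence $\tau_n\to+\infty$ with $g(t_0+\tau_n)\to g(t_0)$. To prove it, apply Definition \ref{defAA} to the sequence $s'_n=n$, obtaining a subsequence $\{s_n\}$ and a limit $\tilde g$ with $g(t+s_n)\to\tilde g(t)$ and $\tilde g(t-s_n)\to g(t)$ pointwise. Fixing $t_0$, for $\eps>0$ I would first choose $m$ with $\|\tilde g(t_0-s_m)-g(t_0)\|<\eps/2$ and then, for that fixed $m$, choose $n$ large with $\|g(t_0-s_m+s_n)-\tilde g(t_0-s_m)\|<\eps/2$; since $s_n-s_m\to+\infty$ as $n\to\infty$, the increments $\tau:=s_n-s_m$ can be made arbitrarily large while keeping the two-sided estimate, yielding the asserted sequence. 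This double-sequence extraction is the one genuinely delicate step; everything else reduces to it.

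With the lemma in hand, item (2) is immediate: writing $f=g+h$ with $g\in AA(\R;\X)$ and $h\in C_0(\R^+;\X)$ and taking $t_0\in\R$, the sequence $\tau_n\to+\infty$ gives $f(t_0+\tau_n)=g(t_0+\tau_n)+h(t_0+\tau_n)\to g(t_0)$, because $h(t_0+\tau_n)\to0$; hence $g(t_0)\in\overline{\{f(t):t\in\R^+\}}$. For item (3) it suffices to show $AA(\R;\X)\cap C_0(\R^+;\X)=\{0\}$, for then two representations $g_1+h_1=g_2+h_2$ force $g_1-g_2=h_2-h_1\in AA(\R;\X)\cap C_0(\R^+;\X)$; and if $\phi$ lies in this intersection, the lemma gives $\phi(t_0)=\lim_n\phi(t_0+\tau_n)=0$ for every $t_0$, so $\phi\equiv0$ and the decomposition is unique.

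It remains to handle (1) and (4), both of which I expect to be routine. Uniqueness from (3) makes the norm \eqref{NormAAA} well defined, and the norm axioms are direct, nondegeneracy once more using $AA(\R;\X)\cap C_0(\R^+;\X)=\{0\}$; completeness follows because a Cauchy sequence $f_k=g_k+h_k$ in this norm splits into a sequence $\{g_k\}$ that is Cauchy in $(AA(\R;\X),\|\cdot\|_\infty)$ and a sequence $\{h_k\}$ that is Cauchy in $(C_0(\R^+;\X),\|\cdot\|_\infty)$, each of which is complete, so that $g_k\to g$, $h_k\to h$, and $f_k\to g+h\in AAA(\R^+;\X)$. For (4) I would recall that $\overline{\{g(t):t\in\R\}}$ is compact by relative compactness of ranges of almost automorphic functions, and that $\overline{\{h(t):t\in\R^+\}}$ is compact since $h$ is continuous with $h(t)\to0$ (any sequence of values is either eventually near a value on a compact interval or tends to $0$); then $\{f(t):t\in\R^+\}\subseteq\overline{\{g(t):t\in\R\}}+\overline{\{h(t):t\in\R^+\}}$, which is compact as the image of a product of compact sets under the continuous addition map, so the range of $f$ is relatively compact.
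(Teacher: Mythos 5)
Your proof is correct, and there is nothing in the paper to compare it against: the paper does not prove Theorem \ref{TeoAAA} at all, but quotes it as a known result (``c.f.\ \cite{16}''). Measured against the classical proof in that reference, your argument is essentially the standard one, with one organizational difference. The textbook route to item (2) skips your double extraction with the $\eps/2$-bookkeeping: applying Definition \ref{defAA} to $s_n'=n$ gives $s_n\to+\infty$ and, for every fixed $s\in\R$,
$$\tilde g(s)=\lim_{n\to\infty} g(s+s_n)=\lim_{n\to\infty}\bigl(g(s+s_n)+h(s+s_n)\bigr)=\lim_{n\to\infty} f(s+s_n)\, ,$$
since $h(s+s_n)\to 0$ and $s+s_n\ge 0$ eventually; hence $\tilde g$ takes all its values in the closed set $\overline{\{f(t):t\in\R^+\}}$, and then $g(t)=\lim_{n}\tilde g(t-s_n)$ lies in that same closed set. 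Your recurrence lemma (a sequence $\tau_n\to+\infty$ with $g(t_0+\tau_n)\to g(t_0)$) is the same mechanism made quantitative, and it buys you something: the single lemma serves items (2) and (3) simultaneously, and it makes transparent why $AA(\R;\X)\cap C_0(\R^+;\X)=\{0\}$, which is exactly what is needed both for the direct-sum statement and for the norm \eqref{NormAAA} to be well defined. Two details you use implicitly and could state: (i) $t_0+\tau_n\ge 0$ for large $n$, so that evaluating $f$ and $h$ there makes sense (this also silently corrects the paper's typo in item (2), where the closure should be over $t\in\R^+$); (ii) in the completeness argument, it is the uniqueness from item (3) that identifies the decomposition of $f_k-f_l$ as $(g_k-g_l)+(h_k-h_l)$ and hence splits a Cauchy sequence for \eqref{NormAAA} into two sup-norm Cauchy sequences --- you do note this dependence by ordering (3) before (1), which is the right logical order.
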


\begin{defi}
     	A function $f:\Z \to \mathbb{X}$ is called discrete almost automorphic if, for any sequence $\{s_{n}'\}\subset\Z$, there exists a subsequence $\{s_{n}\}\subset\{s_{n}'\}$  such that the following pointwise limits hold:
     	$$\lim_{n\to +\infty}f(k+s_{n})=:\tilde{f}(k),\;\; \lim_{n\to+\infty}\tilde{f}(k-s_{n})=f(k), \;k\in \Z.$$
     \end{defi}
     
     We denote this space by $\mathbb{AA}(\Z,\mathbb{X})$. A discrete almost automorphic function is also called almost automorphic sequence. We have the following result, see \cite{araya2009almost}
     
\begin{thm}\label{Bseq01}
Let  $f\in \mathbb{AA}(\Z,\mathbb{X})$, then there exists $M>0$ such that $\sup_{n\in \mathbb{Z}}||f(n)|| =M$. Moreover, $\mathbb{AA}(\Z,\mathbb{X})$  is a Banach space under the supremum norm $||\cdot||_{\infty}=\sup_{n\in \mathbb{Z}}||\cdot ||$.
\end{thm}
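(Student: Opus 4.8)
The plan is to prove the two assertions in turn: first that every element of $\mathbb{AA}(\Z,\X)$ is bounded (which gives a finite supremum $M$), and then that the space is complete under $\norm{\cdot}_\infty$. For the \emph{boundedness}, I would argue by contradiction. If $f$ were unbounded, there would be a sequence $\{s_n'\}\subset\Z$ with $\norm{f(s_n')}\to+\infty$. Applying the definition of discrete almost automorphy to $\{s_n'\}$ yields a subsequence $\{s_n\}\subseteq\{s_n'\}$ along which $\lim_{n}f(k+s_n)=\tilde f(k)$ pointwise; evaluating at $k=0$ shows $\{f(s_n)\}$ converges in $\X$ and is therefore bounded. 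This contradicts $\norm{f(s_n)}\to+\infty$ (a subsequence of a sequence diverging to $+\infty$ still diverges). Hence $\sup_{n\in\Z}\norm{f(n)}<\infty$, and we set $M$ equal to this supremum (which is positive whenever $f\not\equiv 0$).

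For \emph{completeness}, the key reduction is to observe that $\mathbb{AA}(\Z,\X)$ is a linear subspace of $\ell^\infty(\Z,\X)$, the space of bounded sequences in $\X$ equipped with $\norm{\cdot}_\infty$; since $\X$ is a Banach space, $\ell^\infty(\Z,\X)$ is complete, so it suffices to show that $\mathbb{AA}(\Z,\X)$ is \emph{closed}. Thus I would take a sequence $f_j\in\mathbb{AA}(\Z,\X)$ with $f_j\to f$ in $\norm{\cdot}_\infty$ and prove $f\in\mathbb{AA}(\Z,\X)$. Given an arbitrary $\{s_n'\}\subset\Z$, I would run a Cantor diagonal extraction: for each $j$ choose nested subsequences so that both limits in the definition hold for $f_j$, and let $\{s_n\}$ be the diagonal sequence. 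Then for every $j$ one has $\lim_n f_j(k+s_n)=\tilde f_j(k)$ and $\lim_n \tilde f_j(k-s_n)=f_j(k)$ for all $k\in\Z$.

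It remains to transfer these limits to $f$. Using a standard $3\eps$ estimate, for fixed $k$ I would bound $\norm{f(k+s_n)-f(k+s_m)}$ by $2\norm{f-f_j}_\infty+\norm{f_j(k+s_n)-f_j(k+s_m)}$; choosing $j$ large and then $n,m$ large shows $\{f(k+s_n)\}_n$ is Cauchy, hence convergent in $\X$ to a value I call $\tilde f(k)$. Passing to the limit in $\norm{f(k+s_n)-f_j(k+s_n)}\le\norm{f-f_j}_\infty$ gives $\norm{\tilde f(k)-\tilde f_j(k)}\le\norm{f-f_j}_\infty$, so $\tilde f_j\to\tilde f$ uniformly on $\Z$. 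The reverse limit $\lim_n\tilde f(k-s_n)=f(k)$ then follows from the splitting $\norm{\tilde f(k-s_n)-f(k)}\le \norm{\tilde f-\tilde f_j}_\infty+\norm{\tilde f_j(k-s_n)-f_j(k)}+\norm{f_j-f}_\infty$, again by sending $j\to\infty$ and $n\to\infty$ in the right order. This establishes $f\in\mathbb{AA}(\Z,\X)$ and hence closedness.

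The routine parts are the boundedness argument and the bookkeeping of the $3\eps$ estimates; the genuine obstacle is the \emph{interchange of the two limits} (in $j$ and in $n$), which is exactly what forces the diagonal construction together with the uniformity coming from $\norm{f_j-f}_\infty\to 0$ and the completeness of $\X$. Everything hinges on producing a \emph{single} subsequence $\{s_n\}$ that simultaneously serves all the $f_j$, so that the uniform-in-$j$ control can be combined with the pointwise-in-$n$ convergence.
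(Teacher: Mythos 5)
Your proof is correct. Note that the paper does not prove Theorem \ref{Bseq01} at all — it simply quotes the result from \cite{araya2009almost} — and your argument (boundedness by contradiction, evaluating the almost automorphy limits at $k=0$ to force convergence of $\{f(s_n)\}$; completeness by viewing $\mathbb{AA}(\Z,\X)$ as a subspace of $\ell^\infty(\Z,\X)$ and proving closedness via a Cantor diagonal extraction plus the $3\eps$ estimates) is precisely the standard proof given in that cited reference, so it matches the intended argument in both structure and detail.
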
     

\section{$\mathbb{Z}$-almost automorphic type functions}\label{discontAAf}
In this section, we introduce and develop the theory of discontinuous almost automorphic functions  from $\mathbb{R}$ to the Banach space $\mathbb{X}$, which we call $\mathbb{Z}$-almost automorphic functions and also present some of its basic properties.

\subsection{$\mathbb{Z}$-almost automorphic functions}
\begin{defi}
The set $PC_{\Z}(\R; \X)$ consists of functions defined on all $\R$ and which satisfy the following:
\begin{enumerate}
\item $f$ is continuous in the intervals $[n,n+1),\, n \in \Z$.
\item For each $n\in \Z$, the lateral limit $\lim_{t \to n^-}f(t)$ exists, and we define 
 $$f(n^-):=\lim_{t \to n^-}f(t)\, .$$
\end{enumerate}
\end{defi}
Note that, every continuous function belongs to $PC_{\Z}(\R; \X)$, therefore $PC_{\Z}(\R; \X)\not = \emptyset$.

\begin{thm}\label{CompletePC}
The space  $PC_{\Z}(\R; \X)\cap B(\mathbb{R};\mathbb{X})$ is complete under the supremum norm.
\end{thm}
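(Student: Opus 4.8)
The plan is to exploit the fact that $PC_{\Z}(\R;\X)\cap B(\R;\X)$ sits inside the space $B(\R;\X)$ of all bounded functions from $\R$ to $\X$, which is itself complete under the supremum norm precisely because $\X$ is a Banach space. Since the intersection is clearly a linear subspace, completeness will follow at once if I show that it is \emph{closed} in $B(\R;\X)$. Accordingly, I would take an arbitrary sequence $\{f_k\}\subset PC_{\Z}(\R;\X)\cap B(\R;\X)$ converging uniformly to some $f\in B(\R;\X)$ and verify that $f$ satisfies the two defining conditions of $PC_{\Z}(\R;\X)$.

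The first condition, continuity of $f$ on each interval $[n,n+1)$, is immediate: the convergence is in particular uniform on $[n,n+1)$, and a uniform limit of continuous functions is continuous, so $f\big|_{[n,n+1)}$ is continuous for every $n\in\Z$.

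The delicate point, and the main obstacle, is the second condition: the existence of the left limit $f(n^-)=\lim_{t\to n^-}f(t)$ for every $n\in\Z$, since this forces me to interchange the limit $k\to\infty$ with the limit $t\to n^-$. To handle this cleanly I would fix $n$ and extend each restriction $f_k\big|_{[n-1,n)}$ to a function $g_k$ on the compact interval $[n-1,n]$ by setting $g_k(n):=f_k(n^-)$, which exists by hypothesis; each $g_k$ is then continuous on $[n-1,n]$. A short estimate gives $\norm{g_k-g_m}_{\infty,[n-1,n]}\le\norm{f_k-f_m}_{\infty}$: on $[n-1,n)$ the functions $g_k-g_m$ coincide with $f_k-f_m$, while at the endpoint continuity of the norm yields $\norm{g_k(n)-g_m(n)}=\lim_{t\to n^-}\norm{f_k(t)-f_m(t)}\le\norm{f_k-f_m}_{\infty}$. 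Since $\{f_k\}$ is Cauchy in $B(\R;\X)$, the sequence $\{g_k\}$ is Cauchy in the Banach space $C([n-1,n];\X)$ and hence converges uniformly to some $g\in C([n-1,n];\X)$. On $[n-1,n)$ we have $g=f$, so the continuity of $g$ at $n$ gives $\lim_{t\to n^-}f(t)=g(n)$, establishing the left limit. This shows $f\in PC_{\Z}(\R;\X)\cap B(\R;\X)$, so the subspace is closed in $B(\R;\X)$ and therefore complete.
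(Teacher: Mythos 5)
Your proof is correct, and while it shares the paper's overall skeleton (take a Cauchy sequence, produce a bounded limit function, verify the two defining conditions of $PC_{\Z}(\R;\X)$), it diverges at the one genuinely delicate step --- the existence of the left limit $f(n^-)$ --- and there your argument is actually tighter than the paper's. The paper proves continuity on each $[n,n+1)$ by a three-epsilon estimate on compact subintervals $[n,\theta_n]$, essentially what you get from uniform convergence; but for the left limit it phrases the claim as ``$\Vert \lim_{\zeta\to n^-}f(\zeta)\Vert<\infty$'' and bounds that norm by $\lim_{\zeta\to n^-}\Vert f(\zeta)-f_{k_0}(\zeta)\Vert + \Vert f_{k_0}(n^-)\Vert$, an estimate which presupposes that the limit exists and only shows it would be finite in norm; as written it does not exclude oscillation of $f$ as $t\to n^-$. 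Your device --- extend $f_k$ restricted to $[n-1,n)$ to $g_k\in C([n-1,n];\X)$ by setting $g_k(n):=f_k(n^-)$, check $\sup_{[n-1,n]}\Vert g_k-g_m\Vert\le\Vert f_k-f_m\Vert_{\infty}$, and invoke completeness of $C([n-1,n];\X)$ --- genuinely produces the limit $g(n)=\lim_{t\to n^-}f(t)$ from the Cauchy property, which is exactly what is needed (equivalently, one could verify Cauchy's criterion for $f(t)$ as $t\to n^-$ directly, transferring it from $f_{k_0}$ by uniform approximation). So your route buys rigor precisely where the paper's argument is loosest, at the modest cost of introducing the auxiliary Banach spaces $C([n-1,n];\X)$; the framing via closedness in $B(\R;\X)$ rather than the paper's direct pointwise-limit construction is an inessential difference.
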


\begin{proof}
Let $\{ f_k \}$ be a Cauchy sequence in  $PC_{\Z}(\R; \X)\cap B(\mathbb{R};\mathbb{X})$. Then, for each $t\in \mathbb{R}$, $\{ f_k(t) \}$ is a Cauchy sequence in $\mathbb{X}$, therefore there exists a function $f:\mathbb{R}\to \X$ such that $f_k(t)\to f(t)$, as $n \to \infty$. Now, pick the compact interval $[n,\theta_n]$ for $n\leq \theta_n < n+1$ and also take $t_m,t_0 \in [n,\theta_n]$ such that $t_m\to t_0$ as $m\to \infty$; then, because of uniform convergence and continuity of $f_k$ in $[n,n+1)$, for each $k$; we have
\begin{eqnarray*}
||f(t_m)-f(t_0) || &\leq & || f_k(t_m)-f(t_m)||+ ||f_k(t_m)-f_k(t_0) ||+  \\
&+& || f_k(t_0)-f(t_0)||\\
&\leq &  \sup_{t\in [n,\theta_n]}|| f(t)-f_k(t)||+ ||f_k(t_m)-f_k(t_0) ||\\
&+& \sup_{t\in [n,\theta_n]}|| f_k(t)-f(t)||\\
&<& \epsilon\, ,
\end{eqnarray*}
which implies that $f(t_m)\to f(t_0)$, as $m\to \infty$. Therefore, $f$ is continuous in each interval $[n,n+1),\, n \in \mathbb{Z}$. Now, we need to prove that $ \lim_{\zeta\to n^-}f(\zeta)$ exists, i.e. $|| \lim_{\zeta\to n^-}f(\zeta)||<\infty$. In fact, by uniform convergence and since $\{f_k\}\subset PC_{\Z}(\R; \X)$, we obtain, for $k_0$ large enough 
\begin{eqnarray*}
|| \lim_{\zeta\to n^-}f(\zeta)|| &\leq &  \lim_{\zeta\to n^-}||f(\zeta)-f_{k_0}(\zeta)|| \\
& + & || f_{k_0}(n^-)|| \\
& <& \infty\, .
\end{eqnarray*}
It follows that, 
$$f(n^{-}):=\lim_{\zeta \to n^{-}}f(\zeta)\, ,$$
is well defined. Therefore, the limit function $f$ belongs to $PC_{\Z}(\R; \X)$ and also is bounded.

\end{proof}
In the following definition, we introduce the class of piecewise continuous almost automorphic functions
\begin{defi}\label{ZAA}
A function $f\in PC_{\Z}(\R; \X)$ is called $\Z$-almost automorphic if, for any sequence of integers $\{s_n'\}\subset \Z$, there exists a subsequence $\{s_n\}\subset \{s_n'\}$ and a function  $\tilde{f}:\R \to \X$ such that the following pointwise limits hold
\begin{equation}\label{limZ}
\lim_{n \to +\infty}f(t+s_n)= \tilde{f}(t),\,\,\, \lim_{n \to +\infty}\tilde{f}(t-s_n)= f(t),\,\,\, t \in \R .
\end{equation}
\end{defi}

Let us denote this class of functions by $\Z AA(\R; \X)$.
\begin{ejm} 
If $f \in AA(\R; \X)$, then $ f([ \cdot ])\in \Z AA(\R; \X)$.
\end{ejm}
In particular, for $\X =\R$ let $\psi : \R \to \R$ be the almost automorphic function defined by $\psi(t)=\sin (\dfrac{1}{2+ \cos(t)+\cos(t\sqrt{2})})$, then the discontinuous function $g: \R \to \R$ defined by 
$$g(t)=\psi([ t ] )=\sin (\dfrac{1}{2+ \cos([ t ])+\cos( [ t ] \sqrt{2})})$$
belongs to $\mathbb{Z} AA(\R; \mathbb{R} )$.
 

\begin{lem} 
$AA(\R; \X) \subsetneq  \Z AA(\R; \X)$.
\end{lem}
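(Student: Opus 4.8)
The plan is to prove the two halves of a strict inclusion separately: first the containment $AA(\R;\X)\subseteq \Z AA(\R;\X)$, and then the existence of a function lying in the larger class but not the smaller one. For the containment I would take an arbitrary $f\in AA(\R;\X)$. Since such an $f$ is continuous on all of $\R$, it automatically belongs to $PC_{\Z}(\R;\X)$, as was already observed for continuous functions. It then remains to check the $\Z$-almost automorphy condition of Definition \ref{ZAA}. The decisive observation is purely set-theoretic: the defining sequence in Definition \ref{ZAA} ranges over $\Z$, whereas the defining sequence in Definition \ref{defAA} ranges over all of $\R$. Hence, given any integer sequence $\{s_n'\}\subset\Z$, it is in particular a sequence of real numbers, and applying the almost automorphy of $f$ yields a subsequence $\{s_n\}$ and a limit function $\tilde f$ for which the pointwise limits \eqref{Defaa} hold. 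These are precisely the limits demanded in \eqref{limZ}, so $f\in \Z AA(\R;\X)$, and the inclusion follows.

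For strictness I would exhibit an element of $\Z AA(\R;\X)$ that is not continuous, since every member of $AA(\R;\X)$ is continuous by definition. The natural candidate comes from the scalar almost automorphic function $\psi(t)=\sin\bigl(1/(2+\cos t+\cos(\sqrt 2\, t))\bigr)$: fixing a nonzero vector $x_0\in\X$, set $g(t)=\psi([t])\,x_0$. Because $\psi(\cdot)x_0\in AA(\R;\X)$, the Example preceding this lemma gives immediately that $g=\psi([\,\cdot\,])x_0\in \Z AA(\R;\X)$. The only remaining point is to confirm that $g$ is genuinely discontinuous, hence not in $AA(\R;\X)$. For this I would evaluate $\psi$ at two consecutive integers: $\psi(0)=\sin(1/4)$, while $\psi(1)=\sin\bigl(1/(2+\cos 1+\cos\sqrt 2)\bigr)$, and these values differ numerically. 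Consequently, on $[0,1)$ one has $g\equiv \psi(0)x_0$, so $\lim_{t\to 1^-}g(t)=\psi(0)x_0\neq \psi(1)x_0=g(1)$, exhibiting a jump at $t=1$. Thus $g\in \Z AA(\R;\X)\setminus AA(\R;\X)$ and the inclusion is strict.

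I do not anticipate any serious obstacle in this argument: the inclusion is essentially a tautology once one notes that integer sequences are real sequences, and the strictness reduces to producing a single jump discontinuity. The only place warranting a trace of care is the verification that $g$ is not continuous, which I would keep lightweight by recording that one integer $n$ with $\psi(n-1)\neq\psi(n)$ suffices, and that such an $n$ plainly exists since $\psi$ is not constant on $\Z$.
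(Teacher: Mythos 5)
Your proof is correct and takes essentially the same route as the paper: the inclusion $AA(\R;\X)\subseteq \Z AA(\R;\X)$ is noted as immediate (integer sequences being real sequences), and strictness is witnessed by exactly the same example $g(t)=\psi([t])\,x_0$ built from the classical almost automorphic function $\psi$. Your explicit verification of the jump at $t=1$ via $\psi(0)\neq\psi(1)$ is a detail the paper leaves implicit, but it is the same argument.
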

\begin{proof}
It is easy to see that, every almost automorphic functions belongs to $\Z AA(\R; \X)$. Now, taking $x_0 \in \X \setminus \{0\} $ and by the previous example, we conclude that the  discontinuous function
$$g(t)=f([t] )=\sin (\dfrac{1}{2+ \cos([t] )+\cos([t] \sqrt{2})}) x_0\, ,$$
belongs to $\Z AA(\R; \X)$, but not to  $AA(\R; \X)$\, .
\end{proof}

The following function is important in the proof of some results

\begin{defi} Let $f \in \Z AA(\R;\X)$. We define the function $F_f:\R \to \X$ in the following way: If $n \leq t \leq n+1$ with $n \in \mathbb{Z}$, then 

\begin{equation}\label{EFE}
F_f(t)= \left\{ \begin{array}{lcc}
             f(t) &   if  & n \leq t < n+1\, , \\
             \\ f((n+1)^-) &  if & t=n+1\, .
             \end{array}
   \right.
\end{equation}
\end{defi}
Note that $F_f$ is uniformly continuous in each interval of the form $[n,n+1]$, for $n \in \mathbb{Z}$.

Observe also that, if $f \in \Z AA(\R; \X)$, then its restriction to $\Z$ is an almost automorphic sequence; therefore, using Theorem \ref{Bseq01} we conclude the following

\begin{lem}\label{lem010}
Let $f \in \Z AA(\R; \X)$, then
$$\sup_{n \in \Z} ||f(n)||<\infty \, .$$
\end{lem}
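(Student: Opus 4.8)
The plan is to reduce the assertion to the already-established boundedness of discrete almost automorphic functions, namely Theorem \ref{Bseq01}, by verifying that the restriction of any $f \in \Z AA(\R;\X)$ to the integers is an almost automorphic sequence. As the remark preceding the statement indicates, this restriction argument is essentially the entire content of the lemma.

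First I would fix an arbitrary sequence of integers $\{s_n'\}\subset\Z$ and apply Definition \ref{ZAA} to it (regarding it simply as a sequence of real numbers). This produces a subsequence $\{s_n\}\subset\{s_n'\}$ and a limit function $\tilde f:\R\to\X$ for which the pointwise limits (\ref{limZ}) hold for every $t\in\R$. Next I would specialize these limits to integer arguments $t=k\in\Z$: since each shift $s_n$ is an integer, we have $k\pm s_n\in\Z$, so evaluating (\ref{limZ}) at $t=k$ yields
$$\lim_{n\to+\infty} f(k+s_n)=\tilde f(k),\qquad \lim_{n\to+\infty}\tilde f(k-s_n)=f(k),\qquad k\in\Z.$$
Setting $g:=f|_{\Z}$ and $\tilde g:=\tilde f|_{\Z}$, these are precisely the two defining limits of a discrete almost automorphic function, with the single subsequence $\{s_n\}$ serving both equations. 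Because $\{s_n'\}$ was arbitrary, this shows $g\in\mathbb{AA}(\Z,\X)$.

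Finally I would invoke Theorem \ref{Bseq01}, which guarantees that every element of $\mathbb{AA}(\Z,\X)$ is bounded, so that $\sup_{n\in\Z}\|g(n)\|=M<\infty$ for some $M>0$. Since $g(n)=f(n)$ for all $n\in\Z$, this immediately gives $\sup_{n\in\Z}\|f(n)\|<\infty$, which is the desired conclusion.

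I do not expect any serious obstacle here; the argument is short and direct. The only point deserving a moment of care — and the one genuinely substantive step — is the observation that the subsequence extracted from the real-variable definition can be reused verbatim in the discrete definition. This works precisely because the shifts $s_n$ are integers, so they carry integer arguments to integer arguments; no fresh extraction, refinement, or diagonal procedure is required, and $\tilde f|_{\Z}$ automatically furnishes the limit sequence $\tilde g$ needed in the definition of $\mathbb{AA}(\Z,\X)$.
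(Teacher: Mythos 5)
Your proposal is correct and is exactly the paper's argument: the paper proves this lemma by observing that the restriction of $f$ to $\Z$ is a discrete almost automorphic sequence (since Definition \ref{ZAA} uses integer shifts, which map integer arguments to integer arguments) and then invoking Theorem \ref{Bseq01}. You have simply written out the details of that observation, which the paper leaves implicit.
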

In the following Lemma, we prove that the jump of discontinuities of a function in  $\Z AA(\R; \X)$ is a bounded set.

\begin{lem}\label{jump}
Let $f \in \Z AA(\R; \X)$, then
$$\sup_{n \in \Z} || f(n)-f(n^-)|| <\infty \, .$$
\end{lem}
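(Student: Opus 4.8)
We want to show $\sup_{n\in\Z}\|f(n)-f(n^-)\|<\infty$. The strategy is to reduce the supremum of the jumps to the supremum of two separately bounded sequences. Indeed, by the triangle inequality
$$\|f(n)-f(n^-)\| \le \|f(n)\| + \|f(n^-)\|,$$
so it suffices to bound $\sup_{n\in\Z}\|f(n)\|$ and $\sup_{n\in\Z}\|f(n^-)\|$ separately. The first supremum is already finite by Lemma \ref{lem010}, since the restriction of $f$ to $\Z$ is a discrete almost automorphic sequence. Thus the whole problem reduces to establishing $\sup_{n\in\Z}\|f(n^-)\|<\infty$, i.e. boundedness of the sequence of left-hand limits.

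To handle the left limits, the plan is to show that the sequence $g(n):=f(n^-)$ is itself a discrete almost automorphic sequence and then invoke Theorem \ref{Bseq01}. The natural way to do this is to exploit the two pointwise limits in Definition \ref{ZAA}. Given an integer sequence $\{s_n'\}$, extract the subsequence $\{s_n\}$ and limit function $\tilde f$ guaranteed by $\Z$-almost automorphy of $f$. Because each $s_n$ is an integer, translation by $s_n$ maps the interval $[m-1,m)$ onto $[m-1+s_n,\,m+s_n)$, so the left limit at $m$ of the translated function $f(\cdot+s_n)$ is exactly $f((m+s_n)^-)=g(m+s_n)$. The aim is then to pass the pointwise convergence $f(t+s_n)\to\tilde f(t)$ through the left-limit operation at integer points, obtaining $g(m+s_n)=f((m+s_n)^-)\to \tilde f(m^-)$, and symmetrically for the reverse limit, so that $g$ satisfies the defining relations of a discrete almost automorphic sequence with limit sequence $\tilde g(m):=\tilde f(m^-)$.

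The main obstacle is precisely this interchange of the pointwise limit $n\to\infty$ with the one-sided limit $t\to m^-$: pointwise convergence alone does not in general commute with taking limits. The way to control this is through the auxiliary function $F_f$ from \eqref{EFE}, which is continuous (indeed uniformly continuous) on each closed interval $[m-1,m]$ and satisfies $F_f(m)=f(m^-)$. One reformulates the jump boundedness in terms of $F_f$ evaluated at integers: $f(m^-)=F_f(m)$, and $F_f(t+s_n)$ is again of the same type, so that the left-limit values are recovered as genuine function values of a continuous object. I expect that the cleanest route is to first prove the companion fact that $F_f\in\Z AA(\R;\X)$ (or that its integer restriction is almost automorphic), after which Theorem \ref{Bseq01} applied to $F_f|_{\Z}$ immediately yields $\sup_{n}\|F_f(n)\|=\sup_n\|f(n^-)\|<\infty$, completing the argument. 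If establishing the full $\Z$-almost automorphy of $F_f$ is heavier than needed, a lighter alternative is to argue by contradiction: if $\|f(n_k^-)\|\to\infty$ along some integers $n_k$, apply $\Z$-almost automorphy to the sequence $\{-n_k\}$ and derive a contradiction with the pointwise convergence of $\tilde f(t-s_n)\to f(t)$ near an integer, using uniform continuity of $F_f$ on a single compact interval to localize the blow-up.
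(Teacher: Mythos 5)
Your opening reduction (via Lemma \ref{lem010} and the triangle inequality) to proving $\sup_{n\in\Z}\Vert f(n^-)\Vert<\infty$ is exactly the paper's first step. The problem is what comes after. Your primary route is to show that the sequence $g(n)=f(n^-)$ (equivalently, $F_f$ restricted to $\Z$) is discrete almost automorphic and then quote Theorem \ref{Bseq01}. That is a strictly \emph{stronger} assertion than the lemma, and you never actually prove it: you correctly identify that it requires interchanging the pointwise limit $n\to\infty$ in (\ref{limZ}) with the one-sided limit $t\to m^-$, and then you only say you ``expect'' this can be done through $F_f$. The mechanism you invoke does not do it: $F_f$ is uniformly continuous on each interval $[m-1,m]$ \emph{separately}, with a modulus of continuity depending on $m$, and Definition \ref{ZAA} supplies no uniformity across intervals --- the convergence $f(m+s_n-\tau)\to f((m+s_n)^-)$ as $\tau\to0^+$ can become arbitrarily slow as $n$ grows (think of ramps of shrinking width placed just to the left of the integers), which is exactly the situation where pointwise limits at fixed $t$ give no control on left limits. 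Moreover, in a general Banach space $\X$ you could not even extract a pointwise convergent subsequence of $\left\{f((m+s_n)^-)\right\}_n$ without some relative compactness of these values, which nothing provides. So the primary plan is circular: it replaces the lemma by a harder statement suffering from the same missing ingredient. The paper never claims, let alone proves, that $n\mapsto f(n^-)$ is almost automorphic; it proves only boundedness, and it does so by contradiction.

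Your ``lighter alternative'' is indeed the genre of the paper's actual proof, but as sketched it is aimed in the wrong direction. If $\Vert f(n_k^-)\Vert\to\infty$ and you apply Definition \ref{ZAA} to $s_k=-n_k$, the limit you propose to exploit, $\tilde f(t-s_k)=\tilde f(t+n_k)\to f(t)$, controls values of $\tilde f$ near $+n_k$ and values of $f$ at the \emph{fixed} point $t$; it says nothing about $f$ just to the left of $n_k$, where the blow-up lives, so no contradiction can be reached this way. The paper applies the definition to the blow-up sequence $\left\{+n_k\right\}$ itself and uses the \emph{forward} limit $f(t+n_k)\to\tilde f(t)$ at a fixed point $t=-\delta_m/2$ slightly to the left of $0$. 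All of the content of the proof is then in the quantitative bookkeeping your sketch omits: choose left-limit moduli $\delta_n\in(0,1)$ with $\delta_n\to0$, fix $m$ large, then $n_0$ large, and combine (i) the left-limit estimate $\Vert f(\eta_{n_0}^-)-f(\eta_{n_0}-\delta_{n_0}/2)\Vert<\epsilon/3$, (ii) continuity of $F_f$ on the single interval $[\eta_{n_0}-1,\eta_{n_0}]$ to compare $f(\eta_{n_0}-\delta_{n_0}/2)$ with $f(\eta_{n_0}-\delta_m/2)$, and (iii) the forward limit at $-\delta_m/2$, so as to conclude $\Vert f(\eta_{n_0}^-)\Vert\le\epsilon+\Vert\tilde f(-\delta_m/2)\Vert<R$, contradicting $\Vert f(\eta_{n_0}^-)\Vert\ge R$. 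With the orientation corrected and this scaffolding supplied, your fallback would essentially become the paper's proof; as written, both of your routes leave the hard step --- the one you yourself flagged --- unproved.
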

\begin{proof}
Firstly, by Lemma \ref{lem010} and since $f(n^{-})$ exists for all $n \in \mathbb{Z}$, then 
$$\sup_{n \in \Z} || f(n)-f(n^-)|| <\infty \, ,$$
is equivalent to 
$$\sup_{n \in \Z: |n|\geq K} || f(n^-)|| <\infty \, ,$$
for an arbitrary large real number $K$, which is equivalent to

$$\sup_{n \in \Z: n\geq K} || f(n^-)|| <\infty \, \,  \, \wedge \ \, \sup_{n \in \Z: n\leq -K} ||f(n^-)|| <\infty\, ,$$
for an arbitrary large real number $K$.

Let us suppose that 
$$\sup_{n \in \Z: n\geq K} || f(n^-)|| =+\infty ;$$
that is, there exists a sequence (strictly increasing) of integer numbers $\{\eta_n'\}\subset \Z$, such that 
$$\lim_{n \to +\infty} || f((\eta_n')^-)||=+\infty \, . $$
Since $f \in \Z AA(\R; \X)$, there exists a subsequence $\{\eta_n \} \subset \{\eta_n'\}$ and a function $\tilde{f}$ such that (\ref{limZ}) holds, and also
\begin{equation}\label{NewEeQq}
 \lim_{n \to +\infty} || f((\eta_n)^-)||=+\infty \, .
\end{equation}

Now, let $\epsilon >0$ be arbitrarily small, then 
\begin{enumerate}
\item By definition of $f((\eta_n)^-)$, there exists a sequence  $\{\delta_n\} \subset (0,1)$ such that if $0<\eta_n -\tau  < \delta_n$, then $||f(\tau)-f(\eta_n^{-})||< \epsilon $. Moreover, the sequence $\{\delta_n\}$ may be chosen in such a way that $\delta_n \to 0$ if $n \to \infty$. That is, there exists $N_0 \in \mathbb{N}$ such that
$$||f(\eta_n^{-})-f(\eta_n-\dfrac{\delta_n}{2})||< \dfrac{\epsilon}{3}, \, \, \forall n \geq N_0\, .$$
Also, there exists $N_1\in \mathbb{N}$ such that,
$$|\delta_n - \delta_m|< \dfrac{\epsilon}{3},\, \, \forall n,m \geq N_1\, .$$

\item There exists $N_2\in \mathbb{N}$ such that, for $m\geq N_2$ (fixed) we have 
$$||f(\eta_n- \dfrac{\delta_m}{2})-\tilde{f}(-\dfrac{\delta_m}{2})|| < \dfrac{\epsilon}{3},\, \, \forall n \geq N_2\, .$$

\item Note also that, (\ref{NewEeQq}) is equivalent to: given $R>0$, there exist $M>0$ and $N_3 \in \mathbb{N}$ such that
\begin{equation}\label{EqFin1}
 ||f(\eta_n^{-})||\geq R,\, \, \, \forall\, \, |\eta_n| \geq M ,\, \, \, \forall  n\geq N_3\, .
\end{equation}

\end{enumerate}
Now, let $T=max \{N_0,N_1,N_2,N_3\}$, $m > T$ fixed and  $R$ be such that 
$$R>1+ ||\tilde{f}(-\dfrac{\delta_m}{2})||\, .$$
Then, if $n_0 > T$ is fixed, and since the function $F_f$ defined in (\ref{EFE}) is uniformly continuous in $[\eta_{n_0}-1, \eta_{n_0}]$, we have
\begin{eqnarray*}
||f(\eta_{n_0}^{-})|| &\leq & ||f(\eta_{n_0}^{-})-f(\eta_{n_0}-\dfrac{\delta_{n_0}}{2})||+ ||f(\eta_{n_0}-\dfrac{\delta_{n_0}}{2})-f(\eta_{n_0}- \dfrac{\delta_m}{2})||+ \\
&+& ||f(\eta_{n_0}- \dfrac{\delta_m}{2})-\tilde{f}(-\dfrac{\delta_m}{2})|| + ||\tilde{f}(-\dfrac{\delta_m}{2})||\\ 
&< & \epsilon + ||\tilde{f}(-\dfrac{\delta_m}{2})||< R \, ,
\end{eqnarray*}
which is a contradiction with (\ref{EqFin1}). Analogously, we proceed for the case 
$$\sup_{n \in \Z: n\leq -M} ||f(n^-)|| = +\infty\, .$$


\end{proof}

\begin{thm}\label{finalthm}
Let $f,g \in \Z AA(\R;\X)$, then, we have
\begin{enumerate}
\item [1)] $f+g \in \Z AA(\R;\X) $
\item [2)] If $\alpha \in \R$, then $\alpha f \in \Z AA(\R;\X)$.
\item [3)] $f$ is bounded.
\item [4)] If $\tilde{f}$ is the function defined in Definition  \ref{ZAA}, then $\tilde{f}$ is bounded, moreover
$$\sup_{t \in \R}||\tilde{f}(t)||=\sup_{t \in \R}||f(t)||\, .$$
\end{enumerate}
\end{thm}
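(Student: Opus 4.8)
The plan is to treat the four assertions in increasing order of difficulty: items 1), 2) and 4) follow the standard template for almost automorphic functions, while item 3) is the genuine obstacle, where Lemmas \ref{lem010} and \ref{jump} must be combined with a delicate two-scale continuity argument.

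For 1) and 2) I would first record that $PC_{\Z}(\R;\X)$ is a vector space: on each interval $[n,n+1)$ a finite linear combination of continuous functions is continuous, and the left limits add, so $f+g$ and $\alpha f$ again lie in $PC_{\Z}(\R;\X)$. To obtain the almost automorphic limits for $f+g$ I would use the usual diagonal extraction: given $\{s_n'\}\subset\Z$, apply Definition \ref{ZAA} to $f$ to extract $\{s_n''\}\subseteq\{s_n'\}$ producing $\tilde f$, then apply it to $g$ along $\{s_n''\}$ to extract a further subsequence $\{s_n\}$ producing $\tilde g$. Along $\{s_n\}$ both limits in (\ref{limZ}) hold simultaneously, so setting $\widetilde{f+g}:=\tilde f+\tilde g$ gives $f(t+s_n)+g(t+s_n)\to\tilde f(t)+\tilde g(t)$ and the converse limit by linearity of limits in $\X$. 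Item 2) is the same with a single extraction and the factor $\alpha$ pulled through the limit.

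The core is item 3). Since $F_f$ (see (\ref{EFE})) is continuous on each compact $[n,n+1]$, $f$ is bounded on every such interval, so if $f$ were unbounded there would be integers $\eta_k$ with $|\eta_k|\to\infty$ and numbers $\theta_k\in[0,1)$ with $\|f(\eta_k+\theta_k)\|\to\infty$; passing to a subsequence I may assume $\theta_k\to\theta_0\in[0,1]$. By Lemma \ref{lem010} the integer values $f(\eta_k)$ stay bounded and by Lemma \ref{jump} the left limits $f(\eta_k^-)$ stay bounded, so the divergence is genuinely interior to the intervals and must be controlled through the almost automorphic structure. I would then extract, using Definition \ref{ZAA} along $\{\eta_k\}$, a subsequence (not relabelled) and a limit function $\tilde f$ with $f(t+\eta_k)\to\tilde f(t)$ pointwise; in particular $f(\eta_k+\rho)$ stays bounded for each fixed phase $\rho$, with $\|f(\eta_k+\rho)\|\to\|\tilde f(\rho)\|<\infty$. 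The contradiction is produced exactly as in Lemma \ref{jump}: fixing a small $\eps$, I bridge $f(\eta_k+\theta_k)$ to the fixed-phase value $f(\eta_k+\rho)$ through the uniform continuity of $F_f$ on the single interval $[\eta_k,\eta_k+1]$, using a two-scale choice of increments (one tied to the running index $k$ and one to a fixed index $m$) together with $\theta_k\to\theta_0$, so as to obtain $\|f(\eta_k+\theta_k)\|\le\eps+\|\tilde f(\rho)\|$ for infinitely many $k$, contradicting $\|f(\eta_k+\theta_k)\|\to\infty$. The main obstacle here is precisely that the modulus of continuity of $F_f$ on $[\eta_k,\eta_k+1]$ is not uniform in $k$; the fixed-versus-running increment device of Lemma \ref{jump} is what circumvents this, and I would reuse it verbatim.

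Finally, item 4) is immediate once 3) is known. Taking norms in the pointwise limit $f(t+s_n)\to\tilde f(t)$ and using Part 3) gives $\|\tilde f(t)\|=\lim_n\|f(t+s_n)\|\le\sup_{s\in\R}\|f(s)\|$, so $\tilde f$ is bounded and $\sup_{t}\|\tilde f(t)\|\le\sup_{t}\|f(t)\|$; taking norms in the reverse limit $\tilde f(t-s_n)\to f(t)$ gives $\|f(t)\|\le\sup_{s\in\R}\|\tilde f(s)\|$, hence $\sup_{t}\|f(t)\|\le\sup_{t}\|\tilde f(t)\|$. The two inequalities yield the claimed equality $\sup_{t\in\R}\|\tilde f(t)\|=\sup_{t\in\R}\|f(t)\|$.
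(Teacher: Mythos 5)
Your proposal matches the paper's proof in all essentials: items 1)--2) are proved by the same nested subsequence extraction with $\widetilde{f+g}=\tilde f+\tilde g$, item 4) by the same pair of norm-limit inequalities, and item 3) by the same contradiction argument, writing the unbounded sequence as $s_n=t_n+\xi_n$ with $t_n\to t_0\in[0,1]$, invoking Lemmas \ref{lem010} and \ref{jump}, the pointwise limits (\ref{limZ}), the uniform continuity of $F_f$ on each $[n,n+1]$, and the fixed-versus-running increment device from the proof of Lemma \ref{jump}. The only cosmetic difference is that the paper splits item 3) into the cases $t_0=1$ (handled via the left limit $f((1+\xi_n)^-)$ together with the jump bound of Lemma \ref{jump}) and $t_0<1$, whereas you describe the bridging argument in a unified way; the ingredients you list are exactly those the paper uses.
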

\begin{proof}

\noindent $1)-2)$ Let $\{s_n'\}$ be a sequence of integer numbers. Since $f \in \Z AA(\R;\X)$, there exists a subsequence $\{s_n\} \subset \{s_n'\}$ and a function $\tilde{f}$ such that the following pointwise limits hold
$$\lim_{n \to +\infty}f(t+s_n)= \tilde{f}(t),\,\,\, \lim_{n \to +\infty}\tilde{f}(t-s_n)= f(t),\,\,\, t \in \R .$$

Moreover, since $g \in \Z AA(\R;\X)$, there exists a subsequence $\{r_n\} \subset \{s_n\}$ and a function  $\tilde{g}$ such that the following pointwise limits hold 
$$\lim_{n \to +\infty}g(t+r_n)= \tilde{g}(t),\,\,\, \lim_{n \to +\infty}\tilde{g}(t-r_n)= g(t),\,\,\, t \in \R .$$

Now, if we consider the function  $\tilde{f}+\alpha \tilde{g}$; then, for each $t\in \mathbb{R}$ we have

$$\lim_{n \to +\infty}(f+\alpha g)(t+r_n)= (\tilde{f} + \alpha \tilde{g})(t)\, ,$$
$$ \lim_{n \to +\infty}(\tilde{f} + \alpha \tilde{g})(t-r_n)= (f+\alpha g)(t)\, .$$
Therefore, $f+\alpha g \in \Z AA(\R;\X)$ for every $\alpha \in \R$.

\noindent $3)$ Let us suppose that $f$ is not bounded, then there exists a sequence $\{s_n'\}$ of real numbers such that $\lim_{n\to \infty}||f(s_n')||=\infty$.

On the other hand, there exists a subsequence $ \{s_n\}\subset \{s_n'\}$ such that $s_n=t_n+\xi_n $ with $t_n \in [0,1[ , \,  \lim_{n\to \infty}t_n=t_0\in [0,1]$ and $\xi_n \in \Z$; moreover, $\lim_{n\to \infty}||f(s_n)||=\infty$. Since $f \in \Z AA(\R ;\X)$, the sequence $\xi_n$ can be chosen in such a way that there exists a function $\tilde{f}$ and the pointwise limits in (\ref{limZ}) hold. Now, the proof will be made in two steps:

\begin{enumerate}
\item [a)] {\bf $t_0 =1$}. In this case we have:
\begin{eqnarray*}
||f(s_n)|| &\leq & ||f(t_n+\xi_n)-f(1+\xi_n)||+ ||f(1+\xi_n)||\\
&\leq & ||f(t_n+\xi_n)- f((1+\xi_n)^-)|| + \\ 
&+& || f((1+\xi_n)^-)-f(1+\xi_n)||+\\
&+& ||f(1+\xi_n)-\tilde{f}(1)||+ ||\tilde{f}(1)||\, .
\end{eqnarray*}
Now, from the definition of $f((1+\xi_n)^-)$, the existence of the pointwise limits in (\ref{limZ}) and Lemma \ref{jump}, we have: given $\epsilon >0$, there exists $N_0 \in \mathbb{N}$ such that if $ n \geq N_0$, then 
$$||f(s_n)|| \leq 2\epsilon +||\tilde{f}(1)||+ \sup_{n\in \Z}||f(n^-)-f(n)|| < \infty\, ,$$
a contradiction.

\item [b)] {\bf $t_0 < 1$}. In this case we have :
\begin{eqnarray*}
||f(s_n)|| &\leq & ||f(t_n+\xi_n)-f(t_0+\xi_n)||+ ||f(t_0+\xi_n)||\\
&\leq & ||f(t_n+\xi_n)-f(t_0+\xi_n)||+\\
&+& ||f(t_0+\xi_n)-\tilde{f}(t_0)||+ ||\tilde{f}(t_0)||\, .
\end{eqnarray*}
Since $F_f$ is  uniformly  continuous in $[n,n+1]$ for every $n\in \Z$, and the limits in (\ref{limZ}) hold true, we proceed analogously to the proof of Lemma \ref{jump} in order to have a contradiction.
\end{enumerate}

\noindent $4)$ Note the following inequalities:
$$||\tilde{f}(t)||=\lim_{n \to \infty}||f(t+s_n)|| \leq \sup_{t \in \R} ||f(t)||\, ,$$

$$||f(t)||=\lim_{n \to \infty}||\tilde{f}(t-s_n)|| \leq \sup_{t \in \R} ||\tilde{f} (t)||\, .$$


\end{proof}

\begin{obs}\label{OBS01}
From items 1) and 2) of previous theorem, we conclude that $\Z AA(\R;\X)$ is a vectorial space. Moreover, from item 3) we have that $\Z AA(\R;\X) \subset B(\mathbb{R};\mathbb{X})$, which means that it is a normed vector space with the norm of uniform convergence, i.e. $||\cdot||_{\infty}$\, .
\end{obs}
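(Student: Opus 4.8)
The plan is to establish the three assertions of the observation in sequence, each following almost immediately from Theorem \ref{finalthm} together with the structural facts already recorded for $PC_{\Z}(\R;\X)$. First, to see that $\Z AA(\R;\X)$ is a vector space, I would view it as a subset of $PC_{\Z}(\R;\X)$, which is itself a vector space under pointwise addition and scalar multiplication: continuity on each interval $[n,n+1)$ and the existence of the left limits $f(n^-)$ are both stable under finite linear combinations, and the zero function clearly lies in $\Z AA(\R;\X)$ (take $\tilde{f}\equiv 0$ in Definition \ref{ZAA}). Items 1) and 2) of Theorem \ref{finalthm} then state precisely that $\Z AA(\R;\X)$ is closed under sums and under multiplication by real scalars, so it is a linear subspace of $PC_{\Z}(\R;\X)$, hence a vector space in its own right.

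Second, item 3) of Theorem \ref{finalthm} asserts that every $f\in \Z AA(\R;\X)$ is bounded, and this is exactly the content of the inclusion $\Z AA(\R;\X)\subset B(\R;\X)$. Combining this with the first point, $\Z AA(\R;\X)$ is a linear subspace of the normed space $\big(B(\R;\X),||\cdot||_{\infty}\big)$, so the restriction of $||\cdot||_{\infty}$ is automatically a norm on it; this yields the final claim that $\Z AA(\R;\X)$ is a normed vector space under the norm of uniform convergence.

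The key point to emphasize is that the only nontrivial ingredient is the boundedness furnished by item 3): without it the quantity $\sup_{t\in\R}||f(t)||$ could be infinite and would fail to define a norm, so this is exactly where the earlier analysis (Lemma \ref{jump} and the contradiction argument in the proof of Theorem \ref{finalthm}) is indispensable. There is therefore no genuine obstacle remaining at this stage: the substantive work lives in Theorem \ref{finalthm}, and the observation merely assembles its items into the statement that $\Z AA(\R;\X)$ is a normed vector space. The completeness of this space with respect to $||\cdot||_{\infty}$ is a separate matter, to be addressed afterward via Theorem \ref{CompletePC} once closedness of $\Z AA(\R;\X)$ in $PC_{\Z}(\R;\X)\cap B(\R;\X)$ is verified.
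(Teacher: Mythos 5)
Your proposal is correct and takes essentially the same approach as the paper: the observation is simply an assembly of items 1), 2), and 3) of Theorem \ref{finalthm}, with the boundedness from item 3) being exactly what makes $||\cdot||_{\infty}$ a well-defined norm on $\Z AA(\R;\X)\subset B(\R;\X)$. Your added details (the subspace framing inside $PC_{\Z}(\R;\X)$, the zero function, and deferring completeness to the later theorem) are consistent with how the paper organizes the material.
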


The following theorem shows that, $\Z AA(\R;\X)$ is complete under the norm of uniform convergence.

\begin{thm}\label{BanachZAA}
$\Z AA(\R;\X)$ is a Banach space under the norm of uniform convergence .
\end{thm}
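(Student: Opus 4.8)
The plan is to realize $\Z AA(\R;\X)$ as a closed subspace of the complete space $PC_{\Z}(\R;\X)\cap B(\mathbb{R};\mathbb{X})$ of Theorem \ref{CompletePC}. By Observation \ref{OBS01}, $\Z AA(\R;\X)$ is a normed subspace of $PC_{\Z}(\R;\X)\cap B(\mathbb{R};\mathbb{X})$; hence, once I show it is closed under uniform convergence, its completeness follows at once from Theorem \ref{CompletePC}.

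So let $\{f_k\}\subset \Z AA(\R;\X)$ converge uniformly to some $f$. By Theorem \ref{CompletePC} we already know $f\in PC_{\Z}(\R;\X)\cap B(\mathbb{R};\mathbb{X})$, so it only remains to verify that $f$ fulfills Definition \ref{ZAA}. Fix an arbitrary sequence of integers $\{s_n'\}\subset\Z$. I would run a diagonal argument: since $f_1\in \Z AA(\R;\X)$, extract from $\{s_n'\}$ a subsequence along which $f_1(\cdot+s_n)$ converges pointwise to a limit $\tilde{f}_1$; refine this subsequence for $f_2$, and so on. The resulting diagonal subsequence, still denoted $\{s_n\}$, then yields for every $k$ a limit function $\tilde{f}_k$ with $f_k(t+s_n)\to \tilde{f}_k(t)$ and $\tilde{f}_k(t-s_n)\to f_k(t)$ pointwise for all $t\in\R$.

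The next step is to build the candidate limit function $\tilde{f}$. Here I invoke item $4)$ of Theorem \ref{finalthm}: applied to $f_k-f_j\in \Z AA(\R;\X)$, whose associated limit function along $\{s_n\}$ is precisely $\tilde{f}_k-\tilde{f}_j$ by linearity of pointwise limits, it gives $\sup_{t\in\R}||\tilde{f}_k(t)-\tilde{f}_j(t)||=\sup_{t\in\R}||f_k(t)-f_j(t)||$. Since $\{f_k\}$ is Cauchy in the supremum norm, so is $\{\tilde{f}_k\}$, and therefore $\tilde{f}_k\to\tilde{f}$ uniformly for some bounded $\tilde{f}$. The two required pointwise limits then follow from the $3\epsilon$ decomposition
$$||f(t+s_n)-\tilde{f}(t)||\leq ||f-f_k||_{\infty}+||f_k(t+s_n)-\tilde{f}_k(t)||+||\tilde{f}_k-\tilde{f}||_{\infty}\, ,$$
choosing $k$ large to control the first and third terms uniformly in $n$, and then $n$ large to annihilate the middle term; the symmetric estimate delivers $\tilde{f}(t-s_n)\to f(t)$. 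Hence $f\in \Z AA(\R;\X)$.

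The main obstacle I anticipate is bookkeeping rather than conceptual: making sure the single diagonal subsequence $\{s_n\}$ serves all the $f_k$ simultaneously, and justifying that the limit function attached to $f_k-f_j$ is exactly $\tilde{f}_k-\tilde{f}_j$ so that item $4)$ of Theorem \ref{finalthm} applies. That identity is the crux, since the resulting isometry $\sup_{t\in\R}||\tilde{f}_k(t)-\tilde{f}_j(t)||=\sup_{t\in\R}||f_k(t)-f_j(t)||$ is precisely what transfers the Cauchy property from $\{f_k\}$ to $\{\tilde{f}_k\}$ and thus guarantees the existence of $\tilde{f}$.
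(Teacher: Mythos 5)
Your proposal is correct and follows essentially the same route as the paper's proof: closedness inside the complete space $PC_{\Z}(\R;\X)\cap B(\R;\X)$ of Theorem \ref{CompletePC}, a diagonal subsequence serving all the $f_k$ simultaneously, convergence of the associated limit functions $\tilde f_k$, and a $3\epsilon$ estimate to verify Definition \ref{ZAA} for the limit $f$. The only (harmless) variation is that you transfer the Cauchy property to $\{\tilde f_k\}$ by applying the norm equality of item $4)$ of Theorem \ref{finalthm} to the differences $f_k-f_j$, whereas the paper obtains the same bound $\norm{\tilde f_k(t)-\tilde f_j(t)}\leq \norm{f_k-f_j}_{\infty}$ directly from a three-term estimate along the subsequence.
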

\begin{proof}
First, from item $3)$ of Theorem \ref{finalthm} (see also Observation \ref{OBS01}), we conclude the inclusion $\Z AA(\R;\X) \subset B(\mathbb{R};\mathbb{X})$. Now, let $\{f_k\}$ be a sequence of functions in $\Z AA(\R;\X)$ that converge uniformly to $f$. Due to Theorem \ref{CompletePC}, we know that $f\in PC_{\Z}(\R; \X)$ and is bounded. Therefore, it only rest to prove that  $f \in \Z AA(\R;\X)$. Let $\{ s_n \}$ be a sequence of integer numbers, then using  the diagonal procedure there exists a subsequence $\{ \eta_n \}$ of $\{ s_n \}$ and a sequence of functions $\{ \tilde{f}_k \}$ such that, for each $t\in \mathbb{R}$, we have
\begin{eqnarray}
\lim_{n\to \infty}f_k(t+\eta_n) &=& \tilde{f}_k(t)\, , \label{ConvDiag1}\\
\lim_{n\to \infty} \tilde{f}_k(t-\eta_n) &=& f_k(t)\, .\label{ConvDiag2}
\end{eqnarray}

Let us prove that for each $t\in \R$,  $\{ \tilde{f_k}(t)\}$ is a Cauchy sequence. Indeed, let us first look at the following inequality:
\begin{eqnarray*}
||\tilde{f}_k(t)-\tilde{f}_l(t)|| &\leq & ||\tilde{f}_k(t)-f_k(t+\eta_n)|| + ||f_k(t+\eta_n)-f_l(t+\eta_n)|| + \\
& \leq & ||f_l(t+\eta_n)-\tilde{f}_l(t)||\,  .
\end{eqnarray*}
Since $\{ f_k \}$ is a Cauchy sequence and by the pointwise  convergence in (\ref{ConvDiag1}) and (\ref{ConvDiag2}), we have that given $\epsilon >0$, there exists $N_0 \in \mathbb{N}$ such that if $n,m \geq N_0$, then
$$||\tilde{f}_n(t)-\tilde{f}_m(t)||< \epsilon\, .$$
Since $\X$ is a Banach space, there exists a function $\tilde{f}$ such that $\tilde{f}_k(t)$ converge to $\tilde{f}(t)$. Now, we show that the following pointwise limits hold
\begin{eqnarray}
\lim_{n \to \infty}f(t+\eta_n)&=&\tilde{f}(t)\, , \label{EeQq1}\\
\lim_{n \to \infty}\tilde{f}(t-\eta_n)&=&f(t)\, .  \label{EeQq2}
\end{eqnarray}
The limit in (\ref{EeQq1}) is obtained from the following inequality 
\begin{eqnarray*}
||f(t+\eta_n)-\tilde{f}(t)|| &\leq & ||f(t+\eta_n)-f_k(t+\eta_n)||+ ||f_k(t+\eta_n)- \tilde{f}_k(t)||+\\
&+& || \tilde{f}_k(t) - \tilde{f}(t)||\, .
\end{eqnarray*}
Analogously, we have the limit in (\ref{EeQq2})\, .
\end{proof}

%

\begin{defi}
A Banach space $\mathbb{X}$, with norm $||\cdot ||_{\mathbb{X}}$ is a Banach algebra, if it is an associative algebra (with addition $+_\mathbb{X}$ and multiplication   $\times_\mathbb{X} $), and  satisfy:
$$||x \times_\mathbb{X}  y||_{\mathbb{X}} \leq ||x||_{\mathbb{X}}\, \, ||y||_{\mathbb{X}}\, , \, \, \,\forall \, x,y \in \mathbb{X}\, .$$
\end{defi}
 
\begin{thm}\label{thm001} Let  $\mathbb{X}$ be a Banach algebra with norm $||\cdot ||_{\mathbb{X}}$,  addition $+_{\mathbb{X}}$ and multiplication $\times_{\mathbb{X}}$, then $\mathbb{Z} AA(\mathbb{R},\mathbb{X})$ is also a Banach algebra with norm $||\cdot ||_{\infty}$ and  the operations: if $f,g \in \mathbb{Z} AA(\mathbb{R},\mathbb{X})$, then
$$ (f+ g)(t):=f(t) +_\mathbb{X} g(t)\, , \, \, \, t \in \mathbb{R}\, ,$$
$$ (f\cdot g)(t):=f(t) \times_\mathbb{X} g(t)\, , \, \, \, t \in \mathbb{R}\, .$$
\end{thm}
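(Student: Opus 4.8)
The plan is to verify the three facts that promote the Banach space $\Z AA(\R;\X)$ (already established in Theorem \ref{BanachZAA}) to a Banach algebra: that the pointwise product of two $\Z$-almost automorphic functions again lies in $PC_{\Z}(\R; \X)$, that it is in fact $\Z$-almost automorphic, and that the supremum norm is submultiplicative. All the purely algebraic axioms (associativity, distributivity, bilinearity, and the unit if $\X$ is unital) are inherited pointwise from $\X$ and require no argument, so the entire content lies in closure under multiplication together with the norm inequality.

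First I would check that $f\cdot g \in PC_{\Z}(\R; \X)$. On each interval $[n,n+1)$ both $f$ and $g$ are continuous, and since the multiplication $\times_{\X}$ is jointly continuous on $\X\times\X$ (a consequence of submultiplicativity), the product $t\mapsto f(t)\times_{\X}g(t)$ is continuous there. For the left limits, $f(n^-)$ and $g(n^-)$ exist by definition of $PC_{\Z}(\R; \X)$, and joint continuity of $\times_{\X}$ gives $\lim_{t\to n^-}f(t)\times_{\X}g(t)=f(n^-)\times_{\X}g(n^-)$, so $(f\cdot g)(n^-)$ is well defined. Next, the submultiplicative estimate
$$\|(f\cdot g)(t)\|_{\X}=\|f(t)\times_{\X}g(t)\|_{\X}\le \|f(t)\|_{\X}\,\|g(t)\|_{\X}\le \|f\|_{\infty}\,\|g\|_{\infty}$$
simultaneously shows that $f\cdot g$ is bounded and yields the desired inequality $\|f\cdot g\|_{\infty}\le \|f\|_{\infty}\,\|g\|_{\infty}$; here the boundedness of $f$ and $g$ is item 3 of Theorem \ref{finalthm}.

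The heart of the matter is the $\Z$-almost automorphy of $f\cdot g$. Given a sequence of integers $\{s_n'\}$, I would extract nested subsequences exactly as in the proof of Theorem \ref{finalthm}: first a subsequence along which $f(t+s_n)\to\tilde f(t)$ and $\tilde f(t-s_n)\to f(t)$ pointwise, then a further subsequence $\{r_n\}$ along which $g(t+r_n)\to\tilde g(t)$ and $\tilde g(t-r_n)\to g(t)$ pointwise while the limits for $f$ persist. I then claim the limit function of the product is $\tilde f\cdot\tilde g$. The verification rests on the factorization estimate
$$\|f(t+r_n)\times_{\X}g(t+r_n)-\tilde f(t)\times_{\X}\tilde g(t)\|_{\X}\le \|f(t+r_n)\|_{\X}\,\|g(t+r_n)-\tilde g(t)\|_{\X}+\|f(t+r_n)-\tilde f(t)\|_{\X}\,\|\tilde g(t)\|_{\X},$$
where the leading factor $\|f(t+r_n)\|_{\X}$ is bounded by $\|f\|_{\infty}$ and $\|\tilde g(t)\|_{\X}\le\|g\|_{\infty}$ by item 4 of Theorem \ref{finalthm}; both terms therefore tend to $0$, giving $\lim_{n}(f\cdot g)(t+r_n)=(\tilde f\cdot\tilde g)(t)$. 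The reverse limit $\lim_{n}(\tilde f\cdot\tilde g)(t-r_n)=(f\cdot g)(t)$ follows from the same estimate with the roles interchanged, again using that $\tilde f$ is bounded (item 4). Hence $f\cdot g\in\Z AA(\R;\X)$.

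I do not anticipate a genuine obstacle: the one point demanding mild care is the bookkeeping of the nested extraction and the confirmation that the leading factors in the factorization stay uniformly bounded, which is precisely what items 3 and 4 of Theorem \ref{finalthm} supply. Once closure under multiplication and submultiplicativity are in hand, combining them with the completeness from Theorem \ref{BanachZAA} completes the proof that $\Z AA(\R;\X)$ is a Banach algebra.
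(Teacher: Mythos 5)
Your proposal is correct and follows essentially the same route as the paper's proof: nested extraction of subsequences for $f$ and $g$, an add-and-subtract factorization estimate whose leading factors are controlled by the boundedness results of Theorem \ref{finalthm}, submultiplicativity of $\|\cdot\|_{\infty}$, and completeness from Theorem \ref{BanachZAA}. The only difference is cosmetic: you verify membership of the product in $PC_{\Z}(\R;\X)$ explicitly (a point the paper leaves implicit) and your estimate pivots on the intermediate term $f(t+r_n)\times_{\X}\tilde g(t)$ rather than the paper's $g(t)\times_{\X}h(t+s_n)$, which is the same argument with the roles of the two factors exchanged.
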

\begin{proof}
First of all, note that
\begin{equation}
\notag\Vert f\cdot g\Vert_{\infty}=\sup_{t\in\mathbb{R}}\Vert f(t)\times_{\mathbb{X}} g(t)\Vert_{\mathbb{X}}\leq\left(\sup_{t\in\mathbb{R}}\Vert f(t)\Vert_{\mathbb{X}}\right)\left(\sup_{t\in\mathbb{R}}\Vert g(t)\Vert_{\mathbb{X}}\right)\, ;
\end{equation}
thus :
\begin{equation}
\notag\Vert f\cdot g \Vert_{\infty}\leq\Vert f\Vert_{\infty}\Vert g\Vert_{\infty}\, , \,\, \forall f,g \in \mathbb{Z} AA(\mathbb{R},\mathbb{X})\, .
\end{equation}
Also, since $\mathbb{X}$ is an associative algebra, then  $ \mathbb{Z} AA(\mathbb{R},\mathbb{X})$ is so, and from Theorem \ref{BanachZAA} we have that $\mathbb{Z} AA(\mathbb{R},\mathbb{X})$ is a Banach space under the supremum norm.

Now, let us verify that $\mathbb{Z} AA(\mathbb{R},\mathbb{X})$ is closed under multiplication. For that, let us denote by $+$ and $-$ the operations $+_{\mathbb{X}}$ and $-_{\mathbb{X}}$ (respectively) in $\mathbb{X}$. Then, let $f, h \in \mathbb{Z} AA(\mathbb{R},\mathbb{X})$, and let 
 $\lbrace s^{\prime}_{n}\rbrace\subset\mathbb{Z}$ be any sequence, then there exists a subsequence $\lbrace s_{n}\rbrace\subset\lbrace s^{\prime}_{n}\rbrace$ and there exist functions $g,k :\mathbb{R}\to \mathbb{X}$ such that, for each $t\in\mathbb{R}$, the following limits hold
\begin{equation}
\notag\lim_{n\to \infty}f(t+s_{n})=g(t)\hspace{0.5cm}and\hspace{0.5cm}\lim_{n\to \infty}g(t-s_{n})=f(t)\, ;
\end{equation}
\begin{equation}
\notag\lim_{n\to \infty}h(t+s_{n})=k(t)\hspace{0.5cm}and\hspace{0.5cm}\lim_{n\to \infty}k(t-s_{n})=h(t)\, .
\end{equation}
Let $M,N>0$ be such that $\sup_{t\in\mathbb{R}}\Vert g(t)\Vert_{\mathbb{X}}=M$ and $\sup_{t\in\mathbb{R}}\Vert h(t)\Vert_{\mathbb{X}}=N$. Then:
\begin{eqnarray*}
\Vert (f\cdot h)(t+s_{n})-(g\cdot k)(t)\Vert_{\mathbb{X}}&\leq &\Vert (f\cdot h)(t+s_{n})-g(t)\times_{\mathbb{X}} h(t+s_{n}) \Vert_{\mathbb{X}} \\ 
&+& \Vert g(t)\times_{\mathbb{X}} h(t+s_{n})-(g\cdot k)(t)\Vert_{\mathbb{X}} \\
&\leq & N \Vert f(t+s_{n})-g(t)\Vert_{\mathbb{X}} + M\Vert h(t+s_{n})-k(t)\Vert_{\mathbb{X}}\, .
\end{eqnarray*}

Therefore,
$$
\lim_{n\to \infty} (f\cdot h)(t+s_n) = (g \cdot k)(t)\, .
$$
Similarly, it can be verified that: 
 \begin{equation}
\lim_{n\to \infty}(g\cdot k)(t-s_{n}) = (f\cdot h)(t) \, .
 \end{equation}
Therefore, $\mathbb{Z} AA(\mathbb{R},\mathbb{X})$ is a Banach algebra.
\end{proof}
\begin{rem}\label{Rem001}
 If $\mathbb{X}$ is a unital Banach algebra with identity element $1_{\mathbb{X}}$, then $\mathbb{Z}AA(\mathbb{R},\mathbb{X})$ is also a unital Banach algebra with identity element the constant function equal to $1_{\mathbb{X}}$.
\end{rem}
\begin{cor}\label{Cor001}
Let $P \in \mathbb{F}[X_1,X_2, \cdots , X_n]$ be a polynomial in several variables, Let $\mathbb{X}$ be a Banach algebra and $f_i \in \mathbb{Z} AA(\mathbb{R}; \mathbb{X})$, $\forall i \in \{1,2,\cdots , n\}$. Then $P(f_1,f_2, \cdots , f_n) \in \mathbb{Z} AA(\mathbb{R}; \mathbb{X})$.
\end{cor}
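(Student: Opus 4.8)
The plan is to reduce the statement to the closure properties already established: closure of $\Z AA(\R;\X)$ under finite sums and scalar multiples (items 1) and 2) of Theorem \ref{finalthm}) and closure under pointwise products (Theorem \ref{thm001}). Writing the polynomial in multi-index form $P=\sum_{\alpha}c_{\alpha}X^{\alpha}$, where $\alpha=(\alpha_{1},\dots,\alpha_{n})$ ranges over a finite set of multi-indices, $c_{\alpha}\in\mathbb{F}$ and $X^{\alpha}=X_{1}^{\alpha_{1}}\cdots X_{n}^{\alpha_{n}}$, the substituted function is $P(f_{1},\dots,f_{n})(t)=\sum_{\alpha}c_{\alpha}\,f_{1}(t)^{\alpha_{1}}\cdots f_{n}(t)^{\alpha_{n}}$, a finite $\mathbb{F}$-linear combination of monomials in the $f_{i}$. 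It therefore suffices to show that each monomial belongs to $\Z AA(\R;\X)$ and then that a finite linear combination of such monomials does too.

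First I would handle products. Theorem \ref{thm001} gives that $\Z AA(\R;\X)$ is closed under the pointwise product whenever $\X$ is a Banach algebra; a straightforward induction on the number of factors (the base case being a single factor, the inductive step applying the binary product closure of Theorem \ref{thm001}) then shows that any finite product of elements of $\Z AA(\R;\X)$ is again in $\Z AA(\R;\X)$. In particular, each power $f_{i}^{\alpha_{i}}$ lies in $\Z AA(\R;\X)$, and hence so does each monomial $f_{1}^{\alpha_{1}}\cdots f_{n}^{\alpha_{n}}$.

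Next I would assemble the linear combination. Since $\Z AA(\R;\X)$ is a vector space by items 1) and 2) of Theorem \ref{finalthm} (for the complex case $\mathbb{F}=\mathbb{C}$, the argument for item 2) applies verbatim to complex scalars when $\X$ is a complex Banach algebra), any finite $\mathbb{F}$-linear combination of the monomials above remains in $\Z AA(\R;\X)$, which yields $P(f_{1},\dots,f_{n})\in\Z AA(\R;\X)$.

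The one point requiring care, and the natural place for a subtlety, is the constant term of $P$, corresponding to the multi-index $\alpha=0$, whose substitution produces the constant function $c_{0}\,1_{\X}$. This term belongs to $\Z AA(\R;\X)$ only when $\X$ is unital, in which case it is covered by Remark \ref{Rem001}; thus for polynomials with nonzero constant term one assumes $\X$ unital, while otherwise one restricts to polynomials without constant term. With this understood, combining the product closure of the second step with the vector-space structure of the third step completes the argument.
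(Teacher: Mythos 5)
Your proposal is correct and follows exactly the route the paper intends: the corollary is stated without an explicit proof precisely because it is the immediate consequence of the vector-space structure (Theorem \ref{finalthm}, items 1--2), closure under products (Theorem \ref{thm001}, extended to finite products by induction), and Remark \ref{Rem001} for the unit. Your added caveats --- that complex scalars are handled by the same argument as item 2), and that a nonzero constant term requires $\mathbb{X}$ to be unital (via Remark \ref{Rem001}) or must be excluded --- are careful refinements of that same implicit argument, not a different approach.
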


We conclude this section with the following new and useful characterization of compact almost automorphic functions (compare with Theorem \ref{CharCAA})
\begin{thm}\label{charCaa}
A function $f:\mathbb{R}\to \mathbb{X}$ is compact almost automorphic if and only if it is $\mathbb{Z}$-almost automorphic and uniformly continuous, thus:
$$\mathcal{K}AA(\mathbb{R}; \mathbb{X})=\mathbb{Z} AA(\mathbb{R}; \mathbb{X})  \cap UC(\mathbb{R}; \mathbb{X})\, .$$
\end{thm}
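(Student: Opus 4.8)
The plan is to reduce the identity to the already-established characterization $\mathcal{K}AA(\R;\X)=AA(\R;\X)\cap UC(\R;\X)$ of Theorem \ref{CharCAA}; since both sides of the claimed equality are intersected with $UC(\R;\X)$, it suffices to prove that a uniformly continuous function is $\Z$-almost automorphic if and only if it is almost automorphic. One inclusion is immediate: if $f\in\mathcal{K}AA(\R;\X)$, then by Theorem \ref{CharCAA} we have $f\in AA(\R;\X)\cap UC(\R;\X)$, and invoking the inclusion $AA(\R;\X)\subseteq\Z AA(\R;\X)$ proved earlier gives $f\in\Z AA(\R;\X)\cap UC(\R;\X)$.

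For the reverse inclusion I would take $f\in\Z AA(\R;\X)\cap UC(\R;\X)$ (note that uniform continuity forces $f\in PC_{\Z}(\R;\X)$ automatically) and show $f\in AA(\R;\X)$; combined with $f\in UC(\R;\X)$ and Theorem \ref{CharCAA} this yields $f\in\mathcal{K}AA(\R;\X)$. Given an arbitrary real sequence $\{s_n'\}$, write $s_n'=\xi_n'+t_n'$ with $\xi_n'\in\Z$ and $t_n'\in[0,1)$. Since $\{t_n'\}$ lies in the compact interval $[0,1]$, I would pass to a subsequence along which $t_n'\to\tau$, and then, applying Definition \ref{ZAA} to the integer sequence $\{\xi_n'\}$, extract a further subsequence $\{s_n\}$ (with associated integer parts $\{\xi_n\}$ and fractional parts $\{t_n\}$, $t_n\to\tau$) together with a function $\tilde f$ for which $f(t+\xi_n)\to\tilde f(t)$ and $\tilde f(t-\xi_n)\to f(t)$ pointwise on $\R$.

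The key intermediate observation is that $\tilde f$ inherits the modulus of continuity of $f$: for all $s,t\in\R$ one has $\|\tilde f(s)-\tilde f(t)\|=\lim_n\|f(s+\xi_n)-f(t+\xi_n)\|\le\omega_f(|s-t|)$, where $\omega_f$ denotes the modulus of continuity of $f$, so $\tilde f\in UC(\R;\X)$. Setting $g(t):=\tilde f(t+\tau)$, I would verify the two almost automorphy limits. For the forward limit, bound $\|f(t+s_n)-g(t)\|\le\|f(t+\xi_n+t_n)-f(t+\tau+\xi_n)\|+\|f(t+\tau+\xi_n)-\tilde f(t+\tau)\|$; the first term tends to $0$ by uniform continuity of $f$ (the arguments differ by $t_n-\tau\to0$) and the second by the $\Z$-almost automorphy limit evaluated at the point $t+\tau$. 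For the backward limit, write $g(t-s_n)=\tilde f(t+\tau-t_n-\xi_n)$ and bound $\|g(t-s_n)-f(t)\|\le\|\tilde f(t+\tau-t_n-\xi_n)-\tilde f(t-\xi_n)\|+\|\tilde f(t-\xi_n)-f(t)\|$; the first term tends to $0$ by uniform continuity of $\tilde f$ (arguments differ by $\tau-t_n\to0$) and the second by the $\Z$-almost automorphy limit. Hence $f\in AA(\R;\X)$, which finishes the argument.

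The main obstacle I anticipate is precisely the control of the backward limit: the $\Z$-almost automorphy of $f$ only provides $\tilde f(t-\xi_n)\to f(t)$ at each fixed real point, whereas here the argument $t+\tau-t_n$ varies with $n$. This is exactly why establishing the uniform continuity of the limit function $\tilde f$ (not assumed, but deduced from that of $f$) is the crux; without it one could not replace the moving argument by the fixed one. The decomposition into integer and fractional parts, together with a single application of Bolzano--Weierstrass, is what allows the $\Z$-valued almost automorphy of $f$ to interact correctly with arbitrary real translations.
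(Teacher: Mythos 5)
Your proposal is correct and follows essentially the same route as the paper's proof: split $s_n=t_n+\xi_n$ with $\xi_n\in\Z$ and $t_n\to t_0$, apply $\Z$-almost automorphy along the integer parts, deduce uniform continuity of $\tilde f$ from that of $f$, and take $\tilde f(\cdot+t_0)$ as the limit function, using uniform continuity to pass from integer to real translates. The only difference is expository: you spell out the modulus-of-continuity argument for $\tilde f$ and the backward limit, which the paper dispatches with ``analogously''.
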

\begin{proof}
The  inclusion $\mathcal{K}AA(\mathbb{R}; \mathbb{X}) \subseteq \mathbb{Z} AA(\mathbb{R}; \mathbb{X})  \cap UC(\mathbb{R}; \mathbb{X})\, $ follows from Theorem \ref{CharCAA}. In order to prove the other inclusion and in light of Theorem \ref{CharCAA} again, it is enough to prove that if $f \in \mathbb{Z} AA(\mathbb{R}; \mathbb{X})  \cap UC(\mathbb{R}; \mathbb{X}), $ then $f\in AA(\mathbb{R}; \mathbb{X})$.

Let $\{s'_{n}\}_{n\in\N}$ be an arbitrary sequence of real numbers, then there exists a subsequence $\{s_{n}\}_{n\in\N}\subset\{s'_{n}\}_{n\in\N}$ of the form $s_n=t_n+\xi_n$. with $\xi_n\in\Z$ and $t_n\in [0,1[$ such that $\ds\lim_{n\to\infty}t_{n}=t_{0}\in [0,1]$. Moreover, $\{\xi_{n}\}_{n\in\N}$ can be chosen such that the following pointwise limits hold
\begin{equation}
		\ds\lim_{n\to \infty}f(t+\xi_{n})=:\tilde{f}(t),\quad \lim_{n\to \infty}\tilde{f}(t-\xi_{n})=f(t), \quad t\in\R\, .
		\label{2222}
	\end{equation}
Since $f$ is uniformly continuous, then $\tilde{f}$ is  uniformly continuous too. Now, let us consider the inequalities
	$$||f(t+s_n)-\tilde{f}(t+t_{0})||$$
	$$\leq ||f(t+t_n+\xi_{n})-f(t+t_0+\xi_{n})||+||f(t+t_0+\xi_{n})-\tilde{f}(t+t_{0})||.$$
From this inequality, the uniform continuity of $f$ and the pointwise limits in (\ref{2222}) we have
$$\lim_{n\to \infty}f(t+s_n)=\tilde{f}(t+t_{0})\, .$$
Analogously, we have
$$\lim_{n\to \infty}\tilde{f}(t+t_{0}-s_n)=f(t)\, .$$

%
\end{proof}

This Theorem says that, in order to prove that a function $f$ is compact almost automorphic, it is enough to verify that $f$ is $\mathbb{Z}$-almost automorphic (i.e. the sequences are taken only from the integers) and also that $f$ is uniformly continuous.

\subsection{Asymptotically $\mathbb{Z}$-almost automorphic functions}

\begin{thm}\label{NewTwoZAA}
Let $f \in \Z AA(\mathbb{R}; \X)$ and $\lim_{t \to +\infty} f(t)=0$, then $f$ is the null function.
\end{thm}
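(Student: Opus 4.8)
The plan is to exploit the definition of $\mathbb{Z}$-almost automorphy by choosing a single, well-chosen test sequence of integers that marches off to $+\infty$, so that the hypothesis $\lim_{t\to+\infty}f(t)=0$ feeds directly into the first limit of the definition. Concretely, I would take $s_n' = n$. Since $f \in \mathbb{Z}AA(\mathbb{R};\mathbb{X})$, Definition \ref{ZAA} provides a subsequence $\{s_n\}\subset\{s_n'\}$ and a limit function $\tilde{f}:\mathbb{R}\to\mathbb{X}$ for which the pointwise limits in (\ref{limZ}) hold.

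First I would observe that, being a subsequence of $\{n\}$, the sequence $\{s_n\}$ is strictly increasing and satisfies $s_n \to +\infty$. Hence, for every fixed $t \in \mathbb{R}$, the argument satisfies $t + s_n \to +\infty$, and the decay hypothesis yields $f(t+s_n) \to 0$. Comparing this with the first limit $\lim_{n\to\infty} f(t+s_n) = \tilde{f}(t)$ and using uniqueness of limits in the Banach space $\mathbb{X}$, I conclude $\tilde{f}(t) = 0$ for every $t \in \mathbb{R}$; that is, $\tilde{f}$ is the null function.

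Next I would invoke the second limit in (\ref{limZ}): for each fixed $t \in \mathbb{R}$ we have $f(t) = \lim_{n\to\infty}\tilde{f}(t-s_n) = \lim_{n\to\infty} 0 = 0$, precisely because $\tilde{f}\equiv 0$. Since $t$ was arbitrary, this gives $f \equiv 0$, as claimed.

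There is essentially no serious obstacle here. The entire argument rests on the single observation that a subsequence of $\{n\}$ necessarily tends to $+\infty$, which is exactly what converts the one-sided decay at $+\infty$ into information about $\tilde{f}$ on all of $\mathbb{R}$, and then the ``return'' limit propagates that triviality back to $f$. Notably, neither continuity, nor the boundedness already guaranteed by Theorem \ref{finalthm}, nor the piecewise structure of $PC_{\Z}(\R;\X)$ is needed beyond the bare definition of $\mathbb{Z}$-almost automorphy.
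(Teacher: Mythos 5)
Your proposal is correct and is essentially the paper's own proof: the paper likewise takes the sequence of natural numbers $s_n=n$, extracts a subsequence and limit function $\tilde{f}$ via Definition \ref{ZAA}, uses the decay hypothesis to conclude $\tilde{f}\equiv 0$, and then applies the return limit to get $f\equiv 0$. Your write-up merely spells out the details (in particular that a subsequence of $\{n\}$ tends to $+\infty$, so $f(t+s_n)\to 0$ pointwise) that the paper leaves implicit.
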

\begin{proof}
Let $\{ s_n\}$ be the sequence of natural numbers, i.e. $s_n=n$. Since $f$ is in $\Z AA(\mathbb{R}; \mathbb{X})$, there  exists a subsequence $\{ \sigma_n \}$ of $\{ s_n \}$ and a function $\tilde{f}$ such that the following pointwise limits hold
$$\lim\limits_{n \to +\infty} f(t+\sigma_n)=:\tilde{f}(t)\, ,$$
		$$\lim\limits_{n \to +\infty}\tilde{f}(t-\sigma_n)=f(t)\, .$$
From this, and using the fact that $\lim_{t \to +\infty} f(t)=0$ we conclude that $\tilde{f}$ is the null function, thus $f$ itself is the null function.
\end{proof}

Before to define the class of asymptotically piecewise continuous almost automorphic functions, let us introduce the set $\Z C_0(\R^+; \X)$ as follows

$$\Z C_0(\R^{+}; \X):=\left\lbrace h\in PC_{\mathbb{Z}}(\mathbb{R}^+; \mathbb{X} )\, : \, \lim_{t \to \infty}h(t)=0 \right\rbrace .$$

\begin{defi}
A function $f\in PC_{\mathbb{Z}}(\mathbb{R}^+; \mathbb{X} )$ is called asymptotically $\Z$-almost automorphic, if it admits a decomposition of the form
$$f=g+h\, ,$$
where $g\in \Z AA(\R; \X)$ and $h \in \Z C_0(\R^+; \X)$.
\end{defi}
We denote this class of functions by $\Z AAA(\R^+; \X)$, this class of functions was introduced in \cite{ding2017asymptotically} under the name of $\Z$-asymptotically almost automorphic functions. Note also that, every element in $\Z C_0(\R^+; \X)$ is bounded, then every element in $\Z AAA(\R^+; \X)$ is also  bounded; thus, we can define the supremum norm in the space $\Z AAA(\R^+; \X)$.

The following theorem summarizes interesting properties of the space $\Z AAA(\R^+; \X)$.

\begin{thm}\label{PropiZAA} We have:
\begin{enumerate}
\item $\Z AAA(\R^+;\X)=\Z AA(\R;\X)\oplus \Z C_0(\R^+;\X)$; i.e. the decomposition of every function in  $\Z AAA(\R^+; \X)$ is unique.
\item $\{g(t) : t \in \mathbb{R} \}\subset \overline{ \{f(t) : t \in \mathbb{R}^+ \}} $, where $f=g+h$, with $g\in \Z AA(\R; \X)$ and $h\in \Z C_0(\R^+; \X)$.
\item If $f \in AAA(\R^+; \X)$, then $f([\cdot ])\in \Z AAA(\R^+; \X)$.
\item Let $f=g+h$,  where $g\in \Z AA(\R; \X)$ and $h\in \Z C_0(\R^+; \X)$. If $f$ is uniformly continuous, then $g$ is also uniformly continuous.
\item Let $f\in \mathbb{Z}AAA(\R^+; \X)$ be uniformly continuous, then $f\in \mathcal{K}AAA(\R^+; \X)$.
\item $\Z AAA(\R^+;\X)$ is a Banach space under the supremum norm.
\end{enumerate}
\end{thm}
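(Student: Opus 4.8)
The plan is to treat the six items in the order listed, since each later one leans on the earlier ones. For item (1), uniqueness of the decomposition, I would suppose $f=g_1+h_1=g_2+h_2$ with $g_i\in\Z AA(\R;\X)$ and $h_i\in\Z C_0(\R^+;\X)$. Then $g_1-g_2\in\Z AA(\R;\X)$ by the vector space structure (Theorem \ref{finalthm}), while on $\R^+$ it coincides with $h_2-h_1$, so $\lim_{t\to+\infty}(g_1-g_2)(t)=0$; Theorem \ref{NewTwoZAA} forces $g_1-g_2\equiv 0$, hence $h_1=h_2$, which is exactly the direct-sum assertion. For item (2) I would imitate the continuous case (Theorem \ref{TeoAAA}, item 2): fixing $t_0\in\R$ and applying the $\Z$-almost automorphy of $g$ to the sequence $s_n=n$, I extract $\{\sigma_n\}$ and $\tilde g$ with $g(t+\sigma_n)\to\tilde g(t)$ and $\tilde g(t-\sigma_n)\to g(t)$. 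Since $\sigma_n\to+\infty$ and $h$ vanishes at infinity, $f(s+\sigma_n)=g(s+\sigma_n)+h(s+\sigma_n)\to\tilde g(s)$ with $s+\sigma_n\ge 0$ eventually, so $\tilde g(s)\in\overline{\{f(t):t\ge 0\}}$ for every $s$; passing to the limit in $\tilde g(t_0-\sigma_n)\to g(t_0)$ and using closedness of the set yields $g(t_0)\in\overline{\{f(t):t\ge 0\}}$.

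For item (3) I would write $f=g+h$ with $g\in AA(\R;\X)$ and $h\in C_0(\R^+;\X)$, so that $f([\cdot])=g([\cdot])+h([\cdot])$. The Example already cited shows $g([\cdot])\in\Z AA(\R;\X)$, while $h([\cdot])$ is constant on each interval $[n,n+1)$, hence lies in $PC_\Z(\R^+;\X)$, and $h([t])\to 0$ because $[t]\to+\infty$; thus $h([\cdot])\in\Z C_0(\R^+;\X)$ and $f([\cdot])\in\Z AAA(\R^+;\X)$.

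Items (4) and (5) are where the real work is, and item (4) is the step I expect to be the main obstacle. The key idea is an equicontinuity transfer. With $f=g+h$ uniformly continuous, I again produce $\{\sigma_n\}$ and $\tilde g$ from the $\Z$-almost automorphy of $g$; since $h(t+\sigma_n)\to 0$ pointwise, the translates $t\mapsto f(t+\sigma_n)$ converge pointwise to $\tilde g$. These translates all share the single modulus of continuity of $f$, so they form a uniformly equicontinuous family and the pointwise limit $\tilde g$ is uniformly continuous on $\R$; applying the same argument to the translates $t\mapsto\tilde g(t-\sigma_n)$, which converge to $g$, shows $g$ is uniformly continuous. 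For item (5), once $g$ is uniformly continuous, Theorem \ref{charCaa} gives $g\in\mathcal{K}AA(\R;\X)$; then $h=f-g$ is \emph{continuous} (both $f$ and $g$ are) and vanishes at infinity, hence $h\in C_0(\R^+;\X)$, so $f=g+h$ realizes precisely the decomposition required by the definition of $\mathcal{K}AAA(\R^+;\X)$.

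Finally, for item (6) I would view $\Z AAA(\R^+;\X)$ as a subspace of the complete space $PC_\Z(\R^+;\X)\cap B(\R^+;\X)$ (the $\R^+$-analogue of Theorem \ref{CompletePC}) and prove it is closed. Given a Cauchy sequence $f_k=g_k+h_k$, the crucial estimate is $\|g_k-g_l\|_\infty\le\|f_k-f_l\|_\infty$: writing $\phi=g_k-g_l\in\Z AA(\R;\X)$ with limit function $\tilde\phi$, the point is that $\tilde\phi(s)=\lim_n\phi(s+\sigma_n)=\lim_n(f_k-f_l)(s+\sigma_n)$ because $(h_k-h_l)(s+\sigma_n)\to 0$, whence $\sup_{s\in\R}\|\tilde\phi(s)\|\le\|f_k-f_l\|_\infty$, and Theorem \ref{finalthm}, item 4, gives $\|\phi\|_\infty=\|\tilde\phi\|_\infty$. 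This makes $\{g_k\}$ Cauchy in the Banach space $\Z AA(\R;\X)$ (Theorem \ref{BanachZAA}), so $g_k\to g\in\Z AA(\R;\X)$; then $h_k=f_k-g_k$ converges uniformly to some $h$, which lies in $\Z C_0(\R^+;\X)$ since it is a uniform limit of piecewise continuous functions vanishing at infinity. Thus $f=g+h\in\Z AAA(\R^+;\X)$, proving completeness. The delicate point throughout is that the supremum norm lives on $\R^+$ while the $\Z$-almost automorphic component lives on all of $\R$; the estimate above, resting on Theorem \ref{finalthm}, item 4, is what bridges the two.
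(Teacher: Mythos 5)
Your proposal is correct, and it is worth noting how it differs from what the paper actually does: the paper's proof of Theorem \ref{PropiZAA} is almost entirely a deferral, stating that items 1--4 are ``analogous to the ones given in \cite{ding2017asymptotically}'', that item 5 follows from item 4 together with Theorem \ref{charCaa} (exactly your argument, including the small but necessary observation that $h=f-g$ is then genuinely continuous, so the decomposition lands in $\mathcal{K}AA(\R;\X)\oplus C_0(\R^+;\X)$), and that item 6 reduces to the inequality $\|g\|_{\infty}+\|h\|_{\infty}\leq 3\|f\|_{\infty}$. You instead supply self-contained proofs of items 1--4 using precisely the machinery the paper sets up: your uniqueness argument via Theorem \ref{NewTwoZAA} is clearly the intended use of that theorem (the paper proves it immediately before this result and never uses it otherwise), and your equicontinuity-transfer proof of item 4 replaces the external citation with a clean direct argument. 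For item 6 the two routes are the same idea in different packaging: the paper's inequality says the sup norm is equivalent to the direct-sum norm, so completeness of $\Z AA(\R;\X)$ (Theorem \ref{BanachZAA}) and of $\Z C_0(\R^+;\X)$ transfers; your Cauchy-sequence argument proves the component estimate $\|g_k-g_l\|_{\infty}\leq\|f_k-f_l\|_{\infty}$ directly (which is just item 2 applied to differences, and is in fact sharper, with constant $1$ rather than $3$), then invokes the same two completeness facts. What your version buys is a fully self-contained proof inside the paper's own toolkit; what the paper's version buys is brevity, at the cost of sending the reader to \cite{ding2017asymptotically} for the substance of items 1--4.
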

\begin{proof}
The proof of the first four items are analogous to the ones given in  \cite{ding2017asymptotically}. Item $5$ is a consequence of item $4$ and Theorem \ref{charCaa}. For item $6$, it is enough to see that if $f= g+h$, where $g\in \Z AA(\R; \X)$ and $h\in \Z C_0(\R^+; \X)$, then
$$||g||_{\infty} + ||h||_{\infty}\leq 3 ||f||_{\infty}\, .$$
\end{proof}

We also have the following
\begin{thm}\label{thm0001} Let  $\mathbb{X}$ be a Banach algebra with norm $||\cdot ||_{\mathbb{X}}$,  addition $+_{\mathbb{X}}$ and multiplication $\times_{\mathbb{X}}$; then, $\mathbb{Z} AAA(\mathbb{R}^+,\mathbb{X})$ is also a Banach algebra with norm $||\cdot ||_{\infty}$ and  the operations: if $f,g \in \mathbb{Z} AAA(\mathbb{R}^+,\mathbb{X})$, then
$$ (f+ g)(t):=f(t) +_\mathbb{X} g(t)\, , \, \, \, t \in \mathbb{R}\, ,$$
$$ (f\cdot g)(t):=f(t) \times_\mathbb{X} g(t)\, , \, \, \, t \in \mathbb{R}\, .$$
Moreover,
\begin{itemize}
\item  If $\mathbb{X}$ is a unital Banach algebra with identity element $1_{\mathbb{X}}$, then $\mathbb{Z}AAA(\mathbb{R}^+,\mathbb{X})$ is also a unital Banach algebra with identity element the constant function equal to $1_{\mathbb{X}}$.

\item If $P \in \mathbb{F}[X_1,X_2, \cdots , X_n]$ is a polynomial in several variables and $f_i \in \mathbb{Z} AAA(\mathbb{R}^+; \mathbb{X})$, $\forall i \in \{1,2,\cdots , n\}$. Then $P(f_1,f_2, \cdots , f_n) \in \mathbb{Z} AAA(\mathbb{R}^+; \mathbb{X})$.
\end{itemize}
\end{thm}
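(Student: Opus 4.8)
The plan is to reduce everything to the structure already assembled in this section. By item 6 of Theorem \ref{PropiZAA} the space $\Z AAA(\R^+,\X)$ is already a Banach space under $\norm{\cdot}_\infty$, and the associativity together with both distributive laws are inherited \emph{pointwise} from the associative algebra $\X$ (evaluate the identities at each $t$). The submultiplicative estimate is also immediate and identical to the one in Theorem \ref{thm001}: for $f,g\in\Z AAA(\R^+,\X)$ one has, for every $t$, $\norm{f(t)\times_\X g(t)}_\X\le\norm{f(t)}_\X\,\norm{g(t)}_\X$, hence $\norm{f\cdot g}_\infty\le\norm{f}_\infty\norm{g}_\infty$. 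Consequently the only genuinely new point to settle is that $\Z AAA(\R^+,\X)$ is \emph{closed} under the pointwise product.

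For closure I would use the unique decomposition $\Z AAA(\R^+,\X)=\Z AA(\R;\X)\oplus\Z C_0(\R^+;\X)$ from item 1 of Theorem \ref{PropiZAA}. Writing $f=g_1+h_1$ and $k=g_2+h_2$ with $g_i\in\Z AA(\R;\X)$ and $h_i\in\Z C_0(\R^+;\X)$, expand the product while respecting the (possibly noncommutative) order in $\X$:
$$f\cdot k=g_1 g_2+\big(g_1 h_2+h_1 g_2+h_1 h_2\big).$$
The first summand $g_1 g_2$ lies in $\Z AA(\R;\X)$ by Theorem \ref{thm001}. It then remains to show that the bracketed remainder lies in $\Z C_0(\R^+;\X)$, which identifies it as the (unique) $\Z C_0$-component of $f\cdot k$ and shows $f\cdot k\in\Z AAA(\R^+;\X)$.

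The verification of the remainder term is the only real work, and it is routine. Each product of two functions from $PC_{\Z}(\R^+;\X)$ again belongs to $PC_{\Z}(\R^+;\X)$: on each interval $[n,n+1)$ the factors are continuous and the multiplication of $\X$ is jointly continuous (by submultiplicativity), so the product is continuous there and its left limit at $n$ equals the product of the left limits. For the decay, recall that elements of $\Z AA(\R;\X)$ are bounded (item 3 of Theorem \ref{finalthm}); hence $\norm{g_1(t)\times_\X h_2(t)}_\X\le\norm{g_1}_\infty\norm{h_2(t)}_\X\to 0$, and symmetrically $h_1 g_2\to 0$, while $\norm{h_1(t)\times_\X h_2(t)}_\X\le\norm{h_1(t)}_\X\norm{h_2(t)}_\X\to 0$ as $t\to\infty$. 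Thus $g_1 h_2+h_1 g_2+h_1 h_2\in\Z C_0(\R^+;\X)$, completing the proof that $\Z AAA(\R^+,\X)$ is a Banach algebra.

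Finally, the two stated consequences follow quickly. If $\X$ is unital with identity $1_\X$, the constant function $t\mapsto 1_\X$ is continuous, hence lies in $\Z AA(\R;\X)\subset\Z AAA(\R^+;\X)$, and it acts as a two-sided identity for the pointwise product; this gives the unital assertion (mirroring Remark \ref{Rem001}). For the polynomial statement, since $\Z AAA(\R^+;\X)$ is a vector space closed under products and contains the constants, an induction on the number of monomials and their degrees shows $P(f_1,\dots,f_n)\in\Z AAA(\R^+;\X)$ for every $P\in\mathbb{F}[X_1,\dots,X_n]$, exactly as Corollary \ref{Cor001} is deduced from Theorem \ref{thm001}.
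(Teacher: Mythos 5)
Your proof is correct. The paper in fact gives no proof of this theorem at all --- it is stated without proof as the asymptotic analogue of Theorem \ref{thm001} --- and your argument supplies exactly the details the paper leaves implicit: completeness from item 6 of Theorem \ref{PropiZAA}, the decomposition $\Z AAA(\R^+;\X)=\Z AA(\R;\X)\oplus \Z C_0(\R^+;\X)$ from item 1, closure of $\Z AA(\R;\X)$ under products via Theorem \ref{thm001}, and boundedness of the almost automorphic parts (Theorem \ref{finalthm}) to force the cross terms $g_1h_2+h_1g_2+h_1h_2$ into $\Z C_0(\R^+;\X)$.
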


\subsection{Composition results}
The results in this subsection are inspired by  (and generalize) the ones given in \cite{ding2008asymptotically}, their proofs are very similar.

Let $\mathbb{T}$ be either $\mathbb{R}^+$ or $\mathbb{R}$; then, we denote by  $PC_{\mathbb{Z}}^1(\mathbb{T} \times \mathbb{X}; \mathbb{Y})$  the set of functions $h:\mathbb{T} \times \mathbb{X} \to  \mathbb{Y}$  such that $h(\cdot,\cdot)$ is continuous in $\mathbb{X}$ (i.e. in the second variable) and for each $x\in \mathbb{X}$, $h_x(\cdot):=h(\cdot,x)\in  PC_{\mathbb{Z}}(\mathbb{T}; \mathbb{Y})$.

\begin{defi}\label{DefZAA}
 A function $f\in PC_{\mathbb{Z}}^1(\R\times\X;\Y)$ is said to be $\mathbb{Z}$-almost automorphic on bounded subsets of $\X$ if, given any bounded subset $\mathcal{B}$ of $\X$ and a sequence $\{s_n'\}\subset \mathbb{Z}$, there exists a subsequence 
$\{s_n\}\subseteq\{s_n'\}$ and a function $\tilde f$, such that the following limits hold:
$$\lim_{n\to\infty}f(t+s_n,x)=\tilde f(t,x)\; ,$$
$$ \lim_{n\to\infty}\tilde f(t-s_n,x)=f(t,x)\; ,$$
where the limits are pointwise in $t\in\R\, $ and uniformly for $x$ in $\mathcal{B}$.
\end{defi}
We denote this class of functions by $\mathbb{Z} AA(\R \times \X, \Y)$.\\
%
Now, let us define the following set
\begin{eqnarray*}
\mathbb{Z}_0C(\mathbb{R}^+\times \mathbb{X}; \mathbb{Y})&:=&
\Big{\{} \phi \in PC_{\mathbb{Z}}^1(\mathbb{R}^+\times \mathbb{X}; \mathbb{Y} ):\lim_{t\to +\infty}\sup_{x\in \mathcal{B}}||\phi(t,x)||=0, \\
&\, & \,  \, \mathcal{B}\,   {\rm a\,  bounded\ 
subset\ of}\ \X  \Big{\}}.
\end{eqnarray*}

\begin{defi}\label{DefZAAA} 
A function $f:\R^+\times\X\to\Y$  is asymptotically $\mathbb{Z}$-almost automorphic in $t\in\R$, uniformly on bounded subsets of $\X$ if, 
$f=g+h$, where  $g \in \mathbb{Z} AA(\R \times \X, \Y)$ and $h \in \mathbb{Z}_0C(\mathbb{R}^+\times \mathbb{X}; \mathbb{Y})$.
\end{defi}
We denote this class of discontinuous almost automorphic functions by $\Z AAA(\mathbb{R}^+\times \mathbb{X}; \mathbb{Y})$.

\begin{defi}

Let $\mathcal{B}\subset \mathbb{X}$ and $\mathcal{T} \subset \mathbb{R}$. We denote by $\mathcal{C}_{\mathcal{B}}(\mathcal{T}\times \mathbb{X}; \mathbb{Y})$, the set of all functions $f:\mathcal{T}\times \mathbb{X} \to \mathbb{Y}$ satisfying $f(t,\cdot )$ is uniformly continuous in $\mathcal{B}$ uniformly for $t\in \mathcal{T}$; thus, $\forall \epsilon >0, \exists \delta >0$ such that if $x_1,x_2 \in \mathcal{B}$ with $||x_1-x_2||< \delta$, then $||f(t,x_1)-f(t,x_2)||<\epsilon $, $\forall t \in \mathcal{T}$.
\end{defi}
\begin{ejm}
Let $b :\mathbb{R}\to \mathbb{R}$ be a bounded function and define $f:\mathbb{R}\times \mathbb{X}\to \mathbb{R}$ by $f(t,x)=b([t]) \sin(||x||)$, then  $f\in \mathcal{C}_{\mathbb{X}}(\mathbb{R}\times \mathbb{X}; \mathbb{R})$.
\end{ejm}

Let us note that, if $f \in \Z AA(\mathbb{R}\times \mathbb{X}; \mathbb{Y}) \cap  \mathcal{C}_{\mathcal{B}}(\mathbb{R}\times \mathbb{X}; \mathbb{Y})$, then the function $\tilde{f}$ defined in Definition \ref{DefZAA} also belongs to $\mathcal{C}_{\mathcal{B}}(\mathbb{R}\times \mathbb{X}; \mathbb{Y})$. With this clarification, we can state the following results

\begin{thm}\label{ThmCompAA} Let $x\in \Z AA(\mathbb{R}; \mathbb{X})$ and $\mathcal{B}=\{ x(t): t \in \mathbb{R} \}$. If $f \in \Z AA(\mathbb{R}\times \mathbb{X}; \mathbb{Y}) \cap  \mathcal{C}_{\mathcal{B}}(\mathbb{R}\times \mathbb{X}; \mathbb{Y})$, then  $f(\cdot, x(\cdot)) \in \Z AA(\mathbb{R}; \mathbb{Y})$.
\end{thm}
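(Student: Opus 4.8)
The plan is to verify that $F(\cdot):=f(\cdot,x(\cdot))$ fulfils Definition \ref{ZAA}. First I would record that, by item 3) of Theorem \ref{finalthm}, the function $x$ is bounded, so $\mathcal{B}=\{x(t):t\in\R\}$ is a bounded subset of $\X$, and hence so is its closure $\overline{\mathcal{B}}$. Given an arbitrary sequence $\{s_n'\}\subset\Z$, I would extract a suitable subsequence by a nested procedure: since $x\in\Z AA(\R;\X)$ there is a subsequence along which $x(\cdot+s_n)\to\tilde{x}$ and $\tilde{x}(\cdot-s_n)\to x$ pointwise; then, applying Definition \ref{DefZAA} to $f$ on the bounded set $\overline{\mathcal{B}}$, I would pass to a further subsequence $\{s_n\}$ along which $f(t+s_n,y)\to\tilde{f}(t,y)$ and $\tilde{f}(t-s_n,y)\to f(t,y)$, both pointwise in $t$ and uniformly for $y\in\overline{\mathcal{B}}$ (pointwise convergence of $x$ survives the further passage to a subsequence). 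The candidate limit is $\tilde{F}(t):=\tilde{f}(t,\tilde{x}(t))$, where $\tilde{x}(t)\in\overline{\mathcal{B}}$ as a pointwise limit of elements of $\mathcal{B}$.

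For the forward limit I would use the splitting
\[
\|f(t+s_n,x(t+s_n))-\tilde{f}(t,\tilde{x}(t))\|\le\|f(t+s_n,x(t+s_n))-f(t+s_n,\tilde{x}(t))\|+\|f(t+s_n,\tilde{x}(t))-\tilde{f}(t,\tilde{x}(t))\|.
\]
The second summand tends to $0$ by the uniform-in-$y$ convergence $f(t+s_n,\cdot)\to\tilde{f}(t,\cdot)$ evaluated at $y=\tilde{x}(t)\in\overline{\mathcal{B}}$. For the first summand I would invoke the $\mathcal{C}_{\mathcal{B}}$-property of $f$: since its uniform continuity in the second variable is uniform in $t$, it extends to $\overline{\mathcal{B}}$, so given $\epsilon>0$ there is $\delta>0$ (independent of $t$) such that $\|x(t+s_n)-\tilde{x}(t)\|<\delta$ forces $\|f(t+s_n,x(t+s_n))-f(t+s_n,\tilde{x}(t))\|<\epsilon$; and $\|x(t+s_n)-\tilde{x}(t)\|\to0$ because $x(\cdot+s_n)\to\tilde{x}$ pointwise. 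Hence $f(t+s_n,x(t+s_n))\to\tilde{f}(t,\tilde{x}(t))$.

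The backward limit is entirely symmetric. I would split
\[
\|\tilde{f}(t-s_n,\tilde{x}(t-s_n))-f(t,x(t))\|\le\|\tilde{f}(t-s_n,\tilde{x}(t-s_n))-\tilde{f}(t-s_n,x(t))\|+\|\tilde{f}(t-s_n,x(t))-f(t,x(t))\|,
\]
bounding the second term by the uniform convergence $\tilde{f}(t-s_n,\cdot)\to f(t,\cdot)$ at $y=x(t)\in\mathcal{B}$, and the first term using that $\tilde{f}\in\mathcal{C}_{\mathcal{B}}$ (the clarification preceding the theorem) together with $\tilde{x}(t-s_n)\to x(t)$. This yields $\tilde{f}(t-s_n,\tilde{x}(t-s_n))\to f(t,x(t))$. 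Since $F=f(\cdot,x(\cdot))$ clearly lies in $PC_{\Z}(\R;\Y)$, the two pointwise limits establish $F\in\Z AA(\R;\Y)$.

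I expect the only delicate point to be the bookkeeping around the closure $\overline{\mathcal{B}}$: the limit values $\tilde{x}(t)$ and $\tilde{x}(t-s_n)$ need not lie in $\mathcal{B}$ itself, so both the uniform convergence of $f$ (resp.\ $\tilde{f}$) and the uniform continuity in the second variable must be applied on $\overline{\mathcal{B}}$ rather than on $\mathcal{B}$. This is legitimate because $\overline{\mathcal{B}}$ is again bounded and because uniform continuity extends continuously to the closure. Everything else reduces to the standard two-term triangle-inequality estimate, carried out uniformly in $t$ thanks to the uniformities built into Definition \ref{DefZAA} and the class $\mathcal{C}_{\mathcal{B}}$.
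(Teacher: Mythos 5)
Your proof is correct, and it is essentially the argument the paper intends: the paper itself omits the proof, deferring to the standard composition argument of Ding--Xiao--Liang \cite{ding2008asymptotically}, which is exactly your nested-subsequence extraction, the candidate limit $\tilde f(t,\tilde x(t))$, and the two-term triangle-inequality estimates using uniform continuity in the second variable (the paper's ``clarification'' that $\tilde f\in\mathcal{C}_{\mathcal{B}}$ is precisely the ingredient you invoke for the backward limit). Your extra care in passing from $\mathcal{B}$ to $\overline{\mathcal{B}}$ (legitimate since $f(t,\cdot)$ is continuous on $\X$ and the convergence is uniform on bounded sets) is a detail the paper glosses over, not a deviation from its approach.
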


\begin{lem} Let $\mathcal{B}$ be a bounded subset of $\mathbb{X} $ and $f\in \Z AA(\mathbb{R}\times \mathbb{X}; \mathbb{Y})\cap  \mathcal{C}_{\mathcal{B}}(\mathbb{R}^+\times \mathbb{X}; \mathbb{Y})$, then $f \in \mathcal{C}_{\mathcal{B}}(\mathbb{R}\times \mathbb{X}; \mathbb{Y})$.
\end{lem}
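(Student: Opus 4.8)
The plan is to use the $\Z$-almost automorphy of $f$ to propagate the uniform equicontinuity from $\R^+$ to the whole real line, passing through the limit function $\tilde{f}$. Fix $\epsilon>0$. Since $f\in\mathcal{C}_{\mathcal{B}}(\R^+\times\X;\Y)$, there is $\delta>0$, depending only on $\epsilon$, such that for all $x_1,x_2\in\mathcal{B}$ with $\|x_1-x_2\|<\delta$ one has $\|f(t,x_1)-f(t,x_2)\|\le\epsilon$ for every $t\ge 0$. The goal is to show that this same $\delta$ works for every $t\in\R$; the crucial feature to keep track of throughout is that $\delta$ never depends on the time variable.

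First I would invoke Definition \ref{DefZAA} with the particular sequence $s_n=n$ (so that $s_n\to+\infty$): there exist a subsequence $\{n_k\}$ and a function $\tilde{f}$ such that, for each $t\in\R$ and uniformly for $x\in\mathcal{B}$,
$$\lim_{k\to\infty}f(t+n_k,x)=\tilde{f}(t,x),\qquad \lim_{k\to\infty}\tilde{f}(t-n_k,x)=f(t,x).$$
Next I would transfer the equicontinuity bound to $\tilde{f}$. Fix any $t\in\R$ and $x_1,x_2\in\mathcal{B}$ with $\|x_1-x_2\|<\delta$. Because $n_k\to+\infty$, for all sufficiently large $k$ we have $t+n_k\ge 0$, so the estimate on $\R^+$ gives $\|f(t+n_k,x_1)-f(t+n_k,x_2)\|\le\epsilon$. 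Letting $k\to\infty$ and using the first limit yields $\|\tilde{f}(t,x_1)-\tilde{f}(t,x_2)\|\le\epsilon$. Since $t\in\R$ was arbitrary and $\delta$ did not change, this shows that $\tilde{f}$ is uniformly equicontinuous on $\mathcal{B}$ over all of $\R$.

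Finally I would return to $f$ using the second limit. Fix $t\in\R$ and $x_1,x_2\in\mathcal{B}$ with $\|x_1-x_2\|<\delta$. For each $k$ the arguments $t-n_k$ lie in $\R$, where the bound for $\tilde{f}$ just established applies, so $\|\tilde{f}(t-n_k,x_1)-\tilde{f}(t-n_k,x_2)\|\le\epsilon$. Letting $k\to\infty$ and using $\lim_{k\to\infty}\tilde{f}(t-n_k,x)=f(t,x)$ gives $\|f(t,x_1)-f(t,x_2)\|\le\epsilon$ for every $t\in\R$, which is exactly the assertion $f\in\mathcal{C}_{\mathcal{B}}(\R\times\X;\Y)$ (running the argument with $\epsilon/2$ in place of $\epsilon$ at the outset recovers the strict inequality demanded in the definition). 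The only delicate point is the bookkeeping of $\delta$: it must be chosen once, independently of $t$, so that the modulus of continuity survives both passages to the limit, first onto $\tilde{f}$ on all of $\R$ and then back onto $f$; everything else is a routine $\epsilon$-argument.
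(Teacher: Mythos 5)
Your proof is correct and is essentially the argument the paper intends: the paper omits the proof of this lemma, deferring to \cite{ding2008asymptotically}, where exactly this translation trick is used --- shift by a sequence of integers tending to $+\infty$, pass the modulus of continuity from $f$ on $\R^+$ to $\tilde f$ on all of $\R$ via the first limit, then transfer it back to $f$ on all of $\R$ via the second limit. Your bookkeeping of $\delta$ (chosen once, independently of $t$) and the recovery of the strict inequality by starting from $\epsilon/2$ are both handled correctly.
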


\begin{thm}
Let $x \in \mathbb{Z} AAA(\mathbb{R}^+;\mathbb{X})$, $\mathcal{B}=\{ x(t), t \in \mathbb{R}^+\}$ be relatively compact in $\mathbb{X} $. If $f=f_a+h_0 \in  \Z AAA(\mathbb{R}^+\times \mathbb{X}; \mathbb{Y})\cap  \mathcal{C}_{\mathcal{B}}(\mathbb{R}^+\times \mathbb{X}; \mathbb{Y})$, where $f_a \in  \Z AA(\mathbb{R}\times \mathbb{X}; \mathbb{Y})$  and  $f_0\in C_0(\R^+\times \X;\Y)$, then $f(\cdot, x(\cdot))\in  \mathbb{Z} AAA(\mathbb{R}^+;\mathbb{Y})$.
\end{thm}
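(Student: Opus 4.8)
The plan is to imitate the classical decomposition argument for compositions of asymptotically almost automorphic functions, now carried out in the $\Z$-setting and resting on the composition Theorem \ref{ThmCompAA} already established for $\Z AA(\R\times\X;\Y)$. First I would fix the decompositions supplied by the hypotheses: by Theorem \ref{PropiZAA} the function $x$ splits uniquely as $x=x_1+x_2$ with $x_1\in\Z AA(\R;\X)$ and $x_2\in\Z C_0(\R^+;\X)$, and $f$ is given as $f=f_a+h_0$ with $f_a\in\Z AA(\R\times\X;\Y)$ and $h_0\in\Z_0C(\R^+\times\X;\Y)$. The natural candidate splitting of the composition is
\[
f(t,x(t)) = f_a(t,x_1(t)) + \Big(\,[f_a(t,x(t))-f_a(t,x_1(t))] + h_0(t,x(t))\,\Big),
\]
and the goal is to show the first summand lies in $\Z AA(\R;\Y)$ while the bracketed remainder lies in $\Z C_0(\R^+;\Y)$; the conclusion then follows at once from the direct sum decomposition $\Z AAA(\R^+;\Y)=\Z AA(\R;\Y)\oplus\Z C_0(\R^+;\Y)$ of Theorem \ref{PropiZAA}.

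For the first summand I would verify the hypotheses of Theorem \ref{ThmCompAA} for the pair $(f_a,x_1)$. Set $\mathcal{B}_1:=\{x_1(t):t\in\R\}$. By Theorem \ref{PropiZAA} we have $\mathcal{B}_1\subset\overline{\{x(t):t\in\R^+\}}=\overline{\mathcal{B}}$, which is compact since $\mathcal{B}$ is relatively compact. Because $f_a\in\mathcal{C}_{\mathcal{B}}(\R^+\times\X;\Y)$, the preceding Lemma upgrades this to $f_a\in\mathcal{C}_{\mathcal{B}}(\R\times\X;\Y)$; a standard closure argument (passing the uniform-in-$t$ modulus of continuity through limits and using continuity of $f_a$ in the second variable) then extends it to $f_a\in\mathcal{C}_{\overline{\mathcal{B}}}(\R\times\X;\Y)$, and in particular $f_a\in\mathcal{C}_{\mathcal{B}_1}(\R\times\X;\Y)$. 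Hence $f_a\in\Z AA(\R\times\X;\Y)\cap\mathcal{C}_{\mathcal{B}_1}(\R\times\X;\Y)$, so Theorem \ref{ThmCompAA} applies and yields $f_a(\cdot,x_1(\cdot))\in\Z AA(\R;\Y)$.

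For the remainder I would show that each piece vanishes at infinity while staying piecewise continuous. The term $h_0(t,x(t))$ satisfies $||h_0(t,x(t))||\le\sup_{y\in\mathcal{B}}||h_0(t,y)||\to 0$, directly from $h_0\in\Z_0C(\R^+\times\X;\Y)$ and the boundedness of $\mathcal{B}$. For the difference $f_a(t,x(t))-f_a(t,x_1(t))$, note that for $t\in\R^+$ both arguments $x(t)\in\mathcal{B}$ and $x_1(t)\in\mathcal{B}_1$ lie in the common compact set $\overline{\mathcal{B}}$, while $||x(t)-x_1(t)||=||x_2(t)||\to 0$. Thus, given $\epsilon>0$, the uniform-in-$t$ modulus of continuity of $f_a$ on $\overline{\mathcal{B}}$ provides $\delta>0$ with $||x_2(t)||<\delta\Rightarrow||f_a(t,x(t))-f_a(t,x_1(t))||<\epsilon$, and this holds for all sufficiently large $t$. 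Therefore the bracketed remainder tends to $0$ as $t\to+\infty$; being a sum of compositions of $PC_{\Z}$ functions it also belongs to $PC_{\Z}(\R^+;\Y)$, hence it lies in $\Z C_0(\R^+;\Y)$. Combining the two parts gives $f(\cdot,x(\cdot))\in\Z AAA(\R^+;\Y)$.

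The step I expect to be the main obstacle is the bookkeeping around the uniform continuity: one must ensure that the spatial modulus of continuity of $f_a$ is uniform in $t$ and valid on a \emph{single} compact set $\overline{\mathcal{B}}$ containing the ranges of both $x$ and $x_1$, since it is exactly this common $\delta$ that controls the difference term uniformly in time. The preceding Lemma (upgrading $\mathcal{C}_{\mathcal{B}}$ from $\R^+$ to $\R$) together with the range inclusion in Theorem \ref{PropiZAA} are what place these two ranges inside the same compact set on which $f_a$ is uniformly continuous; once this is secured, the remaining estimates are routine.
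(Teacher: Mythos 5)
Your overall strategy is the right one --- indeed it is essentially the route the paper intends, since the paper omits the proof of this theorem and defers to the analogous arguments of \cite{ding2008asymptotically}: split $x=x_1+x_2$ and $f=f_a+h_0$, apply Theorem \ref{ThmCompAA} to the pair $(f_a,x_1)$, and estimate the remainder $[f_a(\cdot,x(\cdot))-f_a(\cdot,x_1(\cdot))]+h_0(\cdot,x(\cdot))$ using the decay of $x_2$ and of $h_0$. The range inclusion $\mathcal{B}_1\subset\overline{\mathcal{B}}$ via Theorem \ref{PropiZAA}, the closure argument extending the spatial modulus of continuity to $\overline{\mathcal{B}}$, and the two vanishing estimates for the remainder are all correct.

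However, there is one genuine gap: you write ``Because $f_a\in\mathcal{C}_{\mathcal{B}}(\R^+\times\X;\Y)$\dots'' as if this were a hypothesis. It is not: the theorem assumes only that the \emph{sum} $f=f_a+h_0$ lies in $\mathcal{C}_{\mathcal{B}}(\R^+\times\X;\Y)$, and a uniform-in-$t$ spatial modulus of continuity does not pass automatically from $f$ to its principal term $f_a$. This transfer is precisely the two-variable analogue of item 4 of Theorem \ref{PropiZAA} and is the one place in the argument where a genuine idea is required. Concretely, for $x_1,x_2\in\mathcal{B}$ with $\Vert x_1-x_2\Vert<\delta$ one has
$$\Vert f_a(t,x_1)-f_a(t,x_2)\Vert\leq\Vert f(t,x_1)-f(t,x_2)\Vert+2\sup_{y\in\mathcal{B}}\Vert h_0(t,y)\Vert\, ,$$
which controls $f_a$ only for $t$ large, where the $h_0$-term is small; to obtain the modulus for \emph{all} $t$, one must use the $\Z$-almost automorphy of $f_a$: take an integer sequence $s_n\to+\infty$ along which $f_a(t+s_n,\cdot)\to\tilde{f}_a(t,\cdot)$ and $\tilde{f}_a(t-s_n,\cdot)\to f_a(t,\cdot)$ uniformly on $\mathcal{B}$, and pass the estimate above through these two limits (since $t+s_n$ is eventually large, $\tilde{f}_a(t,\cdot)$ inherits the modulus for every $t\in\R$, and then so does $f_a(t,\cdot)$). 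Note that this argument yields $f_a\in\mathcal{C}_{\mathcal{B}}(\R\times\X;\Y)$ directly, so your appeal to the preceding Lemma becomes superfluous. With this step inserted, your proof is complete; without it, the application of Theorem \ref{ThmCompAA} to $(f_a,x_1)$ and, more importantly, the uniform-in-$t$ control of the difference term $f_a(t,x(t))-f_a(t,x_1(t))$ are both unjustified, since both rest on a modulus of continuity for $f_a$ that has not been established.
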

%

\begin{thm}
Let $f \in \Z AAA(\mathbb{R}^+\times \mathbb{X}; \mathbb{Y})\cap  \mathcal{C}_{\mathcal{\mathbb{X}}}(\mathbb{R}^+\times \mathbb{X}; \mathbb{Y})$  and let $x\in \Z AAA(\mathbb{R}^+; \mathbb{X})$, then $f(\cdot, x(\cdot)) \in \Z AAA(\mathbb{R}^+; \mathbb{Y})$.
\end{thm}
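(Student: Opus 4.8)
The plan is to build the required decomposition of $f(\cdot,x(\cdot))$ by hand. Write $f=f_a+h_0$ with $f_a\in\Z AA(\R\times\X;\Y)$ and $h_0\in\Z_0C(\R^+\times\X;\Y)$, and $x=x_a+x_0$ with $x_a\in\Z AA(\R;\X)$ and $x_0\in\Z C_0(\R^+;\X)$. Since every element of $\Z AAA$ is bounded, the ranges $\{x(t):t\in\R^+\}$ and $\{x_a(t):t\in\R\}$ are bounded; fix a bounded set $\mathcal{B}\subset\X$ containing both. The natural candidate is
$$f(t,x(t))=\underbrace{f_a(t,x_a(t))}_{=:\,g(t)}+\underbrace{\big[f(t,x(t))-f_a(t,x_a(t))\big]}_{=:\,h(t)}\,,$$
and I would prove that $g\in\Z AA(\R;\Y)$ and $h\in\Z C_0(\R^+;\Y)$, so that $f(\cdot,x(\cdot))=g+h\in\Z AAA(\R^+;\Y)$.

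For the term $g$ I would apply Theorem \ref{ThmCompAA} to the pair $(x_a,f_a)$: as $x_a\in\Z AA(\R;\X)$, it suffices to know that $f_a\in\Z AA(\R\times\X;\Y)\cap\mathcal{C}_{\mathcal{B}}(\R\times\X;\Y)$, which then gives $g=f_a(\cdot,x_a(\cdot))\in\Z AA(\R;\Y)$. For the term $h$ I would split
$$h(t)=h_0(t,x(t))+\big[f_a(t,x(t))-f_a(t,x_a(t))\big]\,.$$
The first summand satisfies $||h_0(t,x(t))||\le\sup_{y\in\mathcal{B}}||h_0(t,y)||\to0$ as $t\to+\infty$ by the definition of $\Z_0C$; the second tends to $0$ because $f_a\in\mathcal{C}_{\mathcal{B}}$ is uniformly continuous in the second variable uniformly in $t$, while $||x(t)-x_a(t)||=||x_0(t)||\to0$. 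As each piece lies in $PC_{\Z}$, so does $h$, whence $h\in\Z C_0(\R^+;\Y)$; uniqueness of the decomposition (Theorem \ref{PropiZAA}) closes the argument.

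The main obstacle is the hypothesis $f_a\in\mathcal{C}_{\mathcal{B}}(\R\times\X;\Y)$ needed in both steps: a priori only $f=f_a+h_0$ is known to be uniformly continuous in the second variable (on $\R^+$, over all of $\X$), and the naive triangle inequality $||f_a(t,x_1)-f_a(t,x_2)||\le||h_0(t,x_1)||+||f(t,x_1)-f(t,x_2)||+||h_0(t,x_2)||$ leaves the terms $||h_0(t,x_i)||$, which are small only for large $t$. I would circumvent this using the almost automorphic structure: taking $s_n=n\to+\infty$ and a subsequence $\{\sigma_n\}$ along which the limits of Definition \ref{DefZAA} hold for $f_a$, the bound $h_0(t+\sigma_n,\cdot)\to0$ uniformly on $\mathcal{B}$ yields $f(t+\sigma_n,\cdot)\to\tilde{f}_a(t,\cdot)$; since each translate $f(\cdot+\sigma_n,\cdot)$ inherits the single modulus of continuity of $f$ (because $t+\sigma_n\ge0$ for $n$ large), passing to the limit gives $\tilde{f}_a\in\mathcal{C}_{\mathcal{B}}$, and the reverse limit $\tilde{f}_a(t-\sigma_n,\cdot)\to f_a(t,\cdot)$ propagates the property back to $f_a$. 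This is exactly the content of the preceding lemma, and once $f_a\in\mathcal{C}_{\mathcal{B}}(\R\times\X;\Y)$ is in hand the remaining estimates are routine, as in \cite{ding2008asymptotically}.
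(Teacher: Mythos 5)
Your proposal is correct, but there is no line-by-line comparison to make: the paper states this theorem \emph{without proof}, prefacing the whole subsection with the remark that the composition results are inspired by \cite{ding2008asymptotically} and that ``their proofs are very similar.'' Your argument supplies precisely the omitted proof, along the route the paper implicitly intends: split $f=f_a+h_0$ and $x=x_a+x_0$, obtain the principal term $g=f_a(\cdot,x_a(\cdot))\in \Z AA(\R;\Y)$ from Theorem \ref{ThmCompAA}, and check that the remainder $h(t)=h_0(t,x(t))+\big[f_a(t,x(t))-f_a(t,x_a(t))\big]$ tends to $0$ at infinity. Most importantly, you correctly isolated and resolved the one non-routine point: the hypothesis $f\in\mathcal{C}_{\X}(\R^+\times\X;\Y)$ concerns the full function on $\R^+$, whereas Theorem \ref{ThmCompAA} needs $f_a\in\mathcal{C}_{\mathcal{B}}(\R\times\X;\Y)$, and naive subtraction fails because $h_0(t,\cdot)$ is small only for large $t$. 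Your translation device (push to $+\infty$ along a subsequence realizing the $\Z AA$ limits of $f_a$ on $\mathcal{B}$, use $\sup_{y\in\mathcal{B}}\|h_0(t,y)\|\to 0$ to transfer the modulus of continuity of $f$ to $\tilde f_a$, then return to $f_a$ via the reverse limits) is exactly the standard argument from \cite{ding2008asymptotically}. One precision: this step is in fact slightly \emph{stronger} than the unlabeled Lemma preceding the theorem in the paper, since that Lemma assumes the $\Z AA$ function itself already lies in $\mathcal{C}_{\mathcal{B}}(\R^+\times\X;\Y)$ --- which for $f_a$ is precisely what is not known a priori --- so describing your step as ``exactly the content of the preceding lemma'' is a mild overstatement; your variant, starting from regularity of $f$ rather than of $f_a$, is what is actually required.

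Two cosmetic remarks. First, the appeal to uniqueness of the decomposition (Theorem \ref{PropiZAA}) is superfluous: exhibiting one decomposition $f(\cdot,x(\cdot))=g+h$ with $g\in\Z AA(\R;\Y)$ and $h\in\Z C_0(\R^+;\Y)$ already gives membership in $\Z AAA(\R^+;\Y)$ by definition. Second, the claim that ``each piece lies in $PC_{\Z}$'' deserves a line of justification, since $h_0$ and $f_a$ are only separately piecewise continuous in $t$ and continuous in $x$: for $t_m\to t_0$ in $[n,n+1)$ one estimates $\|h_0(t_m,x(t_m))-h_0(t_0,x(t_0))\|\le\|h_0(t_m,x(t_m))-h_0(t_m,x(t_0))\|+\|h_0(t_m,x(t_0))-h_0(t_0,x(t_0))\|$, controlling the first term by the $\mathcal{C}_{\mathcal{B}}$ modulus (available for $h_0=f-f_a$ once $f_a\in\mathcal{C}_{\mathcal{B}}$ is established) and the second by $h_0(\cdot,x(t_0))\in PC_{\Z}$, with the same scheme at left limits of integers. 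This is routine, but it is part of what the definition of $\Z C_0(\R^+;\Y)$ demands.
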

In particular, we have the following corollary
\begin{cor}
Let $f \in \Z AAA(\mathbb{R}^+\times \mathbb{X}; \mathbb{Y})$ be Lipschitz in the second variable and let $x\in \Z AAA(\mathbb{R}^+; \mathbb{X})$, then $f(\cdot, x(\cdot)) \in \Z AAA(\mathbb{R}^+; \mathbb{Y})$.
\end{cor}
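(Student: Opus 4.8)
The plan is to deduce this corollary directly from the preceding theorem, the only point being to verify that a function which is Lipschitz in its second variable automatically lies in the class $\mathcal{C}_{\mathbb{X}}(\mathbb{R}^+\times \mathbb{X}; \mathbb{Y})$. Once this membership is secured, the hypotheses of the preceding theorem are exactly met and its conclusion is precisely what we want.

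First I would record the Lipschitz hypothesis explicitly: there exists a constant $L\geq 0$ such that $\norm{f(t,x_1)-f(t,x_2)}_{\mathbb{Y}}\leq L\norm{x_1-x_2}_{\mathbb{X}}$ for every $t\in\mathbb{R}^+$ and all $x_1,x_2\in\mathbb{X}$. The crucial observation is that this single inequality, being uniform in $t$, delivers precisely the uniform-continuity-in-the-second-variable condition defining $\mathcal{C}_{\mathbb{X}}$. Indeed, given $\epsilon>0$, set $\delta:=\epsilon/(L+1)$; then $\norm{x_1-x_2}_{\mathbb{X}}<\delta$ forces $\norm{f(t,x_1)-f(t,x_2)}_{\mathbb{Y}}\leq L\delta<\epsilon$ for all $t\in\mathbb{R}^+$ simultaneously, the harmless shift from $L$ to $L+1$ merely accommodating the degenerate case $L=0$. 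Hence $f\in\mathcal{C}_{\mathbb{X}}(\mathbb{R}^+\times \mathbb{X}; \mathbb{Y})$.

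Combining this with the standing hypothesis $f\in\Z AAA(\mathbb{R}^+\times \mathbb{X}; \mathbb{Y})$ yields $f\in\Z AAA(\mathbb{R}^+\times \mathbb{X}; \mathbb{Y})\cap\mathcal{C}_{\mathbb{X}}(\mathbb{R}^+\times \mathbb{X}; \mathbb{Y})$, so the previous theorem applies verbatim with this $f$ and the given $x\in\Z AAA(\mathbb{R}^+; \mathbb{X})$, giving $f(\cdot,x(\cdot))\in\Z AAA(\mathbb{R}^+; \mathbb{Y})$. There is essentially no obstacle here: the composition theorem has already absorbed all the analytic difficulty, and the role of the corollary is simply to note that the Lipschitz condition is a convenient sufficient condition for the more abstract equicontinuity hypothesis $\mathcal{C}_{\mathbb{X}}$. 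The only mild subtlety worth flagging is that one must invoke the \emph{global} Lipschitz constant, valid on all of $\mathbb{X}$ rather than merely on the range of $x$, so that the membership holds with $\mathcal{B}=\mathbb{X}$, matching the subscript $\mathbb{X}$ required by the theorem.
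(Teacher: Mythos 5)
Your proposal is correct and matches the paper's (implicit) argument exactly: the paper offers this corollary as an immediate particular case of the preceding composition theorem, precisely because a global Lipschitz constant uniform in $t$ gives membership in $\mathcal{C}_{\mathbb{X}}(\mathbb{R}^+\times \mathbb{X}; \mathbb{Y})$, which is the verification you carry out. Your remark about needing the Lipschitz bound on all of $\mathbb{X}$ (so that $\mathcal{B}=\mathbb{X}$ in the definition of $\mathcal{C}_{\mathcal{B}}$) is exactly the right point of care.
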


%

\subsection{$\mathbb{Z}$-almost automorphic type functions and convolution products}
In this subsection, we prove invariance of the introduced discontinuous $\mathbb{Z}$-almost automorphic type functions under the action of convolution products. We start with the following Proposition

%
\begin{prop}\label{THF1}
Let $\Phi\in L^1(\R ;\R)$, then the operator $L$ defined by
$$(L f)(t)=\int_{-\infty}^{\infty}\Phi(t-s)f(s)ds =\Phi * f(t)\, ,$$
leaves invariant the space $\Z AA(\R;\X)$.
\end{prop}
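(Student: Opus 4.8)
The plan is to verify the three requirements for $Lf$ to belong to $\Z AA(\R;\X)$: that the convolution is well defined and bounded, that it lies in $PC_{\Z}(\R;\X)$, and that it satisfies the $\Z$-almost automorphy property of Definition \ref{ZAA}. For the first point, since $f\in\Z AA(\R;\X)$ is bounded by item $3)$ of Theorem \ref{finalthm}, the integrand $\Phi(t-s)f(s)$ is dominated by $\abs{\Phi(t-s)}\,\norm{f}_{\infty}$, which is integrable because $\Phi\in L^1(\R;\R)$; hence $(Lf)(t)$ is defined for every $t$ and $\norm{Lf}_{\infty}\leq \norm{\Phi}_{L^1}\norm{f}_{\infty}$. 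For the second point I would note that $Lf$ is in fact \emph{continuous}: writing $(Lf)(t)-(Lf)(t')=\int_{-\infty}^{\infty}\bigl[\Phi(t-s)-\Phi(t'-s)\bigr]f(s)\,ds$ and bounding by $\norm{f}_{\infty}\int_{-\infty}^{\infty}\abs{\Phi(t-s)-\Phi(t'-s)}\,ds$, the right-hand side tends to $0$ as $t'\to t$ by continuity of translation in $L^1$. A continuous function certainly belongs to $PC_{\Z}(\R;\X)$.

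The heart of the argument is the almost automorphy property. Given an integer sequence $\{s_n'\}\subset\Z$, I would invoke $f\in\Z AA(\R;\X)$ to extract a subsequence $\{s_n\}$ and a limit function $\tilde f$ for which the pointwise limits in $(\ref{limZ})$ hold; by item $4)$ of Theorem \ref{finalthm}, $\tilde f$ is bounded with $\sup_{t}\norm{\tilde f(t)}=\sup_{t}\norm{f(t)}$, so $L\tilde f$ is also well defined. The natural candidate for the transformed limit function is $L\tilde f$. Performing the substitution $u=s-s_n$ gives
$$(Lf)(t+s_n)=\int_{-\infty}^{\infty}\Phi(t+s_n-s)f(s)\,ds=\int_{-\infty}^{\infty}\Phi(t-u)\,f(u+s_n)\,du\, .$$
Since $f(u+s_n)\to\tilde f(u)$ pointwise in $u$ and the integrand is dominated by the fixed integrable function $\norm{f}_{\infty}\,\abs{\Phi(t-u)}$, the Dominated Convergence Theorem yields $\lim_{n\to\infty}(Lf)(t+s_n)=\int_{-\infty}^{\infty}\Phi(t-u)\tilde f(u)\,du=(L\tilde f)(t)$. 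The reverse limit $\lim_{n\to\infty}(L\tilde f)(t-s_n)=(Lf)(t)$ is obtained by the identical change of variables, using now the second limit in $(\ref{limZ})$ and the same integrable dominating function $\norm{\tilde f}_{\infty}\,\abs{\Phi(t-u)}=\norm{f}_{\infty}\,\abs{\Phi(t-u)}$.

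The only delicate step is the interchange of the limit with the integral, and this is precisely where the boundedness guaranteed by Theorem \ref{finalthm} is essential: without a uniform bound on $f$ (and on $\tilde f$) one could not produce the fixed integrable majorant needed for Dominated Convergence. This is a genuine advantage of the present framework over the definitions in which boundedness is not automatic. Combining the three points, $Lf$ is a continuous, bounded function satisfying the defining limits of Definition \ref{ZAA}, so $Lf\in\Z AA(\R;\X)$ and the operator $L$ leaves the space invariant.
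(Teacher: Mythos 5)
Your proof is correct, and while the almost--automorphy half coincides with the paper's argument, the first half takes a genuinely different and in fact sharper route. For the limits, the paper does exactly what you do: it sets $g=\Phi*\tilde f$ and obtains both limits in Definition \ref{ZAA} from the Dominated Convergence Theorem, with $\norm{f}_{\infty}\abs{\Phi(t-\cdot)}$ as the majorant, so there is nothing to compare there. The divergence is in proving $Lf\in PC_{\Z}(\R;\X)$: the paper never invokes continuity of translations in $L^1$; instead it decomposes $\int_{\R}$ as $\sum_{k}\int_{k}^{k+1}$, treats separately the cases $t_0=n\in\Z$ and $t_0\in(n,n+1)$, and isolates the jump points of $f$ inside small intervals $I_{\delta}(n,k)$ whose contribution is killed as $\delta\to 0$ -- a longer argument that only yields piecewise continuity, and whose final step (``since $\Phi*f$ is bounded, the limit $\lim_{\eta\to n^{-}}\Phi*f(\eta)$ exists'') is loose as stated, since boundedness alone does not produce lateral limits. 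Your translation-continuity argument bypasses all of this and gives more: the estimate $\norm{(Lf)(t)-(Lf)(t')}\leq\norm{f}_{\infty}\int_{\R}\abs{\Phi(t-t'+u)-\Phi(u)}\,du$ depends only on $t-t'$, so $Lf$ is in fact \emph{uniformly} continuous. Combined with the limits you proved and the characterization of Theorem \ref{charCaa}, this shows that $L$ actually maps $\Z AA(\R;\X)$ into $\mathcal{K}AA(\R;\X)$, a strictly stronger conclusion than invariance, which would in turn upgrade Theorem \ref{lastthhh}: the solution $u(t,\cdot)$ of the heat equation with $u_0\in\Z AA(\R;\R)\cap C(\R;\R)$ is compact almost automorphic for every $t>0$. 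The only point worth making explicit in your write-up is that $\tilde f$ is measurable (as a pointwise limit of the measurable functions $f(\cdot+s_n)$), so that $L\tilde f$ is a well-defined Bochner integral; boundedness of $\tilde f$, which you correctly take from item $4)$ of Theorem \ref{finalthm}, then makes your dominating function work.
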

\begin{proof}
Let $f\in \Z AA(\R;\X)$.

\begin{enumerate}
\item First, let us prove that $\Phi * f$ belongs to the space $PC_{\Z}(\R; \X)$. Note that, for $t_0\in \mathbb{R}$ we have
\begin{eqnarray}
\Phi * f(t)-\Phi * f(t_0)&=& \int_{\mathbb{R}}\Phi(\eta)\big{(} f(t-\eta)-f(t_0-\eta) \big{)}d\eta \nonumber \\
&=& \sum_{k\in \mathbb{Z}}\int_k^{k+1}\Phi(\eta)\big{(} f(t-\eta)-f(t_0-\eta) \big{)}d\eta\, . \label{NeWEeQq}
\end{eqnarray}

\noindent $a)$ For $t_0=n \in \mathbb{Z}$ and a sequence $\{t_m \} \subset [n,n+1)$ such that $t_m\to n^{+}$ when $m\to \infty$, we have
\begin{eqnarray*}
\int_k^{k+1}\Phi(\zeta)\big{(} f(t_m-\zeta)-f(n-\zeta) \big{)}d\zeta &=& \int_{n-k-1}^{n-k}\Phi(n-\zeta)\big{(} f(t_m-n+\zeta)-f(\zeta) \big{)}d\eta\, .
\end{eqnarray*}
Then, since we have the pointwise limit $\lim_{m\to \infty}\big{(} f(t_m-n+\zeta)-f(\zeta)\big{)} = 0$ in $[n-k-1,n-k)$, $f$ is bounded and $\Phi \in L^1(\mathbb{R},\mathbb{R})$, we conclude
$$\lim_{m\to \infty}\int_{n-k-1}^{n-k}\Phi(n-\zeta)\big{(} f(t_m-n+\zeta)-f(\zeta) \big{)}d\eta=0\, .$$

\noindent $b)$ For $t_0\in (n, n+1),\, n \in \mathbb{Z}$. Pick a sequence $\{t_m \} \subset (n,n+1)$ such that $t_m\to t_0$ when $m\to \infty$. Also, for $\delta >0$ arbitrarily small, denote by $I_{\delta}(n,k)$ the open interval $I_{\delta}(n,k) := (n-k-\delta, n-k+\delta)$ and by $\bigtriangleup f(t_m,t_0,\zeta)$ the difference  $\bigtriangleup f(t_m,t_0,\zeta):= f(t_m-t_0+\zeta)-f(\zeta)$, then
\begin{eqnarray*}
\int_k^{k+1}\Phi(\zeta)\big{(} f(t_m-\zeta)-f(t_0-\zeta) \big{)}d\zeta &=& 
\int_{t_0-k-1}^{t_0-k}\Phi(t_0-\zeta) \bigtriangleup f(t_m,t_0,\zeta) d\zeta \\
 &=& \int_{t_0-k-1}^{n-k-\delta}\Phi(t_0-\zeta) \bigtriangleup f(t_m,t_0,\zeta) d\zeta + \\
&+& \int_{n-k+\delta}^{t_0+k}\Phi(t_0-\zeta) \bigtriangleup f(t_m,t_0,\zeta) d\zeta +\\
&+&  \int_{I_{\delta}(n,k)}\Phi(t_0-\zeta) \bigtriangleup f(t_m,t_0,\zeta) d\zeta \, .\\
\end{eqnarray*}
From this decomposition, we have (analogously to the step $a)$)
$$\lim_{m\to \infty}\int_{t_0-k-1}^{n-k-\delta}\Phi(t_0-\zeta) \bigtriangleup f(t_m,t_0,\zeta) d\zeta =0\, ,$$

$$\lim_{m\to \infty} \int_{n-k+\delta}^{t_0+k}\Phi(t_0-\zeta) \bigtriangleup f(t_m,t_0,\zeta) d\zeta =0\, ,$$
and, since 
\begin{eqnarray*}
\big{|}\int_{I_{\delta}(n,k)}\Phi(t_0-\zeta) \bigtriangleup f(t_m,t_0,\zeta) d\zeta\big{|} &\leq &  2||f||_{\infty} \int_{\mathbb{R}}|\Phi(t_0-\zeta)|\mathds{1}_{I_{\delta}(m,k)}(\zeta) d\zeta\, ;
\end{eqnarray*} 
where, $\mathds{1}_{I_{\delta}(m,k)}(\cdot)$ is the indicator function of $I_{\delta}(m,k)$, then $$\lim_{\delta \to 0} \int_{I_{\delta}(n,k)}\Phi(t_0-\zeta) \bigtriangleup f(t_m,t_0,\zeta) d\zeta=0\, .$$

Therefore, from $a)$, $b)$ and (\ref{NeWEeQq}) we conclude that the convolution $\Phi * f$ is continuous in each interval $[n,n+1),\, \, n \in \mathbb{Z}$. Also, since $\Phi * f$ is bounded, the limit $\lim_{\eta \to n^{-}}\Phi * f(\eta)$ exists, therefore 

$$\Phi * f(n^{-}):=\lim_{\eta \to n^{-}}\Phi * f(\eta)\, ,$$
is well defined. Thus, $\Phi * f\in PC_{\Z}(\R; \X)$.

%
%
%

\item Let $\{s_n'\}$ be a sequence in $\Z$; then, there exists a subsequence $\{s_n\}\subset \{s_n'\}$ and a function $\tilde{f}$ such that the following pointwise limits hold
\begin{equation*}\label{EEqq001}
\lim_{n\to \infty}f(t+\eta_n)=\tilde{f}(t)\, ,\, \, \, 
\lim_{n\to \infty}\tilde{f}(t-\eta_n)=f(t)\, .
\end{equation*}
Now, defining the following function
$$g(t):=\int_{-\infty}^{\infty}\Phi(t-s)\tilde{f}(s)ds\, ,$$
we have 
$$Lf(t+s_n)-g(t)=\int_{-\infty}^{\infty}\Phi(\zeta) \Big{(}f (t-\zeta + s_n)-\tilde{f}(t-\zeta) \Big{)} d\zeta\, .$$
Therefore, using the Lebesgue's Dominated Convergence Theorem, we conclude the following pointwise limit
$$\lim_{n\to \infty}Lf(t+s_n)=g(t)\, .$$
Analogously, we obtain
$$\lim_{n\to \infty}g(t-s_n)=Lf(t)\, .$$
\end{enumerate}

\end{proof}
%
The proofs of the following propositions are omitted, because they are deduced from the previous one and also because they are very similar to the almost automorphic situation; see for instance  \cite{CHAKPPINTO2023BBMSS,chavez2021almost,ding2008asymptotically}.
\begin{prop}\label{THF2}
Let $\Phi \in L^1([0,\infty);\R)$, then the operator $L$ defined by 
$$(L f)(t)=\int_{-\infty}^{t}\Phi(t-s)f(s)ds\, ,$$
leaves invariant the space $\Z AA(\R;\X)$.
\end{prop}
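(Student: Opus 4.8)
The plan is to reduce this statement directly to Proposition \ref{THF1} by extending the kernel by zero to the whole real line. Concretely, I would define $\tilde{\Phi}:\R\to\R$ by $\tilde{\Phi}(u)=\Phi(u)$ for $u\geq 0$ and $\tilde{\Phi}(u)=0$ for $u<0$. Since $\Phi\in L^1([0,\infty);\R)$, the extension satisfies $\tilde{\Phi}\in L^1(\R;\R)$, with $\|\tilde{\Phi}\|_{L^1(\R)}=\|\Phi\|_{L^1([0,\infty))}$, so $\tilde{\Phi}$ is an admissible kernel for Proposition \ref{THF1}.

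The key observation is that the causal convolution defining $L$ coincides with the two-sided convolution against the extended kernel $\tilde{\Phi}$. Indeed, for any $f\in\Z AA(\R;\X)$ and every $t\in\R$, the factor $\tilde{\Phi}(t-s)$ vanishes precisely when $s>t$, so the domain of integration is automatically truncated to $(-\infty,t]$ and
$$\int_{-\infty}^{\infty}\tilde{\Phi}(t-s)f(s)\,ds=\int_{-\infty}^{t}\Phi(t-s)f(s)\,ds=(Lf)(t)\, .$$
Hence $Lf=\tilde{\Phi}*f$. Because $\tilde{\Phi}\in L^1(\R;\R)$ and $f\in\Z AA(\R;\X)$, Proposition \ref{THF1} applies verbatim and yields $Lf\in\Z AA(\R;\X)$, which is exactly the desired invariance.

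There is essentially no obstacle in this approach: the only points to verify are that the zero extension remains integrable (it does, with unchanged $L^1$ norm) and that this extension correctly restricts the integral to $(-\infty,t]$, both of which are immediate. As an alternative, one could repeat the two-step argument of Proposition \ref{THF1} directly on the causal integral — first establishing $Lf\in PC_{\Z}(\R;\X)$ through the same splitting of each $\int_k^{k+1}$ together with the boundedness of $f$ and dominated convergence, and then exhibiting the limit function $g(t)=\int_{-\infty}^{t}\Phi(t-s)\tilde{f}(s)\,ds$ via Lebesgue's dominated convergence theorem. This direct route works but merely duplicates the earlier computation, so the reduction above is preferable and explains why the proof is omitted in the text.
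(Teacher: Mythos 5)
Your reduction is correct and is precisely what the paper intends when it says the proof "is deduced from the previous one": extending $\Phi$ by zero to all of $\R$ preserves the $L^1$ norm and turns the causal integral into the two-sided convolution of Proposition \ref{THF1}. Nothing further is needed.
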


\begin{prop}\label{THF201}
Let $\Phi \in L^1([0,\infty);\R)$, such that
\begin{enumerate}
\item $$\lim_{t \to \infty}\int_t^{+\infty}|\Phi(s)|ds =0\, .$$
\item There exists $r_0>0$ such that for every $r>r_0$ we have
$$\lim_{t \to \infty}\int_0^{r}|\Phi(t-s)|ds =0\, .$$
\end{enumerate} 
Then, the operator $L$ defined by 
$$(L f)(t)=\int_{0}^{t}\Phi(t-s)f(s)ds\, ,$$
leaves invariant the space $\Z AAA(\R^+;\X)$.
\end{prop}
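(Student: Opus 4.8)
The plan is to exploit the direct sum decomposition $\Z AAA(\R^+;\X)=\Z AA(\R;\X)\oplus \Z C_0(\R^+;\X)$ from Theorem \ref{PropiZAA}. Writing $f=g+h$ with $g\in \Z AA(\R;\X)$ and $h\in \Z C_0(\R^+;\X)$, I would produce an explicit $\Z AA$-part of $Lf$ and show that the remainder vanishes at infinity. The natural candidate for the automorphic part is the full-line causal convolution
$$G(t):=\int_{-\infty}^{t}\Phi(t-s)g(s)\,ds\, ,$$
which belongs to $\Z AA(\R;\X)$ by Proposition \ref{THF2} (here it is essential that $g$ is defined, and bounded, on all of $\R$). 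It then suffices to prove that $Lf-G$, restricted to $\R^+$, lies in $\Z C_0(\R^+;\X)$.

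A direct computation, splitting $G$ at $s=0$, gives for $t\ge 0$
$$Lf(t)-G(t)=\underbrace{\int_{0}^{t}\Phi(t-s)h(s)\,ds}_{=:A(t)}\;-\;\underbrace{\int_{-\infty}^{0}\Phi(t-s)g(s)\,ds}_{=:B(t)}\, .$$
Membership in $PC_{\Z}(\R^+;\X)$ is inherited: $G\in \Z AA(\R;\X)\subset PC_{\Z}(\R;\X)$, while $Lf\in PC_{\Z}(\R^+;\X)$ by the very same piecewise-continuity argument used in step $1$ of the proof of Proposition \ref{THF1}, so $Lf-G\in PC_{\Z}(\R^+;\X)$. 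Hence everything reduces to showing $A(t)\To 0$ and $B(t)\To 0$ as $t\to+\infty$.

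For $B$, the change of variables $u=t-s$ yields $B(t)=\int_{t}^{\infty}\Phi(u)g(t-u)\,du$, so using that $g$ is bounded (Theorem \ref{finalthm}) together with hypothesis $(1)$ one gets $||B(t)||\le ||g||_{\infty}\int_{t}^{\infty}|\Phi(u)|\,du\To 0$. The delicate term is $A$, and controlling it is the main obstacle, since the crude bound $||A(t)||\le ||h||_{\infty}||\Phi||_{L^1}$ does not decay; here the decay of $h$ at infinity and the two structural hypotheses on $\Phi$ must cooperate. Given $\eps>0$, I would first pick $r>r_0$ large enough that $||h(s)||<\eps$ for all $s\ge r$ (possible since $h\in \Z C_0(\R^+;\X)$), and then split
$$A(t)=\int_{0}^{r}\Phi(t-s)h(s)\,ds+\int_{r}^{t}\Phi(t-s)h(s)\,ds\, .$$
The first integral is bounded by $||h||_{\infty}\int_{0}^{r}|\Phi(t-s)|\,ds$, which tends to $0$ as $t\to\infty$ precisely by hypothesis $(2)$; the second is bounded, after the substitution $u=t-s$, by $\eps\int_{0}^{t-r}|\Phi(u)|\,du\le \eps\,||\Phi||_{L^1}$. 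Letting $t\to\infty$ and then $\eps\to 0$ gives $A(t)\To 0$. Combining the two estimates shows $Lf-G\in \Z C_0(\R^+;\X)$, whence $Lf=G+(Lf-G)\in \Z AAA(\R^+;\X)$, as required.
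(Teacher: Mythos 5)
Your proof is correct and takes precisely the route the paper intends: the paper omits this proof, saying only that it is ``deduced from the previous one'' (i.e.\ Proposition \ref{THF2}) and parallels the almost automorphic case in the cited references, which is exactly your decomposition $Lf = G + (Lf-G)$ with $G(t)=\int_{-\infty}^{t}\Phi(t-s)g(s)\,ds$ the full-line causal convolution of the $\Z AA$-part. Your verification that the remainder $A(t)-B(t)$ lies in $\Z C_0(\R^+;\X)$ — using hypothesis (1) (or the $L^1$ tail) for $B$, and hypothesis (2) together with the decay and boundedness of $h$ for $A$ — correctly supplies the details the paper leaves out.
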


In particular, we have (see \cite[Lemma 2.6]{ding2008asymptotically} for the case $ AAA(\R^{+};\X)$)
\begin{cor} Let $R(t)_{t\geq 0}$ be a family of continuous linear operators on $\X$ such that $||R(t)||\leq M e^{-\omega t} $ for all $t\geq 0$ and some fixed positive constants $M, \omega$. Then, the operator $L$ defined by 
$$(L f)(t)=\int_{0}^{t}R(t-s)f(s)ds\, ,\, \, \, t\geq 0\, ,$$
leaves invariant the space $\Z AAA(\R^+;\X)$.
\end{cor}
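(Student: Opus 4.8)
The plan is to reduce the operator-valued statement to the scalar Proposition \ref{THF201} by treating the exponential bound as a dominating scalar kernel. First I would set $\varphi(t):=Me^{-\omega t}$ for $t\geq 0$ and observe that $\varphi\in L^1([0,\infty);\R)$ with $\|R(t)\|\leq\varphi(t)$, and that $\varphi$ satisfies hypotheses $(1)$ and $(2)$ of Proposition \ref{THF201}: indeed $\int_t^{\infty}\varphi(s)\,ds=(M/\omega)e^{-\omega t}\to 0$, and for any fixed $r>0$,
$$\int_0^r \varphi(t-s)\,ds=\frac{M}{\omega}\,e^{-\omega t}\,(e^{\omega r}-1)\To 0\quad(t\to\infty).$$
Thus every estimate in the scalar proof in which $|\Phi(t-s)|$ appears can be carried out with $\|R(t-s)\|\leq\varphi(t-s)$ in its place.

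Next I would run the decomposition argument. Write $f=g+h$ with $g\in\Z AA(\R;\X)$ and $h\in\Z C_0(\R^+;\X)$, so that
$$(Lf)(t)=\int_0^t R(t-s)g(s)\,ds+\int_0^t R(t-s)h(s)\,ds.$$
For the $g$-term I would introduce the full-line operator convolution $G(t):=\int_{-\infty}^t R(t-s)g(s)\,ds=\int_0^{\infty}R(u)g(t-u)\,du$, which is well defined because $g$ is bounded on all of $\R$ and $\int_0^{\infty}\|R(u)\|\,du<\infty$. The identity
$$\int_0^t R(t-s)g(s)\,ds=G(t)-\int_t^{\infty}R(u)g(t-u)\,du$$
splits the $g$-term into $G$ plus a remainder whose norm is at most $\|g\|_{\infty}\int_t^{\infty}\varphi(u)\,du\to 0$; since this remainder is also piecewise continuous (being a difference of functions in $PC_{\Z}$), it lies in $\Z C_0(\R^+;\X)$. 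For the $h$-term, splitting $\int_0^t=\int_0^r+\int_r^t$ and using hypothesis $(2)$ for $\varphi$ on the head together with the decay of $h$ on the tail shows $\int_0^t R(t-s)h(s)\,ds\to 0$, so it too lies in $\Z C_0(\R^+;\X)$. Hence $Lf=G+(\text{terms in }\Z C_0)$, and it remains only to see $G\in\Z AA(\R;\X)$.

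The main step, and the one I expect to require the most care, is establishing the operator-kernel analogue of Proposition \ref{THF2}, namely $G\in\Z AA(\R;\X)$. Since $R(t)$ is defined only on $[0,\infty)$, the scalar result cannot be invoked verbatim; instead I would reproduce the proof of Proposition \ref{THF1}, replacing the scalar factor $\Phi(\eta)$ by the operator $R(u)$ and the majorant $|\Phi|$ by $\varphi$ throughout. Concretely, piecewise continuity of $G$ on each $[n,n+1)$ and existence of the left limits $G(n^-)$ follow from dominated convergence applied to $G(t)-G(t_0)=\int_0^{\infty}R(u)\big(g(t-u)-g(t_0-u)\big)\,du$, whose integrand is dominated by $2\|g\|_{\infty}\varphi(u)$; and, given a sequence in $\Z$, after extracting $\{s_n\}$ with $g(\cdot+s_n)\to\tilde g$ and $\tilde g(\cdot-s_n)\to g$ pointwise, the function $\tilde G(t):=\int_0^{\infty}R(u)\tilde g(t-u)\,du$ satisfies $G(t+s_n)\to\tilde G(t)$ and $\tilde G(t-s_n)\to G(t)$ by the Lebesgue Dominated Convergence Theorem, again with dominating function $2\|g\|_{\infty}\varphi(u)$ (using $\|\tilde g\|_{\infty}=\|g\|_{\infty}$ from Theorem \ref{finalthm}). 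This yields $G\in\Z AA(\R;\X)$, and together with the previous paragraph we conclude $Lf\in\Z AA(\R;\X)\oplus\Z C_0(\R^+;\X)=\Z AAA(\R^+;\X)$, as desired.
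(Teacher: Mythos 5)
Your proof is correct and follows essentially the route the paper intends: the corollary is presented there as a particular case of Proposition \ref{THF201} with the proof omitted (the paper cites the analogous almost automorphic arguments in the literature), and your argument supplies exactly those omitted details --- verifying that the scalar majorant $Me^{-\omega t}$ satisfies hypotheses (1) and (2), decomposing $f=g+h$, and rerunning the dominated-convergence proofs of Propositions \ref{THF1}--\ref{THF201} with $\Vert R(\cdot)\Vert$ in place of $|\Phi(\cdot)|$. Your explicit handling of the operator-valued kernel (which the scalar statement of Proposition \ref{THF201} does not literally cover) addresses a point the paper passes over silently, and is a welcome clarification rather than a deviation.
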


\section{Some consequences}\label{consequences}


\subsection{Extension of almost automorphic sequences to compact almost automorphic functions: A simple proof}

A classical problem within the theory of almost automorphic functions is the following: given an almost automorphic sequence $S:\Z \to \X$,  is there an almost automorphic function $f:\R \to \X$ such that $f|_{\Z}=S$?. This problem is known as the extension problem of almost  automorphic sequences to almost automorphic functions defined in $\R$ and was answered positively by Cuevas et. al. \cite{cuevas2012existence}, Fink \cite{fink1969extensions} and R. Yuan \cite{yuan2010favard}.

In this subsection, using the theory of $\Z$-almost automorphic functions we give a very simple proof of the extension Theorem for almost automorphic sequences. The Theorem can be viewed as a  characterization result of almost automorphic sequences.

\begin{thm}\label{lastth}
Let $S:\Z \to \X$ be an almost automorphic sequence, then there exists a compact almost automorphic function $f:\R \to \X$ such that $f|_{\Z}=S $\, .
\end{thm}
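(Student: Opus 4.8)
The plan is to exhibit $f$ explicitly as the piecewise linear interpolant of $S$ and then reduce everything to the new characterization in Theorem \ref{charCaa}, which is precisely what makes the argument short. Concretely, I would define $f:\R\to\X$ by
$$f(t)=S(n)+(t-n)\big(S(n+1)-S(n)\big),\qquad t\in[n,n+1],\ n\in\Z.$$
The two branches agree at each integer endpoint, so $f$ is well defined and continuous on all of $\R$, and clearly $f|_{\Z}=S$. Hence the whole problem collapses to showing $f\in\mathcal{K}AA(\R;\X)$, and by Theorem \ref{charCaa} it is enough to verify that $f$ is uniformly continuous and $\Z$-almost automorphic.

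For uniform continuity I would use Theorem \ref{Bseq01} to get $M:=\sup_{n\in\Z}\|S(n)\|<\infty$; then every slope satisfies $\|S(n+1)-S(n)\|\le 2M$, so $f$ is Lipschitz with constant $2M$ and in particular $f\in UC(\R;\X)$. For $\Z$-almost automorphy, note first that continuity already places $f$ in $PC_{\Z}(\R;\X)$, so only the pointwise limits of Definition \ref{ZAA} must be checked. Given an integer sequence $\{s_n'\}$, I would invoke that $S$ is an almost automorphic sequence to extract a subsequence $\{s_n\}$ and a limit sequence $\tilde S:\Z\to\X$ with $S(k+s_n)\to\tilde S(k)$ and $\tilde S(k-s_n)\to S(k)$ for every $k\in\Z$, and then let $\tilde f$ be the piecewise linear interpolant of $\tilde S$ built by the same formula.

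The key algebraic observation is that an integer shift leaves the fractional part unchanged: for $t\in[m,m+1)$ and $s_n\in\Z$ we have $[t+s_n]=m+s_n$ and $t+s_n-[t+s_n]=t-m$, so
$$f(t+s_n)=S(m+s_n)+(t-m)\big(S(m+1+s_n)-S(m+s_n)\big).$$
Letting $n\to\infty$ yields $f(t+s_n)\to \tilde S(m)+(t-m)\big(\tilde S(m+1)-\tilde S(m)\big)=\tilde f(t)$, and the symmetric computation with $\tilde f$ and $-s_n$ gives $\tilde f(t-s_n)\to f(t)$; thus $f\in\Z AA(\R;\X)$. Combining this with the Lipschitz bound and Theorem \ref{charCaa} gives $f\in\Z AA(\R;\X)\cap UC(\R;\X)=\mathcal{K}AA(\R;\X)$ with $f|_{\Z}=S$, as required. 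I do not expect a genuine obstacle here: the only delicate point is interchanging the limit with the interpolation formula, and this is immediate because on each interval $[m,m+1]$ the value of $f(\cdot+s_n)$ depends only on the two samples $S(m+s_n)$ and $S(m+1+s_n)$. The real content of the proof is the reduction afforded by Theorem \ref{charCaa}, which lets us replace arbitrary real translation numbers by integer ones and makes the grid-compatible interpolation work transparently.
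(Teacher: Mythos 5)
Your proposal is correct and follows essentially the same route as the paper: the same piecewise linear interpolant of $S$, the same verification of the pointwise limits via the invariance of the fractional part under integer shifts, and the same reduction to Theorem \ref{charCaa} through uniform continuity. The only cosmetic difference is that you obtain uniform continuity from a global Lipschitz bound with constant $2M$, whereas the paper runs an explicit $\epsilon$--$\delta$ case analysis at the integer breakpoints; both arguments are sound.
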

\begin{proof}
Let us define the following function
\begin{equation}
f(t):=S(k)+(t-k)(S(k+1)-S(k)) ,\, \, \, k\leq t < k+1 \, .
\end{equation}
We claim that $f\in \Z AA(\R; \X)$. Indeed, let $\{s_n\}$ be a sequence of integer numbers. Then, there exists a subsequence $\{\eta_n\}$ of $\{s_n\}$ and a discrete function $\tilde{S}$ such that the following pointwise limits are satisfied

\begin{equation}\label{EEqq01}
\lim_{n\to \infty}S(k+\eta_n)=\tilde{S}(k)\, ,
\end{equation}

\begin{equation}\label{EEqq02}
\lim_{n\to \infty}\tilde{S}(k-\eta_n)=S(k)\, .
\end{equation}
Now, if we define the function  $\tilde{f}(t)=\tilde{S}(k)+(t-k)(\tilde{S}(k+1)-\tilde{S}(k))$ for $k\leq t <k+1$, then we must show that $f(t+\eta_n)\to \tilde{f}(t)$ and that $\tilde{f}(t-\eta_n)\to f(t)$. 
First, let us observe that if $k\leq t < k+1$, then $k+\eta_n \leq t +\eta_n < k+1 + \eta_n$, then
$$f(t+\eta_n)=S(k+\eta_n)+(t+\eta_n-(k+\eta_n))(S(k+1 +\eta_n)-S(k+\eta_n))$$
$$=S(k+\eta_n)+(t-k)(S(k+1 +\eta_n)-S(k+\eta_n))\, ,$$
from this equality and by the limit in (\ref{EEqq01}) we have
$$\lim_{n \to \infty}f(t+\eta_n)=\tilde{f}(t)\, .$$
Analogously, if $k\leq t < k+1$, then $k-\eta_n \leq t -\eta_n < k+1 - \eta_n$; then
$$\tilde{f}(t-\eta_n)=S(k-\eta_n)+(t-\eta_n-(k-\eta_n))(S(k+1 -\eta_n)-S(k-\eta_n))$$
$$=S(k-\eta_n)+(t-k)(S(k+1 -\eta_n)-S(k-\eta_n))\, .$$
Therefore, using the limit in (\ref{EEqq02}) we obtain 

$$\lim_{n \to \infty}\tilde{f}(t-\eta_n)=f(t)\, .$$

Now, let us prove that $f$ is uniformly continuous. Since $S$ is bounded, let $M>\sup_{n \in \Z}||S(n)||$ with $M>1$; then, given $\epsilon >0$ there  exists $\delta < \dfrac{\epsilon}{4M}$ such that if $|t_1-t_2|<\delta$, then,
\begin{enumerate}
\item If $t_1,t_2 \in [k,k+1[$, we have
\begin{eqnarray*}
||f(t_1)-f(t_2)||&=&||(t_1-t_2)(S(k+1)-S(k))||\\
&\leq & |t_1-t_2|2M\\
& < & \dfrac{\epsilon}{2M} < \epsilon\, .
\end{eqnarray*}
\item If $k<t_1<k+1<t_2<k+2$, then $|t_2-t_1|<\delta$ implies $|t_2-(k+1)|<\delta$; and, since
$$f(t_1)-f(t_2)=(t_1-t_2)(S(k+1)-S(k))-(t_2-(k+1))(S(k+2)-S(k))\, ,$$
we have
\begin{eqnarray*}
||f(t_1)-f(t_2)||&=&|t_1-t_2|||(S(k+1)-S(k))||\\
&+ & |t_2-(k+1)||S(k+2)-S(k)||\\
& < & \delta 4M < \epsilon\, .
\end{eqnarray*}
\end{enumerate}

\end{proof}
The reader may compare the proof of this Theorem with those given for \cite[Theorem 2.1]{cuevas2012existence}, \cite[Theorem 1]{fink1969extensions} and \cite[Theorem 2.8]{yuan2010favard}.

In addition to the previous Theorem of continuous extension, we can also provide the following discontinuous extension Theorem:

\begin{thm}
Let $S:\Z \to \X$ be an almost automorphic sequence, then there exists a function  $f\in \Z AA(\R; \X)$ such that $f|_{\Z}=S$\, .
\end{thm}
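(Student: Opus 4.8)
The plan is to construct the simplest possible piecewise extension: define $f$ on each half-open interval $[k,k+1)$ to be the constant value $S(k)$, i.e.
\begin{equation*}
f(t):=S(k)\, ,\quad k\leq t<k+1\, ,\ k\in\Z\, .
\end{equation*}
By construction $f|_{\Z}=S$, and $f$ is trivially continuous on each $[k,k+1)$ (being constant there), with $\lim_{t\to k^-}f(t)=S(k-1)$ existing for every $k$; hence $f\in PC_{\Z}(\R;\X)$ immediately. The entire content of the theorem is then to verify that this $f$ is $\Z$-almost automorphic, which is where the almost automorphy of the sequence $S$ is used.

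The key step is to show $f\in\Z AA(\R;\X)$ directly from Definition \ref{ZAA}. Given an arbitrary sequence $\{s_n'\}\subset\Z$, I would invoke discrete almost automorphy of $S$ to extract a subsequence $\{\eta_n\}\subset\{s_n'\}$ and a limit sequence $\tilde{S}:\Z\to\X$ satisfying the pointwise limits $\lim_{n\to\infty}S(k+\eta_n)=\tilde{S}(k)$ and $\lim_{n\to\infty}\tilde{S}(k-\eta_n)=S(k)$ for all $k\in\Z$. Then I would define the candidate limit function by the same piecewise-constant recipe, $\tilde{f}(t):=\tilde{S}(k)$ for $k\leq t<k+1$. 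The verification of the two required pointwise limits in (\ref{limZ}) is now essentially immediate and this is the simplification over the continuous case in Theorem \ref{lastth}: for fixed $t$ with $k\leq t<k+1$, since $\eta_n\in\Z$ we have $k+\eta_n\leq t+\eta_n<k+1+\eta_n$, so $f(t+\eta_n)=S(k+\eta_n)\to\tilde{S}(k)=\tilde{f}(t)$; symmetrically $\tilde{f}(t-\eta_n)=\tilde{S}(k-\eta_n)\to S(k)=f(t)$. Thus $f\in\Z AA(\R;\X)$ and the proof is complete.

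I do not expect any real obstacle here, which is precisely the point of stating this as a companion to Theorem \ref{lastth}: dropping the requirement of continuity (equivalently, of compact almost automorphy) removes the need for the uniform-continuity estimates that occupy the bulk of the previous proof, since the piecewise-constant extension lands only in $\Z AA(\R;\X)$ and is genuinely discontinuous at the integers. The only point worth a word of care is bookkeeping with the integer translations: because $\eta_n\in\Z$, a point $t$ and its translate $t+\eta_n$ always lie in the same relative position within their respective unit intervals, so the integer part $k=[t]$ is simply shifted to $k+\eta_n$, and the piecewise-constant values track the sequence exactly. This is exactly what fails for non-integer translations and is why Definition \ref{ZAA} restricts to $\{s_n'\}\subset\Z$.
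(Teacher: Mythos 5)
Your proof is correct and takes essentially the same approach as the paper: the paper's entire proof is to set $f(t)=S([t])$, which is precisely your piecewise-constant extension, with the verification left implicit. Your spelled-out check that $f\in PC_{\Z}(\R;\X)$ and that the limits in Definition \ref{ZAA} hold (using that integer shifts preserve the fractional part, so $[t+\eta_n]=[t]+\eta_n$) is exactly the routine argument the paper omits.
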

\begin{proof}
We define the function $f:\R \to \X$ through $f(t)=S([t])$.
\end{proof}


\subsection{Examples of compact almost automorphic functions which are not almost periodic}
Theorem \ref{lastth} motivates the following examples
\begin{ejm}
Let us consider the almost automorphic sequence $\{x(n)\}$; then, its linear extension $f$ given by Theorem \ref{lastth} is compact almost automorphic and is not almost periodic.
\end{ejm}

\begin{ejm}
Let us consider the almost automorphic sequence $\{x(n)\}$, and the almost automorphic function $f:\mathbb{R}\to \mathbb{R}$. Let $F:\mathbb{R}\to \mathbb{R}$ be the continuous and piecewise smooth function defined, for $n \in \mathbb{Z}$, by

$$
F(t)= \left\{ \begin{array}{lcc} x(n)+2(t-n)\big{(} f(n+1/2)-x(n) \big{)} & if & t \in [n,n+1/2) \\ \\ f(n+1/2)+2(t-n-1/2)\big{(} x(n+1)-f(n+1/2)  \big{)} & if & t \in [n+1/2,n+1) . \\
   \end{array} \right.
$$
Then, $F$ is uniformly continuous and also belongs to the space $\mathbb{Z} AA(\mathbb{R}; \mathbb{R})$. It follows from Theorem \ref{charCaa} that $F$ is compact almost automorphic, moreover it is not almost periodic. Note also that, besides of Theorem \ref{lastth}, the function $F$ in the present example give us another continuous (compact) almost automorphic extension of the discrete almost automorphic function $\{x(n)\}$.
\end{ejm}

\subsection{Compact almost automorphic solutions of DEPCA}

For $p\in \{1,2, \cdots \}$, let us consider the following DEPCAs on $\mathbb{C}^p$ 

\begin{equation}\label{DEPCA}
y'(t)=A(t)y(t)+B(t)y([t])+f(t,y(t), y([t]))\, ,
\end{equation}

\begin{equation}\label{DEPCALin}
     	y'(t)=A(t)y(t)+B(t)y([t])+f(t);
     \end{equation}
Equations (\ref{DEPCA})  and (\ref{DEPCALin}) were studied by A. Ch\'avez and coautors in \cite{chavez2014discon} in the space of almost automorphic functions and by H.S. Ding and co-authors in \cite{ding2017asymptotically} in the space of asymptotically almost automorphic functions, in both works, the principal ingredients are the notions of exponential dichotomy and Bi-almost automorphic functions. We recommend referring to works \cite{chavez2014discon,ding2017asymptotically} for an exploration of these important concepts.

Taking into account the following more general nonlinear differential equation with discontinuous argument
\begin{equation}\label{GenDEPCA}
y'(t)=g(t,y(t),y([t]))\, ,
\end{equation}
we are able to define what we understood by a solution of a DEPCA
	\begin{defi}
		 A function $y(\cdot)$ is a solution of the DEPCA (\ref{GenDEPCA}) in the interval $I$, if it satisfies the following conditions:
 	 \begin{enumerate}
		 	\item $y(\cdot)$ is continuous in all $I$.
		 	\item $y(\cdot)$ is differentiable in all $I$, except possibly in the points $n\in I\cap \Z$  where there should be a lateral derivative.
		 	\item $y(\cdot)$ satisfies the equation in all the intervals $(n,n+1), \; n\in \Z$ as well as is satisfies by the right side derivative in each $n\in \Z$.
		 \end{enumerate}
	\end{defi}

The following proposition is also presented in \cite[Lemma 4.1]{chavez2014discontinuous}. Here, we clarified its proof.
\begin{prop}\label{propUC}
Let $A(\cdot), B(\cdot)$ and $f(\cdot)$ be bounded and locally integrable functions. Then, every bounded solution of DEPCA  (\ref{DEPCALin}) is uniformly continuous.
\end{prop}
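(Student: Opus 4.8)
The plan is to exploit the fact that on each unit interval the deviating argument $[t]$ is frozen, so that the DEPCA (\ref{DEPCALin}) reduces to an ordinary linear equation with a right-hand side I can bound uniformly, and then to promote this uniform bound on the derivative to a global Lipschitz estimate by using the continuity of the solution across the integers.

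First I would fix a bounded solution $y$ on the interval $I$, write $C:=\sup_{t}\|y(t)\|<\infty$, and let $a,b,\mu$ be uniform bounds for $\|A(\cdot)\|$, $\|B(\cdot)\|$ and $\|f(\cdot)\|$, which exist by hypothesis. On any interval $(n,n+1)$ with $n\in\Z$ one has $[t]=n$, so the equation becomes
$$y'(t)=A(t)y(t)+B(t)y(n)+f(t),\qquad t\in(n,n+1),$$
and hence, wherever $y'$ exists,
$$\|y'(t)\|\leq a\|y(t)\|+b\|y(n)\|+\mu\leq (a+b)C+\mu=:L.$$
The decisive point is that $L$ does not depend on $n$: it is a bound for the derivative of $y$ valid on all of $I$ away from the integers.

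Next I would upgrade this to a Lipschitz estimate. Since $A,B,f$ are locally integrable and $y$ is continuous (hence locally bounded), the right-hand side above is locally integrable, so on each $[n,n+1)$ the solution satisfies $y(t)=y(n)+\int_n^t\big[A(s)y(s)+B(s)y(n)+f(s)\big]\,ds$; this shows $y$ is absolutely continuous on each such interval, and because $y$ is continuous at the integers it is absolutely continuous on every compact subinterval of $I$. Consequently, for $t_1<t_2$ in $I$,
$$\|y(t_2)-y(t_1)\|=\Big\|\int_{t_1}^{t_2}y'(s)\,ds\Big\|\leq\int_{t_1}^{t_2}\|y'(s)\|\,ds\leq L\,|t_2-t_1|,$$
so $y$ is Lipschitz with constant $L$, and therefore uniformly continuous.

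The only genuine subtlety — the step I would treat most carefully — is the passage across the integer points, where $[t]$ jumps and $y'$ is generally discontinuous. This is harmless precisely because a solution is, by definition, continuous on all of $I$: the jumps occur in the derivative, not in $y$ itself, so the uniform bound $\|y'\|\le L$ integrates across the integers with no boundary contribution. Equivalently, one may telescope $y(t_2)-y(t_1)$ over the integers lying between $t_1$ and $t_2$, bounding each piece by $L$ times its length and gluing via continuity at the endpoints; either way the global Lipschitz constant is the interval-independent bound $L$ found in the first step.
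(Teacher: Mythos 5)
Your proposal is correct and follows essentially the same route as the paper's proof: bound $\|y'\|$ uniformly on the intervals $(n,n+1)$ using the boundedness of $A$, $B$, $f$ and of the solution itself, then convert this derivative bound into a uniform modulus of continuity, gluing across the integer points via the continuity of $y$. The only cosmetic difference is that you integrate $y'$ to get a global Lipschitz constant (which automatically handles several integers between $t_1$ and $t_2$), whereas the paper uses the mean value theorem inside each interval and a two-term telescoping estimate across a single integer; the substance is identical.
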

\begin{proof}
Let $y$ be a bounded solution of DEPCA (\ref{DEPCALin}); then, there exists $M>0$ such that
	$$M=\ds\sup_{t\in \R}\big|A(t)y(t)+B(t)y([t])+f(t)\big|\, .$$
Also, $\forall t\in I_{k}=(k,k+1)$ with $\; k\in \Z$, we have $|y'(t)|\leq M.$ Now, let $\varepsilon>0$, there exists $ \delta=\dfrac{\varepsilon}{2M}>0$ such that: if $t_{1},t_{2}\in I_{k}$, with $|t_{1}-t_{2}|<\dfrac{\varepsilon}{2M},\; $ then
	
	\begin{align}
		|y(t_{1})-y(t_{2})|&=|y'(\xi)(t_{1}-t_{2})| 
		<\varepsilon.
	\end{align}
On the other hand, if $t_{1}<k<t_{2},$ with $k\in\Z$ we have (using the  continuity of $y$) 
	
	\begin{align}
|y(t_{1})-y(t_{2})|&\leq|y(t_{1})-y( {k})|+|y( {k})-y(t_{2})|\nonumber\\
		&\leq \ds\lim_{\eta\to k^{-}}|y(t_{1})-y( {\eta})|+\ds\lim_{\varphi\to k^{+}}|y( {\varphi})-y(t_{2})|\nonumber\\
		&\leq \ds\lim_{\eta\to k^{-}}|\ds\int_{t_{1}}^{\eta}y'(t)dt|+\ds\lim_{\varphi\to k^{+}}|\int_{\varphi}^{t_{2}}y'(t)dt|\nonumber\\
		&\leq \ds\lim_{\eta\to k^{-}}\ds\int_{t_{1}}^{\eta}|y'(t)dt|+\ds\lim_{\varphi\to k^{+}}\int_{\varphi}^{t_{2}}|y'(t)dt|\nonumber\\
		&\leq M(k-t_{1})+M(t_{2}-k)<\varepsilon\, .
	\end{align}
\end{proof}


Let $y$ be a solution of (\ref{DEPCALin}) and $\Phi(t)$ a fundamental matrix solution of the system $y'(t)=A(t)y(t)$. Then, using the variation of constant formula, we observe that the solution $y$  for  $ t\in [n;n+1[,\, n \in \Z$, satisfies:
   
     \begin{equation}\label{2222Int}
     	y(t)=\bigg(\Phi(t,n)+\int_{n}^{t}\Phi(t,u) B(u)du \bigg)y(n)+\int_{n}^{t}\Phi(t,u)f(u)du,
     \end{equation}
     
     where $\Phi(t,s)=\Phi(t)\Phi^{-1}(s)$. Since $y$ is continuous, taking the limit  $t\to (n+1)^{-}$ in (\ref{2222Int}), we arrive to  the difference equation     
     \begin{equation}
        y(n+1)=C(n)y(n)+h(n),\; n\in\Z,
     \end{equation}
where
\begin{equation}\label{Coeffn}
C(n)=\Phi(n+1,n)+\int_{n}^{n+1}\Phi(n+1,u)B(u)du,\, \,  h(n)=\int_{n}^{n+1}\Phi(n+1,u)f(u)du.
\end{equation}

As it was mentioned in \cite{chavez2014discon}, a solution $y = y(t)$ of the DEPCA (\ref{DEPCALin}) is defined on $\R$ if and only if the matrix
	\begin{equation}
		I+\ds\int_{\tau}^{t}\Phi(\tau,u)B(u)du,
	\end{equation}
is invertible for $t,\tau\in[n,n+1], n\in \Z$, where $I$ is the identity matrix.
%
%
%
%
%
%
%
%
%
%
%

Now, using Proposition \ref{propUC} and item $5$ in Theorem \ref{PropiZAA}, we improve \cite[Theorem 3.6]{chavez2014discon} as follows 
\begin{thm} Let $A(t), B(t)$ be almost automorphic matrices, $f\in \mathbb{Z} AA(\mathbb{R}; \mathbb{C}^p) $ and suppose that the system $y(n+1)=C(n)y(n), n \in \mathbb{Z}$, where $C(n)$ is given in  (\ref{Coeffn}) has a Bi-almost automorphic exponential dichotomy with parameters $(\alpha, K,P)$. Then equation (\ref{DEPCALin}) has a unique compact  almost automorphic solution.
\end{thm}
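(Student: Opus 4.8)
The plan is to combine the existence-and-uniqueness result already furnished by the exponential dichotomy machinery with the new characterization of compact almost automorphic functions provided by Theorem \ref{charCaa}; the whole novelty over \cite[Theorem 3.6]{chavez2014discon} is that this characterization converts a regularity statement into a structural one. First I would establish that, under the Bi-almost automorphic exponential dichotomy with parameters $(\alpha, K, P)$, equation (\ref{DEPCALin}) admits a \emph{unique bounded} solution $y$ belonging to $\mathbb{Z} AA(\mathbb{R}; \mathbb{C}^p)$. Concretely, one reduces the problem at the integers to the difference equation $y(n+1)=C(n)y(n)+h(n)$ with $C(n),h(n)$ as in (\ref{Coeffn}); the dichotomy produces a unique bounded solution sequence $\{y(n)\}$, the Bi-almost automorphy of the associated Green's function forces $\{y(n)\}$ to be an almost automorphic sequence, and the variation-of-constants representation (\ref{2222Int}) reconstructs a continuous function $y$ on all of $\mathbb{R}$. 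The invariance of $\mathbb{Z} AA(\mathbb{R};\mathbb{X})$ under convolution-type operators (Propositions \ref{THF1} and \ref{THF2}) keeps the integral terms in (\ref{2222Int}), hence $y$ itself, inside $\mathbb{Z} AA(\mathbb{R};\mathbb{C}^p)$.

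Once $y\in\mathbb{Z} AA(\mathbb{R};\mathbb{C}^p)$ is in hand, the decisive and genuinely new step is to upgrade its regularity. By item 3 of Theorem \ref{finalthm}, $y$ is bounded; the coefficients $A(\cdot),B(\cdot)$ are almost automorphic, hence bounded, continuous and therefore locally integrable, while $f\in\mathbb{Z} AA(\mathbb{R};\mathbb{C}^p)$ is bounded and piecewise continuous, hence also locally integrable. Thus every hypothesis of Proposition \ref{propUC} is met, and that proposition yields that $y$ is uniformly continuous. Invoking Theorem \ref{charCaa}, which identifies $\mathcal{K}AA(\mathbb{R};\mathbb{C}^p)$ with $\mathbb{Z} AA(\mathbb{R};\mathbb{C}^p)\cap UC(\mathbb{R};\mathbb{C}^p)$, I conclude immediately that $y$ is compact almost automorphic. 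Uniqueness is inherited from the first step: since $\mathcal{K}AA(\mathbb{R};\mathbb{C}^p)\subset\mathbb{Z} AA(\mathbb{R};\mathbb{C}^p)$ and every such function is bounded, there can be at most one bounded $\mathbb{Z}$-almost automorphic solution, and we have exhibited one that is moreover compact almost automorphic.

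I expect the main obstacle to lie in the first step rather than in the regularity upgrade. One must verify that the classical dichotomy construction survives the weakening of the forcing term from continuous almost automorphic to merely $\mathbb{Z}$-almost automorphic, that is, that the discrete solution remains an almost automorphic sequence and that the function reconstructed through (\ref{2222Int}) is genuinely $\mathbb{Z}$-almost automorphic and not just piecewise continuous and bounded; this is exactly where the Bi-almost automorphy hypothesis and the convolution-invariance Propositions \ref{THF1}--\ref{THF2} do the real work. By contrast, the passage from $\mathbb{Z} AA$ to $\mathcal{K}AA$ is short and clean, amounting to a one-line application of Proposition \ref{propUC} followed by Theorem \ref{charCaa}. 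It is precisely this last, almost costless, implication that sharpens the earlier ``almost automorphic'' conclusion of \cite[Theorem 3.6]{chavez2014discon} into the stronger ``compact almost automorphic'' statement.
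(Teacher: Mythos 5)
Your proposal is correct and takes essentially the same route as the paper: the paper likewise treats existence and uniqueness as furnished by the dichotomy machinery of \cite[Theorem 3.6]{chavez2014discon}, and its entire contribution is exactly the upgrade you describe, namely boundedness plus Proposition \ref{propUC} to obtain uniform continuity, followed by the characterization $\mathcal{K}AA(\mathbb{R};\mathbb{C}^p)=\mathbb{Z}AA(\mathbb{R};\mathbb{C}^p)\cap UC(\mathbb{R};\mathbb{C}^p)$ of Theorem \ref{charCaa}. The only divergence is that you sketch a re-derivation of the existence step (where, incidentally, Propositions \ref{THF1}--\ref{THF2} do not literally apply, since the integrals in (\ref{2222Int}) run over finite intervals rather than being genuine convolutions), whereas the paper simply invokes that earlier theorem as a black box.
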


\noindent We say that $f:\mathbb{R}\times \mathbb{C}^p \times \mathbb{C}^p \to \mathbb{C}^p$ is $L_f$-Lipschitz, if there exists a positive constant $L_f>0$, such that
$$|f(t,x,y)-f(t,z,w)| \leq L_f(|x-y|+|z-w|), \forall t \in \mathbb{R},\,  \forall\, \,  (x, y),(z, w) \in \mathbb{C}^p \times \mathbb{C}^p\, . $$
Now, we improve \cite[Theorem 3.8]{chavez2014discon} as follows

\begin{thm} Let $A(t), B(t)$ be almost automorphic matrices, $f\in \mathbb{Z} AA(\mathbb{R} \times \mathbb{C}^p \times \mathbb{C}^p; \mathbb{C}^p) $ and $L_f$-Lipschitz; moreover suppose that the system $y(n+1)=C(n)y(n), n \in \mathbb{Z}$, where $C(n)$ is given in  (\ref{Coeffn}) has a Bi-almost automorphic exponential dichotomy with parameters $(\alpha, K,P)$. Then, there exists a constant $M^* >0$ such that, if $0<L_f<M^*$, then equation (\ref{DEPCA}) has a unique compact  almost automorphic solution.
\end{thm}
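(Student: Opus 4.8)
The plan is to build the solution by a fixed point argument that lives directly in the compact almost automorphic class $\mathcal{K}AA(\mathbb{R};\mathbb{C}^p)$, so that compactness is obtained at once rather than recovered afterwards; the constant $M^*$ will be exactly the threshold that makes the relevant map a contraction. I would work on the complete space $\mathcal{K}AA(\mathbb{R};\mathbb{C}^p)$ (a closed subspace of $AA(\mathbb{R};\mathbb{C}^p)$). Given $\phi\in\mathcal{K}AA(\mathbb{R};\mathbb{C}^p)$, set $F_\phi(t):=f(t,\phi(t),\phi([t]))$. Since $\phi\in AA(\mathbb{R};\mathbb{C}^p)$, the example following Definition \ref{ZAA} gives $\phi([\cdot])\in\mathbb{Z}AA(\mathbb{R};\mathbb{C}^p)$, while $\phi(\cdot)\in\mathbb{Z}AA(\mathbb{R};\mathbb{C}^p)$; a nested subsequence extraction as in item 1 of Theorem \ref{finalthm} shows $t\mapsto(\phi(t),\phi([t]))\in\mathbb{Z}AA(\mathbb{R};\mathbb{C}^{2p})$. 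The Lipschitz hypothesis places $f$ in $\mathcal{C}_{\mathbb{C}^p\times\mathbb{C}^p}(\mathbb{R}\times\mathbb{C}^p\times\mathbb{C}^p;\mathbb{C}^p)$, so the composition Theorem \ref{ThmCompAA} yields $F_\phi\in\mathbb{Z}AA(\mathbb{R};\mathbb{C}^p)$. Using $F_\phi$ as the forcing term of the linear DEPCA (\ref{DEPCALin}), the preceding (linear) theorem produces a unique compact almost automorphic solution, which I denote $\Gamma(\phi)$; this defines a map $\Gamma:\mathcal{K}AA(\mathbb{R};\mathbb{C}^p)\to\mathcal{K}AA(\mathbb{R};\mathbb{C}^p)$.

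The core of the proof is the contraction estimate. Since the solution operator of (\ref{DEPCALin}) is linear in the forcing, $\Gamma(\phi_1)-\Gamma(\phi_2)$ is the bounded solution corresponding to the forcing $F_{\phi_1}-F_{\phi_2}$, and the pointwise Lipschitz bound gives $\|F_{\phi_1}-F_{\phi_2}\|_\infty\le 2L_f\|\phi_1-\phi_2\|_\infty$. Representing the bounded solution through the discrete Green function attached to the exponential dichotomy of $y(n+1)=C(n)y(n)$ together with the variation-of-constants formula (\ref{2222Int}), I would bound $\|\Gamma(\phi_1)-\Gamma(\phi_2)\|_\infty\le \theta\,\|\phi_1-\phi_2\|_\infty$ with a modulus $\theta=2L_f\,C$, where $C$ collects the dichotomy parameters $(\alpha,K,P)$, $\sup_t\|B(t)\|$ and the bound of the fundamental matrix. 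Choosing $M^*:=1/(2C)$, for $0<L_f<M^*$ one has $\theta<1$, so $\Gamma$ is a contraction on the complete space $\mathcal{K}AA(\mathbb{R};\mathbb{C}^p)$ and the Banach fixed point theorem furnishes a unique fixed point $y$, which is precisely the unique compact almost automorphic solution of (\ref{DEPCA}).

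Alternatively, one may simply invoke \cite[Theorem 3.8]{chavez2014discon}, which already yields the threshold $M^*$ and a unique almost automorphic solution $y$, and then upgrade: freezing the nonlinearity as $g(t):=f(t,y(t),y([t]))$, the function $y$ is a bounded solution of the linear DEPCA (\ref{DEPCALin}) with forcing $g$; here $g$ is bounded, since $\|g(t)\|\le\|f(\cdot,0,0)\|_\infty+2L_f\|y\|_\infty$ with $f(\cdot,0,0)\in\mathbb{Z}AA(\mathbb{R};\mathbb{C}^p)$ bounded by item 3 of Theorem \ref{finalthm}, and it is locally integrable, being piecewise continuous, so Proposition \ref{propUC} gives that $y$ is uniformly continuous, whence $y\in\mathbb{Z}AA(\mathbb{R};\mathbb{C}^p)\cap UC(\mathbb{R};\mathbb{C}^p)$ and Theorem \ref{charCaa} gives $y\in\mathcal{K}AA(\mathbb{R};\mathbb{C}^p)$. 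In both routes uniqueness within $\mathcal{K}AA(\mathbb{R};\mathbb{C}^p)$ is inherited from uniqueness within $AA(\mathbb{R};\mathbb{C}^p)$, because $\mathcal{K}AA\subset AA$. I expect the contraction estimate — in particular pinning down the quantitative dependence of $C$ on $(\alpha,K,P)$ that locates $M^*$ — to be the only laborious part; the step promoting an almost automorphic solution to a compact almost automorphic one, which is the genuinely new contribution, is short and rests only on Proposition \ref{propUC} and the characterization Theorem \ref{charCaa}.
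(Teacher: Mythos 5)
Your proposal is correct, and its two routes relate to the paper as follows. The ``alternative'' you sketch at the end is precisely the paper's own proof: the paper gives no standalone argument for this theorem, stating it as an improvement of Theorem 3.8 of \cite{chavez2014discon}, where the improvement consists exactly of your upgrade step --- freeze the nonlinearity as $g(t)=f(t,y(t),y([t]))$, observe that the almost automorphic solution $y$ supplied by the cited theorem is then a bounded solution of the linear equation (\ref{DEPCALin}) with bounded, locally integrable forcing, apply Proposition \ref{propUC} to get uniform continuity, and conclude $y\in\mathcal{K}AA(\mathbb{R};\mathbb{C}^p)$ from Theorem \ref{charCaa}, uniqueness in $\mathcal{K}AA$ being inherited from uniqueness in $AA$. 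Your primary route --- the contraction $\Gamma$ on $\mathcal{K}AA(\mathbb{R};\mathbb{C}^p)$ built from the linear theorem and the composition result --- is genuinely different: it re-proves existence rather than inheriting it, which buys a self-contained argument with an explicit threshold $M^{*}=1/(2C)$, at the cost of redoing the dichotomy/Green's-function estimate that produces $C$, i.e.\ exactly the work the paper outsources to \cite{chavez2014discon}. Both routes are sound; two details deserve a sentence each in a polished write-up: (i) piecewise continuity (hence local integrability) of $g$ is not automatic from $f\in PC^{1}_{\mathbb{Z}}$ alone, but follows by combining the Lipschitz bound in the spatial variables with the continuity of $f(\cdot,x)$ on each $[n,n+1)$; (ii) identifying $\Gamma(\phi_1)-\Gamma(\phi_2)$ with the unique bounded solution driven by $F_{\phi_1}-F_{\phi_2}$ tacitly uses that the exponential dichotomy excludes nontrivial bounded solutions of the homogeneous problem.
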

Analogously, the conclusion of \cite[Theorem 4.2]{ding2017asymptotically} can be improved in such a way that the solution be asymptotically compact almost automorphic.

Finally, if we consider the following model with piecewise constant argument:
\begin{equation}\label{EeQqmodel}
y'(t)=-\delta(t)y(t)+p(t)f(y([t]))\, ,
\end{equation}
where $\delta(\cdot)$ and $ p(\cdot)$ are positive almost automorphic functions, $0<\delta_{-}=\inf_{s\in \mathbb{R}}\delta(s)$ and $f$ is a positive $\gamma$-Lipschitz function, then \cite[Theorem 4.2]{chavez2014discon} becomes

\begin{thm} In the above conditions, for $\gamma$ sufficiently small, equation (\ref{EeQqmodel}) has a unique compact almost
automorphic solution.
\end{thm}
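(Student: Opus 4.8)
The plan is to recognize equation (\ref{EeQqmodel}) as a scalar ($p=1$) special case of the general nonlinear DEPCA (\ref{DEPCA}) and then invoke the improved version of \cite[Theorem 3.8]{chavez2014discon} established above (equivalently, the compact almost automorphic upgrade of \cite[Theorem 4.2]{chavez2014discon}). To this end I set $A(t)=-\delta(t)$, $B(t)\equiv 0$ and $g(t,u,v):=p(t)f(v)$, so that (\ref{EeQqmodel}) reads $y'(t)=A(t)y(t)+B(t)y([t])+g(t,y(t),y([t]))$. Since $\delta$ and $p$ are almost automorphic, $A$ and $B$ are almost automorphic (scalar) matrices, and this first block of hypotheses is immediate.

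Next I would verify the structural hypotheses on the forcing term $g$. As $f$ is $\gamma$-Lipschitz it is continuous and bounded on bounded sets, so for any bounded $\mathcal{B}\subset\mathbb{C}$ we have $\sup_{v\in\mathcal{B}}|f(v)|=:M_{\mathcal{B}}<\infty$. Given an integer sequence $\{s_n'\}$, extract a subsequence $\{s_n\}$ along which $p(t+s_n)\to\tilde p(t)$ pointwise (possible since $p\in AA(\mathbb{R};\mathbb{R})\subset\mathbb{Z}AA(\mathbb{R};\mathbb{R})$); then $|g(t+s_n,u,v)-\tilde p(t)f(v)|\le|p(t+s_n)-\tilde p(t)|\,M_{\mathcal{B}}\to 0$ uniformly for $v\in\mathcal{B}$, and analogously for the reverse limit, so $g\in\mathbb{Z}AA(\mathbb{R}\times\mathbb{C}\times\mathbb{C};\mathbb{C})$. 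Moreover $|g(t,u_1,v_1)-g(t,u_2,v_2)|=|p(t)|\,|f(v_1)-f(v_2)|\le\|p\|_\infty\gamma\,|v_1-v_2|$, so $g$ is $L_g$-Lipschitz with $L_g=\|p\|_\infty\gamma$.

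It remains to produce the Bi-almost automorphic exponential dichotomy for the discrete system $y(n+1)=C(n)y(n)$. Because $B\equiv 0$, the formula (\ref{Coeffn}) collapses to $C(n)=\Phi(n+1,n)=\exp\!\left(-\int_n^{n+1}\delta(r)\,dr\right)$, whence the transition products satisfy $\big|\prod_{j=m}^{n-1}C(j)\big|=\exp\!\left(-\int_m^{n}\delta(r)\,dr\right)\le e^{-\delta_-(n-m)}$ for $n\ge m$, using $\delta_-=\inf_{s}\delta(s)>0$. This is exactly an exponential dichotomy with the whole space stable, constant projection $P\equiv 1$, rate $\alpha=\delta_-$ and $K=1$; its Bi-almost automorphy reduces to checking that $n\mapsto\int_n^{n+1}\delta(r)\,dr$ (hence $C(n)$ and the associated Green function) is a discrete almost automorphic sequence, which follows from the almost automorphy of $\delta$.

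Finally, the improved theorem furnishes a threshold $M^*>0$ (depending on the dichotomy parameters $(\delta_-,1,1)$) such that $L_g<M^*$ guarantees a unique compact almost automorphic solution; choosing $\gamma<M^*/\|p\|_\infty$ forces $L_g=\|p\|_\infty\gamma<M^*$, and the conclusion follows. I expect the only delicate point to be the verification that the coefficient sequence $C(n)$ and its Green function are Bi-almost automorphic --- i.e. that integrating the almost automorphic function $\delta$ over the unit intervals $[n,n+1]$ produces a discrete almost automorphic sequence --- together with the bookkeeping that identifies the smallness threshold $M^*$ explicitly in terms of $\delta_-$.
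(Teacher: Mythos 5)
Your proposal is correct: the identification $A(t)=-\delta(t)$, $B\equiv 0$, $g(t,u,v)=p(t)f(v)$ is legitimate, the $\mathbb{Z}$-almost automorphy and Lipschitz bound $L_g=\|p\|_{\infty}\gamma$ for $g$ are verified properly, the formula (\ref{Coeffn}) does collapse to $C(n)=\exp\big(-\int_n^{n+1}\delta(r)\,dr\big)$, and the resulting Green function is Bi-almost automorphic with exponential decay, giving dichotomy data $(\alpha,K,P)=(\delta_-,1,1)$; the threshold $\gamma<M^*/\|p\|_{\infty}$ is then exactly the ``sufficiently small'' in the statement. However, your route is not the paper's. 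The paper offers no verification of hypotheses at all: it obtains the theorem by taking the model-specific result \cite[Theorem 4.2]{chavez2014discon} --- which already provides a unique \emph{almost automorphic} solution of (\ref{EeQqmodel}) for small $\gamma$ --- and upgrading its conclusion: that solution is bounded, hence uniformly continuous by Proposition \ref{propUC} (one views $t\mapsto p(t)f(y([t]))$ as a bounded, locally integrable forcing term so that the proposition for the linear equation (\ref{DEPCALin}) applies), and then the characterization $\mathcal{K}AA(\mathbb{R};\mathbb{X})=\mathbb{Z}AA(\mathbb{R};\mathbb{X})\cap UC(\mathbb{R};\mathbb{X})$ of Theorem \ref{charCaa} (or Theorem \ref{CharCAA}) promotes it to compact almost automorphic. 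Your proof instead specializes the improved version of \cite[Theorem 3.8]{chavez2014discon} stated just above the statement in question, in effect reconstructing within the paper the derivation that \cite{chavez2014discon} carries out to get its Theorem 4.2 from its general nonlinear theorem. What your route buys: self-containedness (modulo the improved general theorem, which the paper states without proof) plus explicit dichotomy constants and an explicit smallness condition on $\gamma$. What the paper's route buys: brevity, and it isolates the single genuinely new ingredient contributed here --- the regularity upgrade ``bounded DEPCA solution $\Rightarrow$ uniformly continuous $\Rightarrow$ compact almost automorphic'' --- without re-proving existence and uniqueness.
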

In particular, we have
\begin{cor} Let $\gamma$ be small enough. Then, the piecewise constant delayed Lasota–Wazewska model:
\begin{equation}\label{EeQqmodelx}
y'(t)=-\delta(t)y(t)+p(t)e^{-\gamma y([t])}\, ,
\end{equation}
has a unique compact almost automorphic solution.
\end{cor}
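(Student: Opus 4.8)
The plan is to read the piecewise constant delayed Lasota--Wazewska equation (\ref{EeQqmodelx}) as the particular instance of the abstract model (\ref{EeQqmodel}) in which the nonlinearity is $f(u)=e^{-\gamma u}$, and then to deduce the statement directly from the preceding theorem. Thus essentially all the work reduces to checking that $f(u)=e^{-\gamma u}$ meets the two structural requirements imposed on the nonlinearity in (\ref{EeQqmodel}), namely positivity and the $\gamma$-Lipschitz condition. Since the coefficients $\delta(\cdot)$ and $p(\cdot)$ in (\ref{EeQqmodelx}) are already assumed to be positive almost automorphic functions with $\delta_{-}=\inf_{s}\delta(s)>0$, no further hypotheses on them are needed.

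First I would note that $f(u)=e^{-\gamma u}>0$ for every $u$, so positivity is immediate. Next, on the relevant domain (the nonnegative real axis, where the biologically meaningful solutions live) I would estimate the Lipschitz constant via the mean value theorem: for $u_1,u_2\geq 0$ there is $\xi$ between them with
$$|e^{-\gamma u_1}-e^{-\gamma u_2}|=\gamma e^{-\gamma\xi}|u_1-u_2|\leq \gamma|u_1-u_2|,$$
because $\xi\geq 0$ forces $e^{-\gamma\xi}\leq 1$. Hence $f$ is $\gamma$-Lipschitz, with Lipschitz constant bounded by $\gamma$. With positivity and the $\gamma$-Lipschitz property in hand, the hypotheses of the previous theorem are fulfilled and its conclusion yields a unique compact almost automorphic solution for $\gamma$ sufficiently small.

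The point that deserves the most care is the double role played by the symbol $\gamma$: in the model (\ref{EeQqmodel}) the parameter $\gamma$ is the Lipschitz constant of $f$, whereas in the Lasota--Wazewska nonlinearity $\gamma$ also appears inside the exponent. The estimate above is exactly what reconciles these two roles, since it shows that the Lipschitz constant of $u\mapsto e^{-\gamma u}$ on $[0,\infty)$ is controlled by $\gamma$ and therefore tends to $0$ as $\gamma\to 0$. Consequently, the phrase ``$\gamma$ small enough'' in the corollary can be taken to coincide with the smallness threshold on the Lipschitz constant supplied by the preceding theorem, and the corollary follows. I expect the only genuine subtlety to be the restriction of the Lipschitz estimate to the nonnegative axis, since $e^{-\gamma u}$ is not globally Lipschitz on all of $\mathbb{R}$; this is harmless here because the object produced by the theorem is precisely the positive solution on which the estimate is valid.
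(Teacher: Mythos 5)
Your proposal is correct and is essentially the paper's own (implicit) argument: the paper gives no separate proof, presenting the corollary as the specialization $f(u)=e^{-\gamma u}$ of the preceding theorem on equation (\ref{EeQqmodel}), which is exactly what you do, including the key identification of the exponent $\gamma$ with the Lipschitz constant required by that theorem.

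The one point to repair is your closing remark, which is circular as phrased: you invoke the theorem's conclusion (the positive solution) to validate its Lipschitz hypothesis, whereas the hypothesis must be checked before the theorem can be applied. The non-circular justification is that the fixed-point operator underlying the theorem, $y\mapsto\int_{-\infty}^{t}e^{-\int_{s}^{t}\delta(u)\,du}\,p(s)f(y([s]))\,ds$, maps positive functions to positive functions (since $p>0$ and $f>0$), so the contraction estimate only ever evaluates $f$ on $[0,\infty)$; equivalently, one may apply the theorem to the globally $\gamma$-Lipschitz, positive function $u\mapsto e^{-\gamma\max(u,0)}$ and verify a posteriori from the integral representation that the unique bounded solution obtained is positive, hence solves (\ref{EeQqmodelx}) itself, with uniqueness following because any bounded solution of (\ref{EeQqmodelx}) is positive by the same representation and therefore solves the truncated equation as well.
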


\subsection{Continuous $\mathbb{Z}$-almost automorphic solution of $1D$ heat equation}

In this subsection, we present a regularity result for the $1D$ heat equation in the space $\Z AA(\R;\R)\cap C(\R;\R)$. Note that, $\Z AA(\R;\R)\cap C(\R;\R)$ is a Banach space under the supremum norm. Let us consider the $1D$ heat equation
\begin{eqnarray}\label{HEap}
u_t-\bigtriangleup u&=& 0 \,\, \,\, {\rm in} \,\,\,\, (0,+\infty)\times \mathbb{R} \\
u(x,0^+)&=&u_0(x)\,\, \,\, {\rm in} \,\,\,\, \mathbb{R}\, .\nonumber
\end{eqnarray}

\begin{thm}\label{lastthhh}
If $u_0 \in \Z AA(\R;\R)\cap C(\R;\R)$; then, the solution $u$ of the heat equation (\ref{HEap}) satisfies 
$u(t,\cdot) \in \Z AA(\R;\R)\cap C(\R;\R),\, \, \forall t >0$ .
\end{thm}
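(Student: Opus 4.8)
The plan is to use the explicit Gaussian representation of the solution of the $1D$ heat equation and then recognize the solution operator as a convolution to which Proposition \ref{THF1} applies. For $t>0$ the classical solution of (\ref{HEap}) is given by the Poisson/heat kernel:
$$u(t,x)=\int_{-\infty}^{\infty} G(t,x-y)\, u_0(y)\, dy\, ,\qquad G(t,z)=\frac{1}{\sqrt{4\pi t}}\, e^{-z^2/(4t)}\, .$$
The first step is to observe that for each fixed $t>0$ the kernel $z\mapsto G(t,z)$ belongs to $L^1(\R;\R)$ (indeed $\int_{\R}G(t,z)\,dz=1$), so that $u(t,\cdot)=G(t,\cdot)*u_0$ is exactly the convolution operator $L$ studied in Proposition \ref{THF1} with $\Phi=G(t,\cdot)$.

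Next I would invoke the hypothesis $u_0\in \Z AA(\R;\R)\cap C(\R;\R)$. Since $u_0\in \Z AA(\R;\R)$, Proposition \ref{THF1} immediately yields $u(t,\cdot)=G(t,\cdot)*u_0\in \Z AA(\R;\R)$ for every $t>0$, which already gives the $\Z$-almost automorphic part of the conclusion. It remains to show the continuity in the space variable, i.e.\ that $u(t,\cdot)\in C(\R;\R)$ for each $t>0$. This I would obtain from the smoothing property of the heat kernel: for fixed $t>0$, $G(t,\cdot)$ is smooth with all derivatives in $L^1(\R;\R)$, and since $u_0$ is bounded (by item $3)$ of Theorem \ref{finalthm}), differentiation under the integral sign shows $x\mapsto u(t,x)$ is $C^\infty$, in particular continuous. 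Thus $u(t,\cdot)\in \Z AA(\R;\R)\cap C(\R;\R)$ for all $t>0$, as claimed.

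The main obstacle I anticipate is a bookkeeping subtlety rather than a deep difficulty: Proposition \ref{THF1} is stated for convolution against the \emph{fixed} argument of $f$, so one must be careful that, for each fixed $t>0$, the operator $g\mapsto G(t,\cdot)*g$ acts on the \emph{spatial} variable of $u_0$, and that the $\Z$-almost automorphic structure of $u_0$ is precisely the structure preserved by that convolution. One should also confirm that the function produced by the convolution is the genuine classical solution of (\ref{HEap}) attaining the initial datum $u_0$ as $t\to 0^+$; this is standard heat-kernel theory given that $u_0$ is bounded and continuous, and guarantees that the object to which we apply Proposition \ref{THF1} is indeed the solution $u$. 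Once these identifications are in place, the result follows directly from the invariance property already established, with the only genuinely new input being the regularizing effect of $G(t,\cdot)$ that upgrades membership in $PC_{\Z}(\R;\R)$ to membership in $C(\R;\R)$.
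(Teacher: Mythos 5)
Your proposal is correct and follows essentially the same route as the paper: represent the solution by the heat kernel (the paper's $\Phi(t,\cdot)$, your $G(t,\cdot)$, an $L^1(\R;\R)$ kernel of integral one) and apply Proposition \ref{THF1} to obtain $u(t,\cdot)\in \Z AA(\R;\R)$ for every $t>0$. Your explicit smoothing argument for the continuity of $x\mapsto u(t,x)$ is a worthwhile detail that the paper's one-line proof leaves implicit, since Proposition \ref{THF1} by itself only gives membership in $\Z AA(\R;\R)\subset PC_{\Z}(\R;\R)$, not continuity.
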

\begin{proof}
It is well known that, the solution $u$ of the heat equation (\ref{HEap}) has the following integral representation 
\begin{equation}\label{Heat01}
u(t,x)=\int_{\mathbb{R}}\Phi(t,x-y)u_0(y)dy\, ,
\end{equation}
where $\Phi(t,x)$ is the fundamental solution with
\begin{equation}\label{HE01}
\int_{\mathbb{R}}\Phi(t,\xi)d\xi=1,\, \, \forall t >0\, .
\end{equation}
Now, using Proposition \ref{THF1} we conclude that $u(t,\cdot) \in \Z AA(\R;\R)\cap C(\R;\R),\, \, \forall t >0$ .
\end{proof}
Finally, we would like to comment that, as in \cite{chavez2021almost,ding2008asymptotically}, it is possible to study existence and uniqueness of the mild solution in the space $\Z AAA(\R^+;\mathbb{X})\cap C(\R^+;\mathbb{X})$ of the following integro-differential equation with nonlocal initial condition:
\begin{eqnarray}\label{EQ3}
  u'(t)&=&Au(t)+\int_0^t B(t-s)u(s)ds+f(t,u(t)),\ t\in \, \,  \R^+,\\
 u(0)&=&u_0+h(u),\label{EQ33}
\end{eqnarray}
where $u_0 \in \mathbb{X}$, $A$ and $B(t)_{t\geq 0}$ are dense and closed linear operators in $\mathbb{X}$; $A, B(\cdot), f, h$ satisfy some concrete conditions outlined, for instance, in \cite{chavez2021almost,ding2008asymptotically}. Of course, applications to the heat conduction in materials with memory can be improved.

\subsection{Open problem}
The Banach space $\Z AA(\R;\R)\cap C(\R;\R)$ was introduced in the previous subsection. Obviously, the inclusion 
$$AA(\R;\R) \subseteq  \Z AA(\R;\R)\cap C(\R;\R)\, ,$$ holds. 

The current problem is to identify a continuous $\mathbb{Z}$-almost automorphic function that does not fall under the category of classical almost automorphic functions. Therefore, the question arises: is the following strict inclusion 
$$AA(\R;\R) \varsubsetneq \Z AA(\R;\R)\cap C(\R;\R)\, ,$$
valid?.


\section*{Declarations of interest: none}

\section*{Acknowledgments}

The authors would like to express their gratitude to the anonymous reviewers for their careful reading and motivating suggestions, which have greatly contributed to the improvement of this work.

\section*{Funding}
Alan Ch\'avez was funded by contract 038-2021 Fondecyt-Per\'u and by CONCYTEC through the PROCIENCIA program under the E041-2023-01 competition, according to contract PE501082885-2023. Lenin Qui\~nones was funded by contract 038-2021 Fondecyt-Per\'u.












\end{document}